\tikzset{
    dot diameter/.store in=\dot@diameter,
    dot diameter=2pt,
    dot spacing/.store in=\dot@spacing,
    dot spacing=13pt,
    dots/.style={
        line width=\dot@diameter,
        line cap=round,
        dash pattern=on 0pt off \dot@spacing
    }
}
\def\BState{\State\hskip-\ALG@thistlm}
\numberwithin{equation}{section}
\newtheorem{theorem}{Theorem}[section]
\newtheorem{cor}[theorem]{Corollary}
\newtheorem{lem}[theorem]{Lemma}
\newtheorem{prop}[theorem]{Proposition}
\newtheorem{defn}[theorem]{Definition}
\newtheorem{rem}[theorem]{Remark}
\newtheorem{ex}[theorem]{Example}
\newtheorem{result}[theorem]{Result}
\newcommand{\R}{{\mathbb R}}
\newcommand{\ds}{\displaystyle}
\title{ Trees with Matrix Weights:\\Laplacian Matrix and  Characteristic-like Vertices}
\author{Swetha Ganesh\footnote{Department of Computer Science And Automation, IISc Bangalore, Bangalore, Karnataka-560012, India. \newline \indent Email: swethaganesh@iisc.ac.in}  \quad and \quad Sumit Mohanty\footnote{Humanities and Applied Sciences, IIM Ranchi,  Suchana Bhawan, Audrey House Campus, Meur's Road, Ranchi, Jharkhand-834008, India. \indent   Email:  sumitmath@gmail.com, sumit.mohanty@iimranchi.ac.in}}
\date{}
\begin{document}
%\linenumbers

\maketitle

\begin{abstract}
It is known that there is an alternative characterization of characteristic vertices for trees with positive weights on their edges via  Perron values and Perron branches. Moreover, the algebraic connectivity of a tree with positive edge weights can be expressed in terms of Perron value.  

In this article, we consider trees with matrix weights on their edges. More precisely, we are interested in trees with the following classes of matrix edge weights:
\begin{enumerate}
\item[$1.$] positive definite matrix weights, 

\item[$2.$] lower (or upper) triangular matrix weights with positive diagonal entries.
\end{enumerate}
For trees with the above classes of matrix edge weights, we define   Perron values and Perron branches. Further, we have shown the existence of vertices satisfying properties analogous to the properties of characteristic vertices of trees with positive edge weights in terms of Perron values and Perron branches, and we call such vertices characteristic-like vertices. In this case,  the eigenvalues of the Laplacian matrix are nonnegative, and we obtain a lower bound for the first non-zero eigenvalue of the Laplacian matrix in terms of Perron value. Furthermore, we also compute the Moore-Penrose inverse of the Laplacian matrix of a tree with nonsingular matrix weights on its edges. 
\end{abstract}

\noindent {\sc\textsl{Keywords}:} Tree, Laplacian Matrix, Characteristic vertices, Matrix weights, Perron values.

\noindent {\sc\textbf{MSC}:}  05C50, 05C22

\section{Introduction and  Motivation  }\label{sec:intro}
Let $G=(V, E)$ be a simple graph, with $V$ as the set of vertices and $E$ as the set of edges in $G$. For $u,v \in V,$  we write $u\sim v$ if $u$ and $v$ are adjacent in $G$, and $u\nsim v$  otherwise.  We write, $\deg(v)$ to denote the degree of  the vertex $v$ and  $\mathcal{P}(u,v)$ to denote the  path joining vertices   $u$  and $v$.

Given a graph $G=(V, E)$ on $n$ vertices,  if each edge $e \in E$ is associated with a positive number $W(e)$, called the weight of $e$, then the Laplacian matrix $L(G)=[l_{uv}]$ is an $n\times n$ matrix (we simply write $L$ if there is no scope for confusion), and is defined as follows:  for $u,v \in V$, if $u\neq v$,  then $l_{uv}$  is $0$ if $u\nsim v$, and $l_{uv}$  is $-W(e)$ if $u\sim v$ and $e$ is the edge between them; finally if $u=v$, $l_{vv}$ is the sum of the weights of the edges in $G$ which are incident with the vertex $v$. It is well known that $L(G)$ is a symmetric  positive semidefinite matrix. The column vector with  constant value for each of its entries (constant vector)   is an eigenvector of $L(G)$ corresponding to the smallest eigenvalue $0$. In~\cite{Fiedler1}, Fiedler proved that  the second smallest eigenvalue of $L(G)$, say $\mu(G)$, is positive if and only if $G$ is connected. Since $\mu(G)$  provides an algebraic measure of the connectivity of $G$, it is named as algebraic connectivity of $G$. An eigenvector $\mathbf{y}$ of $L(G)$, corresponding  to the algebraic connectivity $\mu(G)$ is called Fiedler vector. Further, for any vertex $v\in V$,  we write $\mathbf{y}_v$ to denote the $v^{th}$ entry of  $\mathbf{y}$.

In particular, for a given tree $T$ with positive weights on its edges, there is an interesting result that gives some insight into the structure of the eigenvectors corresponding to the algebraic connectivity of $T$. This result was first proved for trees, where all the edge weights are equal to $1$ in~\cite{Fiedler2}. However, it is also valid for trees with positive weights.
 
\begin{prop}~\cite{Fiedler2}
Let $T=(V, E)$ be a  tree with positive weights on its edges. Let $L$ be the  Laplacian matrix of $T$ with algebraic connectivity $\mu(T)$ and  $\mathbf{y}$ be an eigenvector of $L$ corresponding to the algebraic connectivity $\mu(T)$. Then, exactly one of the following cases occurs:
\begin{enumerate}
    \item[(a)] No entry of $\mathbf{y}$ is $0.$ In this case, there is a unique pair of vertices $u$ and $v$ such that $u$ and $v$ are adjacent in $T$, with  $\mathbf{y}_u>0$ and $\mathbf{y}_v<0$. Further, the entries of $\mathbf{y}$ are increasing along any path in $T$ which starts at $u$ and does not contain $v$, while the entries of $\mathbf{y}$ are decreasing along any path in $T$ which starts at $v$ and doesn't contain $u$.
    \item[(b)] Some entry of  $\mathbf{y}$ is $0.$ In this case, the subgraph of $\, T$ induced by the set of vertices corresponding to 0's in $\mathbf{y}$ is connected. Moreover, there is a unique vertex $x$ such that $\mathbf{y}_x=0,$ and $x$ is adjacent to a vertex $w$ with $\mathbf{y}_w\neq 0.$ The entries of $\mathbf{y}$ are either increasing, decreasing, or identically $0$ along any path in $T$ which starts at $x.$
\end{enumerate}
\end{prop}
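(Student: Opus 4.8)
The plan is to combine a single summation identity with the fact that $\mu(T)$ is the \emph{second} smallest eigenvalue. Writing $L\mathbf{y}=\mu(T)\mathbf{y}$ row by row gives, for each vertex $v$,
\[
\sum_{u\sim v} W(e_{uv})\,(\mathbf{y}_v-\mathbf{y}_u)=\mu(T)\,\mathbf{y}_v ,
\]
and since $\mathbf{y}$ is orthogonal to the constant eigenvector of $0$ we have $\sum_{v}\mathbf{y}_v=0$; as $\mathbf{y}\neq\mathbf 0$ this forces $\mathbf{y}$ to have entries of both signs. The first key step is a \emph{subtree--sum identity}: for an edge $e=(a,b)$, let $B$ be the component of $T-e$ containing $b$. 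Summing the eigen-equation over all $w\in B$, each edge internal to $B$ contributes two opposite terms that cancel, leaving only the boundary contribution of $e$, so that
\[
\mu(T)\sum_{w\in B}\mathbf{y}_w=W(e)\,(\mathbf{y}_b-\mathbf{y}_a).
\]
Because $W(e)>0$ and $\mu(T)>0$, this yields $\operatorname{sign}(\mathbf{y}_b-\mathbf{y}_a)=\operatorname{sign}\!\big(\sum_{w\in B}\mathbf{y}_w\big)$, which is the engine for both the monotonicity and the uniqueness assertions.

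Next I would show that monotonicity is a \emph{consequence} of the correct sign structure. Consider case (a), where no entry of $\mathbf{y}$ is $0$, and suppose it is already known that $P=\{v:\mathbf{y}_v>0\}$ and $N=\{v:\mathbf{y}_v<0\}$ each induce a connected subtree. Since $T$ is a tree and $\{P,N\}$ partitions $V$, exactly one edge $(u,v)$ joins $P$ to $N$, with $\mathbf{y}_u>0$, $\mathbf{y}_v<0$; this is the claimed unique adjacent pair. For any $w\in P\setminus\{u\}$ with neighbour $p$ on the path to $u$, the component $B$ of $T-(p,w)$ containing $w$ excludes $u$, hence contains no $P$--$N$ edge and lies entirely in $P$; thus $\sum_{B}\mathbf{y}>0$ and the identity gives $\mathbf{y}_w>\mathbf{y}_p$. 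Therefore the values strictly increase along every path leaving $u$ away from $v$, and the symmetric argument in $N$ gives strict decrease leaving $v$ away from $u$, which is exactly statement (a). In case (b) the same identity, applied with $B$ ranging over the subtrees hanging off the distinguished boundary vertex $x$, delivers the increasing/decreasing/identically-zero trichotomy, once one knows that each such branch is sign-pure.

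The main obstacle is precisely the structural input assumed above: that the positive support, the negative support, and (in case (b)) the zero set are each connected, equivalently that $\mathbf{y}$ undergoes exactly one sign change across the tree. This is where the hypothesis that $\mu(T)$ is the second smallest eigenvalue is indispensable, since higher eigenvectors exhibit several sign changes. I would establish it by a nodal-domain argument resting on the minimality of the Rayleigh quotient $R(\mathbf{x})=\big(\sum_{e=(i,j)}W(e)(\mathbf{x}_i-\mathbf{x}_j)^2\big)/(\mathbf{x}^{\top}\mathbf{x})$ over $\mathbf{1}^{\perp}$: if the positive support split into two parts sharing no edge, restricting $\mathbf{y}$ to each part yields a two-dimensional test space from which one extracts a vector in $\mathbf{1}^{\perp}$ whose Rayleigh quotient does not exceed $\mu(T)$, forcing a second, linearly independent eigenvector for $\mu(T)$ that vanishes on a proper set of vertices; an analysis of the eigen-equation at the boundary of this vanishing set, using irreducibility of the tree, gives a contradiction. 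The identical truncation/support analysis forces $Z=\{v:\mathbf{y}_v=0\}$ to be connected and pins down the unique $x\in Z$ adjacent to a nonzero vertex. Since the two cases are distinguished simply by whether some entry of $\mathbf{y}$ vanishes, they are mutually exclusive and exhaustive, completing the argument. (Alternatively, the single-sign-change fact may be imported directly from Fiedler's analysis of eigenvectors of acyclic matrices, which I regard as the cleanest route.)
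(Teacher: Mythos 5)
First, note that the paper does not prove this proposition at all: it is stated as background and attributed to Fiedler's 1975 paper \cite{Fiedler2}, so there is no in-paper argument to compare yours against; your closing remark that the cleanest route is to ``import it from Fiedler'' is in fact exactly what the authors do. Judged on its own terms, the part of your argument that you actually carry out is sound. The subtree-sum identity $\mu(T)\sum_{w\in B}\mathbf{y}_w = W(e)\,(\mathbf{y}_b-\mathbf{y}_a)$ is correct (contributions of edges internal to $B$ cancel in pairs, and $e$ is the unique boundary edge of $B$), and, \emph{granted} that the positive and negative supports induce connected subtrees, your deduction of the unique sign-change edge and of strict monotonicity along paths leaving $u$ and $v$ is complete and correct.

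The gap is that the granted hypothesis is the substantive content of the theorem, and you do not establish it. Your nodal-domain sketch for the connectivity of $P$ and $N$ is plausible in outline --- in case (a) it closes quickly, because every boundary neighbour of a positive component is strictly negative, so a suitable combination $\alpha\mathbf{z}_1+\beta\mathbf{z}_2\in\mathbf{1}^{\perp}$ of the truncations has Rayleigh quotient strictly below $\mu(T)$, a contradiction --- but in case (b) equality can occur, and the step ``an analysis of the eigen-equation at the boundary of this vanishing set \ldots gives a contradiction'' is precisely the delicate part of Fiedler's treatment of eigenvectors of acyclic matrices vanishing on cut sets; it is asserted, not proved. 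More seriously, the claim that ``the identical truncation/support analysis'' forces $Z$ to be connected and pins down the unique $x$ is not identical at all: the zero set requires a separate argument (for instance, the eigen-equation at a zero vertex with a nonzero neighbour forces that vertex to have neighbours of both signs, after which one must rule out a second such vertex and show that every branch at $x$ is sign-pure or identically zero --- something your subtree-sum identity can deliver, but only via an additional induction you have not written). As it stands, the proposal is a correct reduction of the theorem to its hardest ingredient, together with an honest admission that this ingredient would be imported from the very source the paper cites.
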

 
A  tree with positive weights on its edges is said to be of Type I if $(b)$ holds, and Type II if $(a)$ holds. If $T$ is of Type I, Fiedler defines the characteristic vertex as the special vertex $x$ referred to in $(b)$, whereas if $T$ is of Type II, he shows that $T$ has two characteristic vertices, namely the special vertices $u$ and $v$ referred to in $(a)$, and we call the edge between the vertices $u$ and $v$  is the characteristic edge of $T$. In~\cite{Merris},  it was shown that the characteristic vertex (or vertices) of $T$ is (are) independent of the choice of the eigenvector $\mathbf{y}$ corresponding to the algebraic connectivity $\mu(T)$. The above understanding of the characteristic vertices of trees and their relations with the algebraic connectivity has created a great deal of interest amongst researchers, and many interesting results have been obtained for trees (for example, see~\cite{Grone1,Grone2,Merris}).

Let $T$ be a tree with positive weights on its edges and let $\mathcal{C}_T$  denote the set of characteristic vertices of $T$. Then, $|\mathcal{C}_T|=1$ or $2$,  depending on whether $T$ contains a characteristic vertex or a characteristic edge, respectively.

Before proceeding further, we first introduce a few notations and then recall a few results from matrix theory which will be used time and again in this article.  Let $ \mathds{1}$, $I$, and $J$  denote the column vector of all ones, the identity matrix, and the matrix of all ones, respectively. We write $\mathbf{0}_{m \times n}$ to represent the zero matrix of order $m \times n$ and simply write $\mathbf{0}$ if there is no scope for confusion with the order of the matrix. Given a matrix $A$, we use $A^T$, Range($A$) and Null($A$) to denote the transpose, range and null space of the matrix $A$, respectively.  If $A$ is a square matrix, then the set of eigenvalues of $A$ is called the spectrum of $A$,  denoted by $\sigma(A)$ and the spectral radius of $A$, denoted by $\rho(A)$ is defined as  $\ds \rho(A)= \max_{\lambda \in \sigma(A) } |\lambda|.$ By Perron-Frobenius theory, if $A$ is an entrywise positive square matrix, then  $\rho(A)$ is the largest eigenvalue of $A$. Moreover, $\rho(A)$ is a simple eigenvalue of $A$ and is called the Perron value of $A$. Next, we state a result that compares the spectral radius of two nonnegative matrices, which is an application of Perron-Frobenius theory (for details see~\cite{Minc}).

\begin{theorem}~\cite[Corollary $2.2$]{Minc}\label{thm:sp-comp}
Let $A$ be an entrywise positive square matrix and $B$ be a nonnegative matrix  of the same order as $A$. If $A-B$ is a nonnegative matrix with at least one positive entry, then $\rho(A)> \rho(B)$.
\end{theorem}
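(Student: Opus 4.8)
The plan is to prove the comparison through the Perron--Frobenius theory, reducing everything to a single bilinear form built from the Perron eigenvectors of $A$ and $B$. First I would record the two eigenvector facts supplied by the hypotheses. Since $A$ is entrywise positive, Perron--Frobenius theory gives that $\rho(A)$ is a positive simple eigenvalue of $A$; applying the same theory to $A^T$ (also entrywise positive) yields a strictly positive left eigenvector $w$ with $w^T A = \rho(A)\, w^T$ and $w > \mathbf{0}$. Since $B$ is nonnegative, Perron--Frobenius theory for nonnegative matrices furnishes a right eigenvector $v \geq \mathbf{0}$, $v \neq \mathbf{0}$, with $B v = \rho(B)\, v$.

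Next I would exploit the splitting $A = B + (A-B)$ by evaluating the scalar $w^T A v$ in two ways. On one hand $w^T A v = \rho(A)\,(w^T v)$; on the other hand $w^T A v = w^T B v + w^T (A-B) v = \rho(B)\,(w^T v) + w^T (A-B) v$. Subtracting gives the key identity $(\rho(A) - \rho(B))\,(w^T v) = w^T (A-B) v$. Because $w > \mathbf{0}$ and $v \geq \mathbf{0}$ is nonzero, we have $w^T v > 0$; and because $A - B \geq \mathbf{0}$ with $w, v \geq \mathbf{0}$, the right-hand side is nonnegative. This already delivers the weak inequality $\rho(A) \geq \rho(B)$, so everything reduces to ruling out equality.

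The main obstacle is exactly this strictness: I must upgrade $w^T (A-B) v \geq 0$ to $w^T (A-B) v > 0$, and the subtlety is that when $B$ is reducible its Perron vector $v$ may have zero coordinates, so the lone positive entry of $A - B$ need not, a priori, meet a positive coordinate of $v$. To close the gap I would argue by contradiction. Suppose $w^T (A-B) v = 0$. Since $w$ is strictly positive and $(A-B) v \geq \mathbf{0}$, this forces $(A-B) v = \mathbf{0}$, hence $A v = B v = \rho(B)\, v$. But $A$ is entrywise positive and $v$ is nonnegative and nonzero, so $A v > \mathbf{0}$ strictly; thus $\rho(B)\, v = A v > \mathbf{0}$, which forces $\rho(B) > 0$ and $v > \mathbf{0}$. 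Now $(A-B) v = \mathbf{0}$ together with $A - B \geq \mathbf{0}$ and $v > \mathbf{0}$ forces $A - B = \mathbf{0}$, contradicting the hypothesis that $A - B$ has a positive entry. Therefore $w^T (A-B) v > 0$, and the key identity yields $\rho(A) > \rho(B)$, as claimed.
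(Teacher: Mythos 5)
The paper does not prove this statement at all: it is quoted verbatim as Corollary $2.2$ of Minc's book and used as a black box, so there is no internal proof to compare yours against. Your argument is correct and complete as a self-contained proof. The identity $(\rho(A)-\rho(B))\,w^Tv = w^T(A-B)v$ obtained from the left Perron vector $w>\mathbf{0}$ of $A$ and the right Perron vector $v\ge\mathbf{0}$ of $B$ immediately gives $\rho(A)\ge\rho(B)$, and your strictness argument is where the real content lies: you correctly identify that the danger case is a reducible $B$ whose Perron vector has zero coordinates, and your contradiction chain ($w^T(A-B)v=0 \Rightarrow (A-B)v=\mathbf{0} \Rightarrow Av=\rho(B)v>\mathbf{0} \Rightarrow v>\mathbf{0} \Rightarrow A=B$) closes that gap cleanly, since each implication uses exactly the positivity available ($w>\mathbf{0}$, then $A>\mathbf{0}$ with $v\ge\mathbf{0}$ nonzero, then $A-B\ge\mathbf{0}$ against a strictly positive $v$). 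This is essentially the standard left-eigenvector duality proof of the strict monotonicity of the spectral radius, and it is a perfectly acceptable substitute for the citation.
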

Finally, given a real symmetric matrix $A$ of order $n \times n$, we use the following convention where the eigenvalues of A are in increasing order:
\begin{equation}\label{eqn:ev-hermitian}
\lambda_{\min}=\lambda_1(A) \leq \lambda_2(A)\leq \cdots \leq \lambda_{n-1}(A)\leq  \lambda_{n}(A)=\lambda_{\max}.
\end{equation}
We now state a few results from matrix theory  which are useful for subsequent results.
 
\begin{theorem}[Min-max Theorem]\cite{Horn}
Let $A$ be a  real symmetric matrix of order $n \times n$, and let the eigenvalues of $A$ be ordered as in Equation~\eqref{eqn:ev-hermitian}. Then 
$$\lambda_{\max}= \lambda_n(A)= \max_{\mathbf{x}^T \mathbf{x}=1} \mathbf{x}^T A \mathbf{x} = \max_{\mathbf{x}\neq \mathbf{0}} \frac{\mathbf{x}^T A \mathbf{x}}{\mathbf{x}^T \mathbf{x}} \quad  \mbox{  and  } \quad   \lambda_{\min}= \lambda_1(A)= \min_{\mathbf{x}^T \mathbf{x}=1} \mathbf{x}^T A \mathbf{x} = \min_{\mathbf{x}\neq \mathbf{0}} \frac{\mathbf{x}^T A \mathbf{x}}{\mathbf{x}^T \mathbf{x}}.$$
\end{theorem}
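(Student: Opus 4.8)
The plan is to invoke the spectral theorem for real symmetric matrices and reduce the whole statement to a diagonal computation. Since $A$ is real symmetric, it admits an orthogonal diagonalization $A = Q\Lambda Q^T$, where $Q$ is an orthogonal matrix whose columns $q_1,\ldots,q_n$ form an orthonormal basis of eigenvectors and $\Lambda = \mathrm{diag}\big(\lambda_1(A),\ldots,\lambda_n(A)\big)$ with the ordering of Equation~\eqref{eqn:ev-hermitian}. I would first establish the two extremal identities in the constrained form (optimizing over unit vectors) and then deduce the Rayleigh-quotient form by a scaling argument.

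For the constrained form, fix a unit vector $\mathbf{x}$ and set $\mathbf{z} = Q^T\mathbf{x}$. Because $Q$ is orthogonal, $\mathbf{z}^T\mathbf{z} = \mathbf{x}^T Q Q^T \mathbf{x} = \mathbf{x}^T\mathbf{x} = 1$, and substituting yields
$$\mathbf{x}^T A \mathbf{x} = \mathbf{z}^T \Lambda \mathbf{z} = \sum_{i=1}^n \lambda_i(A)\, z_i^2.$$
Since $\sum_{i=1}^n z_i^2 = 1$ and $\lambda_1(A) \leq \lambda_i(A) \leq \lambda_n(A)$ for every $i$, the right-hand side is a convex combination of the eigenvalues and is therefore squeezed between $\lambda_1(A)$ and $\lambda_n(A)$; that is, $\lambda_1(A) \leq \mathbf{x}^T A \mathbf{x} \leq \lambda_n(A)$ for every unit vector $\mathbf{x}$. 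Both bounds are attained: choosing $\mathbf{x} = q_n$ gives $\mathbf{x}^T A \mathbf{x} = \lambda_n(A)$, and $\mathbf{x} = q_1$ gives $\lambda_1(A)$. Hence the maximum and minimum of $\mathbf{x}^T A \mathbf{x}$ over the unit sphere equal $\lambda_n(A) = \lambda_{\max}$ and $\lambda_1(A) = \lambda_{\min}$, respectively; the extrema exist since $\mathbf{x}\mapsto \mathbf{x}^T A \mathbf{x}$ is continuous on the compact unit sphere.

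To pass to the Rayleigh-quotient form, I would note that for any nonzero $\mathbf{x}$ the quotient $\mathbf{x}^T A \mathbf{x}/\mathbf{x}^T\mathbf{x}$ is invariant under rescaling $\mathbf{x}\mapsto c\mathbf{x}$ with $c\neq 0$. Normalizing to $\hat{\mathbf{x}} = \mathbf{x}/\sqrt{\mathbf{x}^T\mathbf{x}}$ produces a unit vector with $\mathbf{x}^T A \mathbf{x}/\mathbf{x}^T\mathbf{x} = \hat{\mathbf{x}}^T A \hat{\mathbf{x}}$, so optimizing the Rayleigh quotient over all nonzero vectors is equivalent to optimizing $\mathbf{x}^T A \mathbf{x}$ over unit vectors. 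This identifies both optima with $\lambda_{\max}$ and $\lambda_{\min}$, as claimed. There is no genuine obstacle in this argument; the only step requiring care is the honest appeal to the spectral theorem to secure an orthonormal eigenbasis, which is precisely what guarantees that the change of variables $\mathbf{x} = Q\mathbf{z}$ preserves the Euclidean norm and reduces the quadratic form to a convex combination of eigenvalues.
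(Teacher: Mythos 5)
Your argument is correct and complete: the orthogonal diagonalization $A=Q\Lambda Q^T$, the reduction of $\mathbf{x}^TA\mathbf{x}$ to a convex combination $\sum_i\lambda_i(A)z_i^2$ over the unit sphere, attainment at the eigenvectors $q_1$ and $q_n$, and the scaling argument linking the constrained form to the Rayleigh quotient are exactly the standard textbook proof. The paper itself gives no proof of this statement --- it is quoted directly from Horn and Johnson --- so there is nothing to compare against beyond noting that your spectral-theorem argument is precisely the one found in that reference.
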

\begin{theorem}[Inclusion Principle]\cite{Horn}\label{thm:inclu}
Let $A$ be an $n\times n$ real symmetric matrix, let $r$ be an integer with $1\leq r\leq n$, and let $A_r$ denote any $r \times r$ principal submatrix of $A$. For each integer $k$ such that $1\leq k\leq r$, we have
$$\lambda_k(A)\leq \lambda_k(A_r)\leq \lambda_{k+n-r}(A).$$
\end{theorem}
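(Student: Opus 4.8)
The plan is to realize the principal submatrix $A_r$ as a compression of $A$ and then invoke the subspace (Courant--Fischer) form of the Min-max Theorem. Concretely, let $S\subseteq\{1,\dots,n\}$ be the index set of size $r$ that determines $A_r$, and let $P$ be the $n\times r$ matrix whose columns are the standard basis vectors $\mathbf{e}_i$, $i\in S$. Then $P^TP=I_r$ and $A_r=P^TAP$. The crucial observation is that for every $\mathbf{z}\in\R^r$ one has $\mathbf{z}^TA_r\mathbf{z}=(P\mathbf{z})^TA(P\mathbf{z})$ and $\mathbf{z}^T\mathbf{z}=(P\mathbf{z})^T(P\mathbf{z})$, so the Rayleigh quotient is preserved under the injective map $\mathbf{z}\mapsto P\mathbf{z}$, which sends each $k$-dimensional subspace $V\subseteq\R^r$ to a $k$-dimensional subspace $P(V)\subseteq\R^n$.

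For the lower bound I would use the min-max form $\lambda_k(A_r)=\min_{\dim V=k}\max_{\mathbf 0\neq \mathbf{z}\in V}\frac{\mathbf{z}^TA_r\mathbf{z}}{\mathbf{z}^T\mathbf{z}}$. Fix a $k$-dimensional $V\subseteq\R^r$ attaining the minimum and set $W=P(V)$, a $k$-dimensional subspace of $\R^n$. The Rayleigh-quotient identity gives $\max_{\mathbf 0\neq\mathbf z\in V}\frac{\mathbf z^TA_r\mathbf z}{\mathbf z^T\mathbf z}=\max_{\mathbf 0\neq \mathbf w\in W}\frac{\mathbf w^TA\mathbf w}{\mathbf w^T\mathbf w}\ge\lambda_k(A)$, the last step because $\lambda_k(A)$ is the minimum of this quantity over \emph{all} $k$-dimensional subspaces of $\R^n$. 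Hence $\lambda_k(A_r)\ge\lambda_k(A)$.

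For the upper bound I would switch to the dual max-min form $\lambda_k(A_r)=\max_{\dim V=r-k+1}\min_{\mathbf 0\neq\mathbf z\in V}\frac{\mathbf z^TA_r\mathbf z}{\mathbf z^T\mathbf z}$. For a maximizing subspace $V\subseteq\R^r$ of dimension $r-k+1$, again put $W=P(V)$, of dimension $r-k+1$ in $\R^n$. The same identity yields $\min_{\mathbf 0\neq\mathbf z\in V}\frac{\mathbf z^TA_r\mathbf z}{\mathbf z^T\mathbf z}=\min_{\mathbf 0\neq\mathbf w\in W}\frac{\mathbf w^TA\mathbf w}{\mathbf w^T\mathbf w}\le\lambda_{k+n-r}(A)$, where the inequality is the max-min characterization of $\lambda_{k+n-r}(A)$ over subspaces of dimension $n-(k+n-r)+1=r-k+1$, of which $W$ is one. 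This gives $\lambda_k(A_r)\le\lambda_{k+n-r}(A)$, completing both inequalities.

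The main obstacle is bookkeeping rather than conceptual: one must match the subspace dimensions so that the image $P(V)$ lands in exactly the family of subspaces characterizing $\lambda_{k+n-r}(A)$, which is precisely why the shift by $n-r$ appears. A secondary point is that the Min-max Theorem is stated in the excerpt only for $\lambda_{\min}$ and $\lambda_{\max}$; I would first record its standard subspace extension (Courant--Fischer), or, if one prefers to avoid it, establish instead the one-step case $r=n-1$ (Cauchy interlacing, $\lambda_k(A)\le\lambda_k(A_{n-1})\le\lambda_{k+1}(A)$) and iterate it $n-r$ times, each deletion of a matching row and column shifting the relevant index by at most one.
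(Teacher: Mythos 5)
The paper does not actually prove this statement---it is quoted from Horn and Johnson as a known classical fact---so there is no internal proof to compare against. Your argument is the standard Courant--Fischer compression proof (writing $A_r=P^TAP$ with $P^TP=I_r$ and pushing extremal subspaces forward under the injective map $\mathbf{z}\mapsto P\mathbf{z}$), it is correct as written, including the dimension count $n-(k+n-r)+1=r-k+1$ that produces the index shift in the upper bound, and your noted fallback of iterating the one-step Cauchy interlacing $n-r$ times is an equally valid route.
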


\begin{theorem}~\cite[Theorem~$4.3.7$, Page $184$]{Horn}\label{thm:weyl}
Let $A$ and $B$ be real symmetric matrices of order $n \times n$ with eigenvalues ordered as in Equation~\eqref{eqn:ev-hermitian}. Then for every pair of integers $j,k$ such that $1\leq j,k\leq n$ and $j+k \geq n+1$, we have 
$$\lambda_{j+k-n}(A+B)\leq \lambda_j(A)+\lambda_k(B).$$
\end{theorem}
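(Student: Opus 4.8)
The plan is to establish this Weyl-type inequality through the subspace (Courant--Fischer) form of the min-max principle, combined with a dimension count on intersecting eigenspaces. The Min-max Theorem recorded above supplies only the two extreme eigenvalues, so first I would record its standard refinement: for a real symmetric matrix $M$ of order $n$ and each index $i$,
\[
\lambda_i(M)=\min_{\substack{S\subseteq \R^n\\ \dim S=i}}\ \max_{\substack{\mathbf{x}\in S,\ \mathbf{x}\neq \mathbf{0}}}\ \frac{\mathbf{x}^T M\mathbf{x}}{\mathbf{x}^T\mathbf{x}}.
\]

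Setting $m=j+k-n$, the hypothesis $j+k\ge n+1$ guarantees $1\le m\le n$, so that $\lambda_m(A+B)$ is a legitimate eigenvalue index. Next I would fix orthonormal eigenbases $\{\mathbf{u}_1,\dots,\mathbf{u}_n\}$ of $A$ and $\{\mathbf{v}_1,\dots,\mathbf{v}_n\}$ of $B$ with $A\mathbf{u}_i=\lambda_i(A)\mathbf{u}_i$ and $B\mathbf{v}_i=\lambda_i(B)\mathbf{v}_i$, and set $U=\operatorname{span}\{\mathbf{u}_1,\dots,\mathbf{u}_j\}$ and $V=\operatorname{span}\{\mathbf{v}_1,\dots,\mathbf{v}_k\}$. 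Because the eigenvalues are listed in increasing order, expanding $\mathbf{x}=\sum_{i=1}^{j}c_i\mathbf{u}_i\in U$ gives $\mathbf{x}^TA\mathbf{x}=\sum_{i=1}^{j}c_i^2\lambda_i(A)\le \lambda_j(A)\,\mathbf{x}^T\mathbf{x}$, and symmetrically $\mathbf{x}^TB\mathbf{x}\le \lambda_k(B)\,\mathbf{x}^T\mathbf{x}$ for every $\mathbf{x}\in V$.

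The crux is the dimension count $\dim(U\cap V)\ge \dim U+\dim V-n=j+k-n=m$, which uses the hypothesis $j+k\ge n+1$ precisely to force a subspace of dimension at least $m$. On $U\cap V$ both Rayleigh bounds hold at once, so every nonzero $\mathbf{x}\in U\cap V$ satisfies
\[
\frac{\mathbf{x}^T(A+B)\mathbf{x}}{\mathbf{x}^T\mathbf{x}}\le \lambda_j(A)+\lambda_k(B).
\]
Choosing any $m$-dimensional subspace $S\subseteq U\cap V$ as a competitor in the min-max formula for $\lambda_m(A+B)$ then yields $\lambda_m(A+B)\le \max_{\mathbf{0}\neq\mathbf{x}\in S}\mathbf{x}^T(A+B)\mathbf{x}/(\mathbf{x}^T\mathbf{x})\le \lambda_j(A)+\lambda_k(B)$, which is the assertion. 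I expect the only real obstacle to be the bookkeeping: one must pair the increasing-order convention with the correct (min-max, not max-min) form of the variational principle and check the index arithmetic, after which the argument is routine.
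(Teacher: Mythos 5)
Your proof is correct. The paper does not prove this statement at all --- it is quoted directly from Horn and Johnson as Theorem~4.3.7 --- and your argument (Courant--Fischer in the form $\lambda_i(M)=\min_{\dim S=i}\max_{\mathbf{0}\neq\mathbf{x}\in S}\mathbf{x}^TM\mathbf{x}/\mathbf{x}^T\mathbf{x}$, the Rayleigh bounds on $U$ and $V$, and the dimension count $\dim(U\cap V)\ge j+k-n$) is precisely the standard proof given in that reference, with the index arithmetic handled correctly for the increasing-order convention.
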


Let $T$ be a tree with positive weights on its edges. A  branch at a vertex $v$ of  $T$ is one of the connected components  obtained from $T$ by deleting $v$ and all edges incident with $v$. Let $L_v$ be the principal submatrix of the Laplacian matrix $L$ obtained by deleting the row and column corresponding to  the vertex $v$. It is easy to see that $L_v$ is a block diagonal invertible matrix. Hence $M_v=L_v^{-1}$ is a  block diagonal matrix and each of its  diagonal blocks corresponds to a branch at $v$, called  the bottleneck matrix  for that branch at $v$.  To be precise, for a branch $B$ at $v$ consisting  of $k$ vertices, the bottleneck matrix for $B$ based at $v$,  denoted by $M_v(B)$ is a $k \times k$ matrix such that for $x,y \in B$, the entry at the $(x,y)^{th}$ position of $M_v(B)$ is given by 
$$\ds \sum_{e \in \mathcal{P}(x,v)\, \cap \, \mathcal{P}(y,v)} \frac{1}{W(e)}.$$   If $u$ and $v$  are distinct vertices of a weighted  tree, we use $B_u(v)$ to denote the branch at the vertex $u$ which contains the vertex $v$. For notational convenience, we  write $M_v(u)$ instead of $M_v(B_v(u))$. Note that, the bottleneck matrix for a branch $B$ at $v$ is a square entrywise positive matrix, and the Perron value  of that bottleneck matrix $M_v(B)$ is $\rho(M_v(B))$,  called the Perron value of $B$. Finally, a branch $B$ at $v$ is called a Perron branch, if the Perron value of $B$ is the largest amongst all the branches at $v$ and hence  $\rho(M_v)=\rho(M_v(B)).$ 

The above notations and observations were first provided in~\cite{Kirkland, Kirkland1}. Let $T=(V,E)$ be tree.  For  any $u,v,w \in V$, we write $B_u(w)\subset B_v(w)$ if $B_u(w)$ contained in $B_v(w)$, and $B_u(w)\subsetneq B_v(w)$ if the containment is proper. The following result is a consequence of Theorems~\ref{thm:sp-comp} and \ref{thm:inclu}, and we state the result using the above notations.

\begin{prop}\label{prop:big-branch-scalar}
Let $T=(V,E)$ be a tree  with   positive weights on its edges. For  any $u,v,w \in V$, if $B_u(w)\subsetneq B_v(w)$, then $\rho(M_u(w))< \rho(M_v(w)).$
\end{prop}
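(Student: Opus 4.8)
The plan is to sandwich $\rho(M_u(w))$ between $\rho(M_v(w))$ and the Perron value of an intermediate matrix, applying the two cited matrix results in sequence. First I would record the relevant tree geometry. Since $B_v(w)$ is a component of $T-v$, the base vertex $v$ does not lie in $B_v(w)$; as $B_u(w)\subsetneq B_v(w)$, this forces $v\notin B_u(w)$. Consequently, for every $x\in B_u(w)$ the unique path $\mathcal{P}(x,v)$ must pass through $u$ and splits edge-disjointly as $\mathcal{P}(x,u)\cup\mathcal{P}(u,v)$, where the edges of $\mathcal{P}(u,v)$ all lie outside the branch $B_u(w)$ (the first edge leaves $u$ in the direction of $v$, away from the branch).

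The key computational step is to compare $M_u(w)$ with the principal submatrix $N$ of $M_v(w)$ indexed by the vertices of $B_u(w)$. Using the path decomposition above, for $x,y\in B_u(w)$ the common edge set satisfies $\mathcal{P}(x,v)\cap\mathcal{P}(y,v)=\big(\mathcal{P}(x,u)\cap\mathcal{P}(y,u)\big)\cup\mathcal{P}(u,v)$, and the two pieces on the right are edge-disjoint. Summing reciprocal weights then yields $N=M_u(w)+c\,J$, where $c=\sum_{e\in\mathcal{P}(u,v)}\frac{1}{W(e)}$ and $J$ is the all-ones matrix of the appropriate order. Because $u\neq v$ (otherwise the containment could not be proper), the path $\mathcal{P}(u,v)$ contains at least one edge, so $c>0$ and $c\,J$ is a nonnegative matrix with all entries positive. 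I expect this edge-set bookkeeping to be the main obstacle, since one must argue carefully that $\mathcal{P}(u,v)$ contributes the same positive constant to every entry of $N$ while contributing nothing to $M_u(w)$.

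With the identity $N=M_u(w)+c\,J$ established, the rest is immediate. Applying Theorem~\ref{thm:sp-comp} with the entrywise positive matrix $N$ and the nonnegative matrix $M_u(w)$, whose difference $c\,J$ is nonnegative with a positive entry, gives $\rho(M_u(w))<\rho(N)$. Separately, $N$ is a principal submatrix of the entrywise positive matrix $M_v(w)$, so the Inclusion Principle (Theorem~\ref{thm:inclu}) applied to the largest eigenvalues, together with the Perron--Frobenius identity $\rho=\lambda_{\max}$ for nonnegative matrices, gives $\rho(N)\le\rho(M_v(w))$. Chaining the two inequalities yields $\rho(M_u(w))<\rho(M_v(w))$, as claimed.
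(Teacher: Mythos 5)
Your proof is correct and follows exactly the route the paper intends: the paper states this proposition without proof, noting only that it is a consequence of Theorems~\ref{thm:sp-comp} and~\ref{thm:inclu}, and your argument supplies precisely those two steps, with the key identity $N=M_u(w)+c\,J$ being the scalar version of the block decomposition the paper itself uses later in Lemma~\ref{lem:big-branch-sr}. The edge-set bookkeeping ($v\notin B_u(w)$, hence $\mathcal{P}(x,v)=\mathcal{P}(x,u)\cup\mathcal{P}(u,v)$ edge-disjointly for $x\in B_u(w)$) is handled correctly.
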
 

 We now recall an alternative characterization of the characteristic vertex and characteristic edge for trees with positive edge weights in terms of Perron branches and bottleneck matrices. As a consequence, a relation was found between the Perron values with the algebraic connectivity. The following results summarize these characterizations and some of their consequences (for details see~\cite{Kirkland}, \cite[Chapter $6$]{Molitierno}).

\begin{prop}\label{prop:ch-edge}
Let $T$ be a  tree with positive weights on its edges.  Then, the following statements are equivalent.
\begin{enumerate}
\item[$1.$] $T$ is Type II with the characteristic edge $e$ between the vertices $u$ and $v$.

\item[$2.$] There exists $0<\gamma<1$ such that  $\rho (M_u(v)-\gamma (1/\theta)J)=\rho (M_v(u)-(1-\gamma)(1/\theta)J)$, where $\theta$ is the weight of the edge $e$ between the vertices $u$ and $v$. Moreover,  $$\dfrac{1}{\mu(T)}=\rho (M_u(v)-\gamma (1/\theta)J)=\rho (M_v(u)-(1-\gamma)(1/\theta)J),$$ where $\mu(T)$ is the algebraic connectivity of $T$.

\item[$3.$]  For adjacent vertices $u$ and $v$,  $B_u(v)$ is the unique Perron branch  at $u$, while $B_v(u)$ is the unique Perron branch at $v$  in $T$.
\end{enumerate}
\end{prop}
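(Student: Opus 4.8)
The plan is to prove the cycle $(1)\Rightarrow(2)\Rightarrow(3)\Rightarrow(1)$, with a single bottleneck identity carrying most of the weight. Write $\theta$ for the weight of $e=\{u,v\}$, let $U=B_v(u)$ be the component of $T-v$ containing $u$ and $D=B_u(v)$ the component of $T-u$ containing $v$, so that $V$ is the disjoint union of $U$ and $D$ and $e$ is the only edge joining them. The key observation is a decomposition of the two bottleneck matrices along $e$: since every path from a vertex of $D$ to $u$ traverses $e$, the defining formula gives $M_u(v)=N_D+(1/\theta)J$, where $(N_D)_{x,y}=\sum_{g\in\mathcal{P}(x,v)\cap\mathcal{P}(y,v)}1/W(g)$ has a zero row and column at $v$; symmetrically $M_v(u)=N_U+(1/\theta)J$. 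After deleting the zero row and column, $N_U$ (resp.\ $N_D$) is block diagonal with one block $M_u(B)$ for each branch $B\neq B_u(v)$ at $u$ (resp.\ $M_v(B)$ for each $B\neq B_v(u)$ at $v$), so that $\rho(N_U)$ and $\rho(N_D)$ are exactly the largest Perron values among the branches at $u$ other than $B_u(v)$ and at $v$ other than $B_v(u)$.

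For $(1)\Rightarrow(2)$ I would split a Fiedler vector $\mathbf{y}$, normalised by $\mathds{1}^T\mathbf{y}=0$, across $e$. Reading the eigen-equation $L\mathbf{y}=\mu\mathbf{y}$ on the vertices of $U$ and using that the column of $M_v(u)$ indexed by $u$ equals $(1/\theta)\mathds{1}$, one gets $\mathbf{y}|_U=\mu M_v(u)\,\mathbf{y}|_U+\mathbf{y}_v\mathds{1}$, and symmetrically on $D$. In Type II one has $\mathbf{y}|_U>0$, $\mathbf{y}_v<0$ (resp.\ $\mathbf{y}|_D<0$, $\mathbf{y}_u>0$), so replacing $\mathds{1}$ by $\tfrac{1}{\mathds{1}^T\mathbf{y}|_U}J\,\mathbf{y}|_U$ rewrites these as $(M_v(u)-\alpha J)\,\mathbf{y}|_U=\tfrac1\mu\mathbf{y}|_U$ and $(M_u(v)-\beta J)(-\mathbf{y}|_D)=\tfrac1\mu(-\mathbf{y}|_D)$ with $\alpha,\beta>0$. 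The row-sum identity $\mathds{1}_U^TL=\theta(\mathbf{e}_u-\mathbf{e}_v)^T$, with $\mathbf{e}_u,\mathbf{e}_v$ the coordinate vectors, gives $\mu\,\mathds{1}^T\mathbf{y}|_U=\theta(\mathbf{y}_u-\mathbf{y}_v)$, and combined with $\mathds{1}^T\mathbf{y}|_U=\mathds{1}^T(-\mathbf{y}|_D)$ this forces $\alpha+\beta=1/\theta$; setting $\gamma=\beta\theta\in(0,1)$ identifies the two matrices with those in (2). Finally both matrices are entrywise nonnegative with a strictly positive eigenvector, so by the standard fact that a nonnegative matrix with a positive eigenvector attains its spectral radius there, the common eigenvalue $1/\mu$ is their Perron value, which is (2).

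For $(2)\Rightarrow(3)$ I would feed the decomposition into Theorem~\ref{thm:sp-comp}. Since $0<\gamma<1$, subtracting the positive matrix $\gamma(1/\theta)J$ strictly decreases the spectral radius, so $\rho(M_u(v))>\rho\big(M_u(v)-\gamma(1/\theta)J\big)=1/\mu$; and since $\gamma>0$, $\rho(N_U)=\rho\big(M_v(u)-(1/\theta)J\big)<\rho\big(M_v(u)-(1-\gamma)(1/\theta)J\big)=1/\mu$. Hence at $u$ we have $\rho(M_u(v))>1/\mu>\rho(N_U)=\max_{B\neq B_u(v)}\rho(M_u(B))$, so $B_u(v)$ is the unique Perron branch at $u$; the mirror computation at $v$ gives $B_v(u)$ as the unique Perron branch there, which is (3). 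Running this in reverse supplies the easy half of $(3)\Rightarrow(1)$: putting $f(\gamma)=\rho\big(M_u(v)-\gamma(1/\theta)J\big)$ and $g(\gamma)=\rho\big(M_v(u)-(1-\gamma)(1/\theta)J\big)$ on $[0,1]$, Theorem~\ref{thm:sp-comp} makes $f$ strictly decreasing and $g$ strictly increasing, while (3) gives $f(0)=\rho(M_u(v))>\rho(N_U)=g(0)$ and $f(1)=\rho(N_D)<\rho(M_v(u))=g(1)$, so by continuity there is a unique $\gamma^*\in(0,1)$ with $f(\gamma^*)=g(\gamma^*)=:1/\nu$.

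To finish $(3)\Rightarrow(1)$ I take the positive Perron vectors $\mathbf{p}$ on $U$ and $\mathbf{q}$ on $D$ of the two (now entrywise positive) matrices, scale them so that $\mathds{1}^T\mathbf{p}=\mathds{1}^T\mathbf{q}$, and set $\mathbf{y}|_U=\mathbf{p}$, $\mathbf{y}|_D=-\mathbf{q}$. The $u$- and $v$-components of the two eigen-relations give automatically $\mathbf{p}_u/\mathds{1}^T\mathbf{p}=\gamma^*\nu/\theta$ and $\mathbf{q}_v/\mathds{1}^T\mathbf{q}=(1-\gamma^*)\nu/\theta$, which are precisely the compatibility conditions making $L\mathbf{y}=\nu\mathbf{y}$ hold at $u$ and $v$ as well; with $\mathds{1}^T\mathbf{y}=0$ this exhibits $\nu$ as a Laplacian eigenvalue whose eigenvector is strictly positive on $U$ and strictly negative on $D$, and the Min-max Theorem gives $\mu(T)\leq\nu$. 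The hard part is the reverse inequality $\nu\leq\mu(T)$, i.e.\ certifying that the eigenvalue we manufactured really is the algebraic connectivity; once it is, $\mathbf{y}$ is a nowhere-zero Fiedler vector whose single sign change lies across $e$, so Fiedler's structure theorem places $T$ in case (a) with characteristic edge $e$. To close this gap I would invoke the Fiedler dichotomy: a genuine Fiedler vector makes $T$ either Type II at some edge $e'$ or Type I at some vertex $x$. The Type II case, through the already-proved $(1)\Rightarrow(2)\Rightarrow(3)$, forces $e'$ to carry the unique Perron branch on both sides, and Proposition~\ref{prop:big-branch-scalar} — monotonicity of branch Perron values under containment — shows that this configuration of two branches pointing toward each other can occur at no more than one edge, identifying $e'=e$ and hence $\nu=\mu(T)$; the Type I case is excluded by the same monotonicity, since a characteristic vertex supports at least two competing Perron branches, incompatible with the strict dominance of $B_u(v)$ and $B_v(u)$ imposed by (3). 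This uniqueness-of-location argument, resting on Proposition~\ref{prop:big-branch-scalar}, is the delicate step; everything else is bottleneck bookkeeping and two applications of Perron--Frobenius.
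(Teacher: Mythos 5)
The paper does not prove Proposition~\ref{prop:ch-edge}: it is recalled verbatim from the literature (Kirkland--Neumann--Shader and Molitierno, Chapter~6), so there is no in-paper argument to compare against. Judged on its own, your proof is essentially correct and is a faithful reconstruction of the standard argument. The load-bearing identity $M_u(v)=N_D+(1/\theta)J$ with $N_D$ block diagonal over the branches at $v$ other than $B_v(u)$ is exactly the decomposition the paper itself uses later in the matrix-weight setting (Equation~\eqref{eqn:M-d-hat} in Lemma~\ref{lem:ch-edge-avg}); your monotone crossing of $f$ and $g$ reappears in Lemma~\ref{lem:ch-edge-avg} and Theorem~\ref{thm:lower-bound-lt}; and your uniqueness-of-configuration argument via Proposition~\ref{prop:big-branch-scalar} is the same chain of strict inequalities that drives Theorems~\ref{thm:ch-edge1} and~\ref{thm:ch-vertex}. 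I checked the computational steps: the row-sum identity $\mathds{1}_U^TL=\theta(\mathbf{e}_u-\mathbf{e}_v)^T$ does force $\alpha+\beta=1/\theta$; the matrices $M_u(v)-\gamma(1/\theta)J=N_D+(1-\gamma)(1/\theta)J$ are entrywise positive for $0<\gamma<1$, so Perron--Frobenius applies as you claim; and the $u$- and $v$-rows of the two eigenrelations do yield precisely the compatibility conditions needed for $L\mathbf{y}=\nu\mathbf{y}$.

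Two remarks. First, your exclusion of the Type~I case rests on the forward direction of Proposition~\ref{prop:ch-vertex} (``a characteristic vertex supports two or more Perron branches''), which is a separate nontrivial recalled result; the argument is not circular, since that direction is proved independently of the edge case, but your proof of Proposition~\ref{prop:ch-edge} is not self-contained without it, and you should say so explicitly. Second, the construction of the eigenvector $(\mathbf{p},-\mathbf{q})$ with eigenvalue $\nu$ is not actually needed to close $(3)\Rightarrow(1)$: once Type~I is excluded and the uniqueness argument forces the Fiedler edge $e'$ to equal $e$, statement $(1)$ follows immediately, and the identity $\nu=\mu(T)$ then comes for free from $(1)\Rightarrow(2)$ and the strict monotonicity of $f$ and $g$. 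Streamlining that step would shorten the proof without losing anything.
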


\begin{prop}\label{prop:ch-vertex}
Let $T$ be a  tree with positive weights on its edges. Then, $T$ is Type I with the characteristic vertex $x$ if and only if there are two or more Perron branches of $T$ at $x$. Moreover,  the algebraic connectivity of $T$ is $1/\rho (M_x)$.
\end{prop}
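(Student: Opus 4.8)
The plan is to prove both implications through the spectral link between $L$ and the principal submatrix $L_x = M_x^{-1}$, invoking the Fiedler proposition from~\cite{Fiedler2} only to translate between the combinatorial notion of characteristic vertex and the existence of a zero-entry eigenvector. First I would record a one-line inequality valid at \emph{every} vertex: since $L_v$ is an $(n-1)\times(n-1)$ principal submatrix of $L$, the Inclusion Principle (Theorem~\ref{thm:inclu}) with $r=n-1$, $k=1$ gives $\lambda_1(L_v)\le\lambda_2(L)=\mu(T)$; as $M_v=L_v^{-1}$ is block diagonal with entrywise positive blocks, its largest eigenvalue is $\rho(M_v)$, so $\lambda_1(L_v)=1/\rho(M_v)$ and hence
$$\mu(T)\ge \frac{1}{\rho(M_v)}\qquad\text{for every vertex } v.$$
This furnishes the easy half of each equality claim.

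For the direction ``$\ge 2$ Perron branches $\Rightarrow$ Type I'', suppose $x$ has two Perron branches $B_1,B_2$ with common value $r=\rho(M_x)$, and let $\mathbf{p}_i>0$ be the Perron eigenvector of $M_x(B_i)$, so the zero-extension $\hat{\mathbf{p}}_i$ (set to $\mathbf{0}$ off $B_i$ and at $x$) satisfies $L_x\hat{\mathbf{p}}_i=(1/r)\hat{\mathbf{p}}_i$ on $B_i$. I would seek an eigenvector of the full matrix $L$ of the form $\mathbf{y}=c_1\hat{\mathbf{p}}_1+c_2\hat{\mathbf{p}}_2$. Because $\mathbf{y}_x=0$ and the two supports lie in distinct branches at $x$, the equation $L\mathbf{y}=(1/r)\mathbf{y}$ holds automatically at every vertex except $x$, where it collapses to the single scalar condition $c_1\beta_1+c_2\beta_2=0$ with $\beta_i=\sum_{z\sim x,\,z\in B_i}W(xz)(\mathbf{p}_i)_z>0$. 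This one homogeneous equation has a nonzero solution with $c_1,c_2$ of opposite sign, yielding a genuine eigenvector $\mathbf{y}$ of $L$ for the eigenvalue $1/r$ that vanishes at $x$, is positive on $B_1$ and negative on $B_2$. As an eigenvector for a nonzero eigenvalue it is orthogonal to $\mathds{1}$, so $\mu(T)\le 1/r$; combined with the displayed inequality at $v=x$ this forces $\mu(T)=1/r=1/\rho(M_x)$, i.e.\ $\mathbf{y}$ is a Fiedler vector. Since $\mathbf{y}$ has a zero entry, the Fiedler proposition puts $T$ in Type I, and the only zero vertex adjacent to a nonzero vertex is $x$, so $x$ is the characteristic vertex.

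Conversely, for ``Type I with characteristic vertex $x$ $\Rightarrow$ $\ge 2$ Perron branches'', take a Fiedler vector $\mathbf{y}$; by the Fiedler proposition $\mathbf{y}_x=0$, and deleting the $x$-th coordinate gives $L_x\mathbf{y}'=\mu(T)\mathbf{y}'$, equivalently $M_x\mathbf{y}'=(1/\mu(T))\mathbf{y}'$. On each branch $B$ the restriction $\mathbf{y}'|_B$ is either $\mathbf{0}$ or, by the monotonicity in case (b), single-signed; a single-signed eigenvector of the entrywise positive matrix $M_x(B)$ must be its Perron eigenvector, whence $\rho(M_x(B))=1/\mu(T)$. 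Since $\mathbf{y}'$ sums to zero it changes sign, so at least two branches carry a nonzero single-signed restriction and thus have Perron value exactly $1/\mu(T)$. It remains to show these are genuine \emph{Perron} branches, i.e.\ that no branch $B^{\ast}$ satisfies $\rho(M_x(B^{\ast}))>1/\mu(T)$. Assuming such a $B^{\ast}$, its positive Perron eigenvector gives a zero-extension $\hat{\mathbf{p}}^{\ast}$ supported on $B^{\ast}$ with Rayleigh quotient $1/\rho(M_x(B^{\ast}))<\mu(T)$; choosing a branch $B_-$ on which $\mathbf{y}$ is negative, I would cancel the $\mathds{1}$-component by forming $\mathbf{w}=\hat{\mathbf{p}}^{\ast}+t\,\widehat{\mathbf{y}|_{B_-}}$ with $t>0$ so that $\mathds{1}^T\mathbf{w}=0$. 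The disjoint supports make $(\mathbf{w}^TL\mathbf{w})/(\mathbf{w}^T\mathbf{w})$ a weighted average of $1/\rho(M_x(B^{\ast}))<\mu(T)$ and $\mu(T)$, hence strictly below $\mu(T)$, contradicting $\mu(T)=\min_{\mathbf{x}\perp\mathds{1}}(\mathbf{x}^TL\mathbf{x})/(\mathbf{x}^T\mathbf{x})$ (the Rayleigh characterization of $\lambda_2(L)$ obtained by applying the Min-max Theorem to $L$ on the invariant subspace $\mathds{1}^{\perp}$). Thus $1/\mu(T)=\rho(M_x)$ and the two sign branches are Perron branches.

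The main obstacle is exactly this last maximality step. The interlacing bound $\mu(T)\ge 1/\rho(M_x)$ is never tight automatically, and a branch on which the Fiedler vector happens to vanish could a priori carry a larger bottleneck Perron value; excluding it requires the sign-matched test vector $\mathbf{w}$ rather than interlacing alone. Everything else is bookkeeping with the Perron--Frobenius facts already recalled (only the uniqueness of the positive Perron eigenvector is needed, so Theorem~\ref{thm:sp-comp} and Proposition~\ref{prop:big-branch-scalar} can be bypassed), together with the Fiedler proposition to pass between ``zero entry'' and ``Type I / characteristic vertex $x$''; the independence of the characteristic vertex from the chosen eigenvector is guaranteed by~\cite{Merris}.
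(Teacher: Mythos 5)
The paper never proves Proposition~\ref{prop:ch-vertex}; it is recalled as known background with a pointer to~\cite{Kirkland} and~\cite[Chapter 6]{Molitierno}, so there is no internal argument to compare yours against. Your proof is correct and self-contained, and it reconstructs the standard Kirkland--Neumann--Shader argument: the interlacing bound $\mu(T)\geq 1/\rho(M_v)$ at every vertex; the eigenvector $c_1\hat{\mathbf{p}}_1+c_2\hat{\mathbf{p}}_2$ glued from the Perron vectors of two Perron branches, which yields both ``two Perron branches $\Rightarrow$ Type I with characteristic vertex $x$'' and the equality $\mu(T)=1/\rho(M_x)$; and, for the converse, the identification of each sign-carrying branch of a Fiedler vector as a branch of Perron value exactly $1/\mu(T)$ via the orthogonality of eigenvectors of the symmetric entrywise positive matrix $M_x(B)$ to its positive Perron vector. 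You are also right about where the real content lies: interlacing only gives $\rho(M_x)\geq 1/\mu(T)$, so a zero branch $B^{*}$ with $\rho(M_x(B^{*}))>1/\mu(T)$ must be excluded separately, and your test vector $\mathbf{w}=\hat{\mathbf{p}}^{*}+t\,\widehat{\mathbf{y}|_{B_-}}$ does this correctly --- the supports lie in distinct branches, so the cross term $(\hat{\mathbf{p}}^{*})^{T}L\,\widehat{\mathbf{y}|_{B_-}}$ vanishes, the Rayleigh quotient is a strict convex combination of $1/\rho(M_x(B^{*}))<\mu(T)$ and $\mu(T)$, and the choice of $t>0$ enforcing $\mathds{1}^{T}\mathbf{w}=0$ is available because the two pieces have opposite-sign coordinate sums. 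The only steps taken on faith are standard and are flagged as such: the Courant--Fischer characterization of $\lambda_2(L)$ on $\mathds{1}^{\perp}$ (legitimate since $0$ is a simple eigenvalue for connected $T$) and the independence of the characteristic vertex from the chosen Fiedler vector~\cite{Merris}.
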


\begin{prop}\label{prop:perron-branch}
Let $T$ be a tree with positive weights on its edges. If $x$ is not a characteristic vertex of $\, T$, then the unique Perron branch at $x$  in $T$ is the branch which contains the characteristic vertex (or vertices) of $T$.
\end{prop}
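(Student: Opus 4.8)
The plan is to prove the stronger quantitative statement that, for a vertex $x$ that is not a characteristic vertex, the branch at $x$ containing the characteristic vertex (or vertices) has strictly larger Perron value than every other branch at $x$; this yields both the uniqueness of the Perron branch and the desired identification at once. Write $c$ for the characteristic vertex when $T$ is Type I, and let $u_1,u_2$ be the endpoints of the characteristic edge when $T$ is Type II. Since $x$ is not characteristic, exactly one neighbour $y$ of $x$ lies on the path from $x$ toward the characteristic vertex/edge, and $B_x(y)$ is precisely the branch at $x$ containing $c$ (respectively both $u_1$ and $u_2$). Writing $z_1,\dots,z_m$ for the remaining neighbours of $x$, the goal reduces to establishing $\rho(M_x(z_i))<\rho(M_x(y))$ for every $i$.

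First I would transport Perron values along the path $x=p_0,p_1,\dots,p_k$ from $x$ to the characteristic vertex (taking $p_k=u_1$ in Type II). Each outward branch satisfies $B_x(z_i)\subsetneq B_{p_1}(z_i)=B_{p_1}(x)$, while consecutive path vertices satisfy $B_{p_j}(x)\subsetneq B_{p_{j+1}}(x)$. Feeding each of these proper inclusions into Proposition~\ref{prop:big-branch-scalar} and chaining the resulting strict inequalities gives $\rho(M_x(z_i))<\rho(M_{p_k}(x))$, that is, $\rho(M_x(z_i))<\rho(M_c(x))$ in Type I and $\rho(M_x(z_i))<\rho(M_{u_1}(x))$ in Type II. Thus every outward branch is strictly dominated by the branch at the characteristic vertex that points back toward $x$.

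It then remains to show $\rho(M_c(x))\le\rho(M_c)<\rho(M_x(y))$ (resp. with $u_1$ in place of $c$), and this is where the characterizations of the characteristic vertex/edge enter. In Type I, Proposition~\ref{prop:ch-vertex} provides two or more Perron branches at $c$; since at most one branch at $c$ can point toward $x$, some Perron branch $B_c(w^*)$ has $w^*$ a neighbour of $c$ off the path to $x$. Rewriting $B_x(y)=B_x(w^*)$ and noting $B_c(w^*)\subsetneq B_x(w^*)$, Proposition~\ref{prop:big-branch-scalar} gives $\rho(M_c)=\rho(M_c(w^*))<\rho(M_x(y))$. In Type II one instead invokes the uniqueness statement of Proposition~\ref{prop:ch-edge}, namely that $B_{u_1}(u_2)$ is the unique Perron branch at $u_1$: rewriting $B_x(y)=B_x(u_2)$ and using $B_{u_1}(u_2)\subsetneq B_x(u_2)$ yields $\rho(M_{u_1})=\rho(M_{u_1}(u_2))<\rho(M_x(y))$. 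Either way, combining with the previous paragraph gives $\rho(M_x(z_i))<\rho(M_x(y))$ for all $i$, so $B_x(y)$ is the unique Perron branch at $x$ and it contains the characteristic vertex (or vertices).

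The delicate point is precisely this last comparison. Proposition~\ref{prop:big-branch-scalar} compares only nested branches sharing a common target vertex, so one must carefully re-express $B_x(y)$ as $B_x(w^*)$ (or $B_x(u_2)$) to cast the inequality into the form that proposition requires, and one must use the defining property of the characteristic vertex/edge to locate a Perron branch there that is properly contained in $B_x(y)$ while avoiding $x$. Once these identifications are set up correctly, the remaining inclusions are routine bookkeeping of branches along a path.
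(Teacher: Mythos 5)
Your proof is correct. Note that the paper does not actually supply a proof of Proposition~\ref{prop:perron-branch}: it is recalled from the literature (\cite{Kirkland}, \cite[Chapter~6]{Molitierno}). That said, your argument is the standard one and coincides in method with the proofs the paper gives for the matrix-weighted analogues (the ``moreover'' part of Theorem~\ref{thm:ch-edge1} and Case~1 of Theorem~\ref{thm:ch-vertex}): everything rests on nested-branch monotonicity of Perron values (Proposition~\ref{prop:big-branch-scalar}) applied to the inclusions $B_x(z_i)\subsetneq B_{p_1}(x)\subsetneq\cdots\subsetneq B_{p_k}(x)$ and $B_c(w^*)\subsetneq B_x(y)$ (resp.\ $B_{u_1}(u_2)\subsetneq B_x(y)$), together with the defining property of the characteristic vertex or edge to locate a Perron branch there avoiding $x$. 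The only cosmetic difference is that you chain the inequality along the entire path from $x$ to the characteristic vertex, whereas the paper's analogous arguments obtain the same comparison by a single application of the monotonicity lemma between $x$ and the characteristic(-like) vertex or its neighbour; both routes are valid and of the same depth.
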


The above characterizations for trees have provided a new direction in understanding the structure of trees using the Laplacian matrix. In this direction, several intriguing results have been obtained by various researchers (for example, see~\cite{Abreu,Kirkland1, Lal, Patra1, Patra2}). In the last decade, some interesting results were obtained by considering graphs with matrix weights on their edges (for example, see~\cite{Atik,Atik1,Bapat1,Bapat2,Zhou}). Particularly, in~\cite{Atik}, the authors defined the  Laplacian matrix analogously for graphs with matrix weights on their edges.  As a special case, if the edge weights are positive definite matrices, then the Laplacian matrix is a positive semidefinite matrix. They have also proved an interesting result: Let $G$ be a connected graph on $n$ vertices with nonsingular matrix weights of order $s\times s$ on its edges and $L$ be the Laplacian matrix of $G$. Then,  the Laplacian matrix $L$ is of rank $(n-1)s$ if the graph $G$ is a tree.  However, the result is not necessarily true if the graph $G$ is not a tree. These developments have encouraged us to study the  Laplacian matrices of trees with matrix weights.

 In this article, our objective is to consider trees with a suitable class of matrix edge weights and establish the existence of some notion of the characteristic vertex (or vertices) using characterization in terms of Perron branches and Perron values analogous to trees with positive edge weights. We refer to such vertices as characteristic-like vertex (or vertices).  Moreover, we also provide a lower bound for the first non-zero eigenvalue of the Laplacian matrix. To be more specific, we are interested in trees with  the following  classes of matrix weights on their  edges:
\begin{enumerate}
\item[$1.$] positive definite matrix weights, 

\item[$2.$] lower (or upper) triangular matrix weights with positive diagonal entries.
\end{enumerate}

This article is organized as follows. In  Section~\ref{sec:Lap-n-bot}, we consider the principal submatrix $L_v$ of the Laplacian matrix  $L$ of a tree with matrix weights on its edges. We compute the determinant of $L_v$ and show that $L_v$ is an invertible matrix if and only if the edge weights are nonsingular matrices. Then, we find the inverse of $L_v$ and define the bottleneck matrix for a branch of a tree with nonsingular matrix edge weights. Further, using  $L_v^{-1}$, we find the Moore-Penrose inverse of the Laplacian matrix $L$. In Section~\ref{sec:char-perron}, we consider trees with the above class of matrix weights on their edges and show the existence of vertices satisfying properties analogous to the properties of characteristic vertices of trees with positive edge weights in terms of Perron values and Perron branches. Finally, in Section~\ref{sec:lower-bound}, we obtain a lower bound for the first non-zero eigenvalue of the Laplacian matrices of trees with the above classes of matrix edge weights in terms of Perron values.

\section{Laplacian Matrix and  Bottleneck Matrix }\label{sec:Lap-n-bot}

In this section, we consider the Laplacian matrices for trees with matrix weights on their edges and define the bottleneck matrix of a branch. As an application, we compute the Moore-Penrose inverse of the Laplacian matrix of a tree with nonsingular matrix weights.   

The Laplacian matrix  of a  graph with matrix weights on its edges  is  defined analogously. For the sake of completeness we recall its definition. Let $G=(V,E)$ be a graph on $n$ vertices and for each  edge $e \in E$ the associated matrix weight $W(e)$ is of order $s\times s$. The  Laplacian matrix $L(G)=[l_{uv}]$ is a  matrix of order $ns\times ns$ and is defined as follows:  for $u,v \in V$, if $u\neq v$,  then $l_{uv}$  is $\mathbf{0}$ if $u\nsim v$, and $l_{uv}$  is $-W(e)$ if $u\sim v$ and $e$ is the edge between them; finally if $u=v$, $l_{vv}$ is the sum of the weights of the edges in $G$ which are incident with the vertex $v$. We also write $L$ for the Laplacian matrix $L(G)$ if there is no scope for confusion. 

Before proceeding further, we recall the definition of the Kronecker product and some of its properties.
\begin{rem}\label{rem:rem1}
The Kronecker product of matrices $A=[a_{ij}]$ of order $m\times n$ and $B$ of order $p\times q$, denoted by $A\otimes B$, is defined to be the block matrix $[a_{ij}B]$. Then the following  hold true.
\begin{enumerate}
\item[$1.$] Let $A$ and $B$ be two square matrices. Let $\lambda\in \sigma(A)$ with corresponding eigenvector $\mathbf{x}$, and let $\mu\in \sigma(B)$ with corresponding eigenvector $\mathbf{y}$. Then $\lambda \mu$ is an eigenvalue of $A\otimes B$ with corresponding eigenvector $\mathbf{x} \otimes \mathbf{y}$. Moreover, any eigenvalue of $A\otimes B$  is a product of eigenvalues of $A$ and $B$.

\item[$2.$] Let $W$ be an $s\times s$ invertible matrix and  $\mathbf{y}_i^T \in \mathbb{R}^s$ for $1\leq i\leq n$.  Then, the vector  $\widetilde{\mathbf{y}}=(\mathbf{y}_1, \mathbf{y}_2, \ldots, \mathbf{y}_n)^T \in $ \textup{Null}($J_n \otimes W$) if and only if $\sum_{i=1}^n \mathbf{y}_i = \mathbf{0}. $
\end{enumerate}
\end{rem}
 
In~\cite{Atik}, it was shown that if $T$ is a tree on $n$ vertices with nonsingular matrix weights of order $s\times s$, then the rank of the Laplacian matrix $L$  of $T$ is $(n-1)s$.  Thus, it is natural to study the principal matrix $L_v$ of $L$ obtained by deleting the row block and column block corresponding to a vertex $v$. We begin with a result on the determinant of $L_v$. The proof is similar to that of trees with positive edge weights.

\begin{theorem}\label{thm:det-nonsingular}
Let $L$ be the Laplacian matrix  of a tree $T=(V,E)$ with  matrix weights on its edges. Let $L_{v}$ be the principal submatrix of $L$ obtained by deleting the row  block and column block corresponding to the vertex $v\in V$. Then 
$$\det L_{v}= \prod_{e\in E} \det W(e).$$
\end{theorem}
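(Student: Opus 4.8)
The plan is to factor the Laplacian through the (oriented) vertex--edge incidence matrix of $T$, turning $\det L_v$ into a product involving the determinant of the \emph{reduced} incidence matrix of a tree, which is unimodular. The appeal of this route is that it goes through verbatim even when some weights $W(e)$ are singular, so no genericity or limiting argument is needed.

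First I would fix an arbitrary orientation of each edge and let $Q$ be the $n\times(n-1)$ oriented incidence matrix, whose column indexed by $e$ is the vector $a_e$ carrying $+1$ and $-1$ at the two endpoints of $e$ and $0$ elsewhere. Writing $\mathcal{W}=\mathrm{diag}(W(e_1),\dots,W(e_{n-1}))$ for the block-diagonal weight matrix of order $(n-1)s$, a direct block computation gives
$$
L=(Q\otimes I_s)\,\mathcal{W}\,(Q^T\otimes I_s),
$$
because the $(i,j)$ block of the right-hand side is $\sum_{e}Q_{ie}Q_{je}\,W(e)$, which returns $\sum_{e\ni i}W(e)$ on the diagonal and $-W(e)$ for the edge $e$ joining $i$ to $j$. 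Since deleting the row block and column block of $v$ is the same as deleting the rows and columns of $Q\otimes I_s$ indexed by $v$, I obtain
$$
L_v=(Q_v\otimes I_s)\,\mathcal{W}\,(Q_v^T\otimes I_s),
$$
where $Q_v$ is the $(n-1)\times(n-1)$ matrix obtained from $Q$ by deleting the row of $v$.

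The crux is that $Q_v$ is square of order $n-1$ and, because $T$ is a tree (so $|E|=n-1$ and $T$ is its own unique spanning tree), $\det Q_v=\pm 1$. This is the step I expect to be the main point to pin down. I would prove it by induction on $n$, peeling off a leaf $p\neq v$ with neighbour $q$: expanding the determinant of $Q_v$ along the column of the pendant edge $pq$ (a column with a single nonzero entry, in row $p$) reduces $\det Q_v$ to $\pm$ the determinant of the reduced incidence matrix of the smaller tree $T-p$, which closes the induction, the base case $n=2$ being immediate. (Alternatively one may simply invoke the total unimodularity of incidence matrices.)

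Finally I would assemble the pieces using multiplicativity of the determinant together with $\det(A\otimes I_s)=(\det A)^s$ and $\det\mathcal{W}=\prod_{e\in E}\det W(e)$:
$$
\det L_v=\big(\det Q_v\big)^{s}\Big(\prod_{e\in E}\det W(e)\Big)\big(\det Q_v\big)^{s}=\big(\det Q_v\big)^{2s}\prod_{e\in E}\det W(e)=\prod_{e\in E}\det W(e),
$$
the last equality using $(\det Q_v)^{2s}=1$. A more hands-on alternative avoiding incidence matrices is a direct induction on the tree, deleting a leaf $p\neq v$ and performing block elimination of the pendant block (equal to $W(e_{pq})$) to relate $\det L_v$ to $\det W(e_{pq})$ times the corresponding reduced determinant of $T-p$; but that variant requires extra care exactly when the pendant weight is singular, which is why I favour the incidence-matrix factorization.
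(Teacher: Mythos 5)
Your proposal is correct, and it proves the theorem by a genuinely different route than the paper. The paper argues by induction on the number of vertices: it writes $L_v$ as a block-diagonal matrix with one block $\widehat{L}(B_v(v_i))$ per branch at $v$, observes that each such block is the branch Laplacian plus the correction $\mathbf{e}_{v_i}\mathbf{e}_{v_i}^T\otimes W(e^{(i)})$, and then performs block row and column operations (adding all column blocks to the column block of $v_i$, and likewise for rows) to split off a factor $\det W(e^{(i)})$ and invoke the induction hypothesis on the smaller tree. Your argument instead factors $L=(Q\otimes I_s)\,\mathcal{W}\,(Q^T\otimes I_s)$ with $Q$ the ordinary \emph{unweighted} oriented incidence matrix and $\mathcal{W}$ the block diagonal of the edge weights, reducing everything to the classical unimodularity $\det Q_v=\pm 1$ of the reduced incidence matrix of a tree together with $\det(Q_v\otimes I_s)=(\det Q_v)^s$. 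This is a sound computation (the $(i,j)$ block of the product is indeed $\sum_e Q_{ie}Q_{je}W(e)$, which reproduces the paper's definition of $L$ even for nonsymmetric weights), and it is worth noting that your factorization deliberately differs from the weighted incidence matrix the paper introduces later, whose entries $\pm\sqrt{W(e)}$ only make sense for positive definite weights; by keeping the weights in a separate diagonal factor you cover arbitrary, possibly singular, matrix weights in one stroke, exactly as the statement requires. What each approach buys: the paper's induction is self-contained and local to the tree structure, and its intermediate objects ($\widehat{L}(B_v(v_i))$ and the branch bottleneck matrices) are reused heavily in the rest of the paper; your factorization is shorter, makes the exact value $\prod_{e}\det W(e)$ transparent, and isolates all the combinatorics into the single classical fact $\det Q_v=\pm1$, which you correctly flag as the one step needing its own (easy leaf-peeling) induction or a citation to total unimodularity.
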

\begin{proof}
We prove this result by using induction on the number of vertices  $|V|=n$. The result is vacuously true  for $n=2$. Assume that the result is true for those trees whose number of vertices is strictly less than $n$. 

Let $v\in V$ and  $ \text{deg}(v)=r$. For $1\leq i\leq r$,  let $v$  be  adjacent to the vertex $v_i$ via the edge $e^{(i)}$. Thus, $B_v(v_i)$ for $1\leq i\leq r$ represents  all the branches at $v$ and  the block matrix $L_v$ can be written as 
\begin{equation}\label{eqn:prin-det}
{\small
L_v=\left[
\begin{array}{c |c| c| c}
    \widehat{L}(B_v(v_1)) & \mathbf{0}& \dots & \mathbf{0} \\
    \midrule
    \mathbf{0} & \widehat{L}(B_v(v_2)) & \dots & \mathbf{0} \\
    \midrule    
    \vdots & \vdots & \ddots & \vdots \\
    \midrule
    \mathbf{0} & \mathbf{0}& \dots & \widehat{L}(B_v(v_r))
\end{array}
\right],}
\end{equation}
where $\widehat{L}(B_v(v_i))$ is the principal submatrix of $L$ corresponding to the branch $B_v(v_i)$ for $1\leq i\leq r$.  

Let $L(B_v(v_i))$ denote the Laplacian matrix of the branch $B_v(v_i)$. Then
$$\widehat{L}(B_v(v_i))= L(B_v(v_i)) + \mathbf{e}_{v_i} \mathbf{e}_{v_i}^T\otimes W(e^{(i)}) \mbox{ for } 1\leq i\leq r, $$ 
where $\mathbf{e}_{v_i}$ is the column vector of  conformal order with $1$ at $v_i^{th}$ entry and $0$ elsewhere.

For each $1\leq i\leq r$,  let $L(B_v(v_i))_{v_i}$ denote the principal submatrix  of $ L(B_v(v_i))$ obtained by deleting the row block and column block  corresponding to the vertex $v_i$.  Using the induction hypothesis, we have 
\begin{equation}\label{eqn:prin-det1}
\det L(B_v(v_i))_{v_i}= \prod_{e\in E(B_v(v_i))} \det W(e).
\end{equation}
Further, if  we  add all the column blocks  of  $\widehat{L}(B_v(v_i))$  to the column block corresponding to the vertex $v_i$ and repeat a similar operation for row blocks, then the resulting matrix  can be represented as  
{\small $$\left[
\begin{array}{c|c}
L(B_v(v_i))_{v_i}& \mathbf{0} \\
\midrule
\mathbf{0} & W(e^{(i)})
\end{array}
\right].$$}
Using Equation~\eqref{eqn:prin-det1}, we now get
$$\det \widehat{L}(B_v(v_i))= \det W(e^{(i)})  \times \prod_{e\in E(B_v(v_i))} \det W(e).$$
The desired result follows from Equation~\eqref{eqn:prin-det}.
\end{proof}

Under the hypothesis of the above theorem,  if $T$ is a tree with nonsingular matrix edge weights, then $L_v$ is an invertible matrix. Our next objective is to find the inverse of $L_v$. We first consider the case where the edge weights are positive definite matrices. We now recall the definition of the incidence matrices of graphs with positive definite matrix edge weights.

Let $G=(V, E)$ be a   graph with $n$ vertices and $m$ edges such that the weights associated with the edges are positive definite matrices of order $s\times s$.   We assign an orientation to each edge of $G$. Then, the vertex-edge incidence matrix $Q$ is a block matrix such that the row blocks are indexed by the vertex set $V$ and the column blocks are indexed by the edge set $E$.   The  vertex-edge incidence matrix $Q= [Q_{ue}]$ is a matrix of order $ns \times ms$, where
\begin{equation}\label{eqn:inci-matrix}
Q_{ue}= \left\{\begin{array}{rl}  \sqrt{W(e)} & \mbox{if}\ u \ \mbox{is the initial vertex of the edge e}, \\
              -\sqrt{W(e)} &  \mbox{if}\ u \  \mbox{is the terminal vertex of the edge e},\\
                  \mathbf{0} &  \mbox{otherwise}. \end{array}\right.
\end{equation}

It can be seen that, for a given graph $G=(V, E)$ with positive definite weights on its edges,  the Laplacian matrix $L$ of $G$  is given by $L=QQ^{T}$. Then  $L_{v}= Q_v Q_v^{T}$, where $Q_v$  is the block matrix obtained by deleting the row block of $Q$ corresponding to the vertex $v \in V$. In particular, if $G$ is a tree, then by  Theorem~\ref{thm:det-nonsingular} we have $\det L_v= (\det Q_v)^2\neq 0$. This implies that $Q_v$ is an invertible matrix and $L_v^{-1}= (Q_v^{-1})^T Q_v^{-1}$. 
We now compute the inverse of $Q_v$ when the graph $G$ is a tree. The argument used to find the inverse $Q_v^{-1}$ is similar to the proof for those trees whose edge weights are all $1$ (for details, see~\cite[Chapter~$2$]{Bapat}). 

 Given a path $\mathscr{P}$ in $G$, the incidence block vector of $\mathscr{P}$  is an $ms\times s$ matrix   (a column block indexed by the edge set $E$) and is defined as follows: for any $e \in E$, the  entry corresponding to $e$ is the matrix $\mathbf{0}$, if the path does not contain $e$. If the path contains $e$, then the entry corresponding to $e$ is $\left(\sqrt{W(e)}\right)^{-1}$ or $-\left(\sqrt{W(e)}\right)^{-1}$, depending on whether the direction of the path agrees or disagrees, respectively with $e$.

Let $T=(V,E)$ be a tree. For $v\in V$,  the path matrix $P_v$ of $T$ is an $ms\times (n-1)s$ matrix ($P_v$ is a block matrix such that rows are indexed by the edge set $E$  and column blocks indexed  by the  vertex set $V-\{v\}$) and is defined  as follows. For $u \in  V-\{v\}$, the  column block corresponding to the vertex $u$ of $P_v$ is the incidence vector of the path from $u$ to $v$. 

\begin{theorem}\label{thm:pd-bt}
Let $T=(V,E)$ be a tree with  positive definite matrix  weights on its edges. Let $Q$ be the incidence matrix of $T$ and  $Q_v$ be the block matrix obtained by deleting the row block of $Q$ corresponding to the vertex $v \in V$. Then $Q_v^{-1}= P_v$.
\end{theorem}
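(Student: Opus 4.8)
The plan is to verify directly that $Q_v P_v = I$, the identity matrix of order $(n-1)s$. Since $Q_v$ is square and invertible—recall that for a tree $\det L_v = (\det Q_v)^2 \neq \mathbf{0}$ by Theorem~\ref{thm:det-nonsingular}—this single identity immediately yields $Q_v^{-1} = P_v$, so there is no need to check the product in both orders. Both factors are block matrices of order $(n-1)s$: the row blocks of $Q_v$ and the column blocks of $P_v$ are indexed by $V-\{v\}$, while the column blocks of $Q_v$ and the row blocks of $P_v$ are indexed by $E$. Hence, for $w,u \in V-\{v\}$, the $(w,u)$ block of the product is $\sum_{e\in E} Q_{we}\,(P_v)_{eu}$, where $Q_{we}$ is given by Equation~\eqref{eqn:inci-matrix} and $(P_v)_{eu}$ is the entry at $e$ of the incidence block vector of the path $\mathcal{P}(u,v)$.

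First I would isolate a single product $Q_{we}\,(P_v)_{eu}$. It is nonzero only when $w$ is an endpoint of $e$ (so that $Q_{we}=\pm\sqrt{W(e)}$) and, simultaneously, $e$ lies on $\mathcal{P}(u,v)$ (so that $(P_v)_{eu}=\pm(\sqrt{W(e)})^{-1}$); in that case $\sqrt{W(e)}$ cancels against its inverse, leaving $\pm I$. The crux is to pin down the sign. There are two independent sources of sign here—the fixed orientation assigned to $e$ in $Q$, and whether the traversal of $\mathcal{P}(u,v)$ from $u$ towards $v$ agrees or disagrees with that orientation—and checking the resulting four cases shows that they always combine so that the product depends only on the position of $w$ along the edge: it equals $+I$ when $w$ is the endpoint of $e$ nearer to $u$ on the path, and $-I$ when $w$ is the endpoint nearer to $v$. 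This bookkeeping is the main (though routine) obstacle, since it is where the orientation conventions must be handled with care; once it is done, the orientation drops out entirely.

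With this rule in hand, the block sum telescopes along the path. Writing $\mathcal{P}(u,v)$ as $u=x_0, x_1, \dots, x_k = v$, I would argue as follows. If $w$ does not lie on $\mathcal{P}(u,v)$, it is an endpoint of no edge of the path and the block vanishes. If $w=x_i$ is an interior vertex ($0<i<k$), then $w$ is the $v$-side endpoint of $\{x_{i-1},x_i\}$ and the $u$-side endpoint of $\{x_i,x_{i+1}\}$, contributing $-I$ and $+I$, which cancel. The only surviving case is $w=u=x_0$, incident to the single path-edge $\{x_0,x_1\}$ as its $u$-side endpoint, contributing $+I$; the vertex $x_k=v$ never arises since $w\in V-\{v\}$.

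Consequently the $(w,u)$ block of $Q_v P_v$ equals $I$ when $w=u$ and $\mathbf{0}$ otherwise, that is, $Q_v P_v = I$, and therefore $Q_v^{-1}=P_v$. An alternative route would be an induction on $|V|$ that peels off a pendant vertex, mirroring the proof of Theorem~\ref{thm:det-nonsingular}, but the direct block computation above is cleaner and makes the telescoping cancellation transparent, so I would prefer it.
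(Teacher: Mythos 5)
Your proof is correct. It is the mirror image of the paper's argument: the paper verifies the product in the opposite order, computing $X=P_vQ_v$, whose blocks are indexed by pairs of \emph{edges}, and kills the off-diagonal block $X_{e_ie_j}$ by observing that for the two endpoints $w,z$ of $e_j$ the paths $\mathcal{P}(w,v)$ and $\mathcal{P}(z,v)$ contain any edge $e_i\neq e_j$ simultaneously and traverse it in the same direction, so $P_{e_iw}=P_{e_iz}$ and the factor $(P_{e_iw}-P_{e_iz})\sqrt{W(e_j)}$ vanishes; the diagonal blocks follow because exactly one endpoint of $e_i$ has its path to $v$ passing through $e_i$. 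Your vertex-indexed computation of $Q_vP_v$ replaces that cancellation with a telescoping sum along $\mathcal{P}(u,v)$, and your sign rule ($+I$ at the endpoint of $e$ nearer $u$, $-I$ at the endpoint nearer $v$, independently of the chosen orientation) does check out in all four cases. Both verifications are equally routine; note also that since $Q_v$ and $P_v$ are square of the same order, $Q_vP_v=I$ already forces $P_vQ_v=I$, so the appeal to $\det L_v=(\det Q_v)^2\neq 0$ is not even needed to justify checking only one side.
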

\begin{proof}
 Let $X=P_v Q_v$ be an $ms\times ms$  matrix. For $i\neq j$,  let $e_i$ be the edge from $x$ to $y$ and let  $e_j$ be the edge from $w$ to $z$. The $(u,e_j)^{th}$ entry of the  incidence matrix $Q$ is  $Q_{ue_j}=\mathbf{0}$ unless $u=w$ or $u=z$. Thus, 
$$\ds X_{e_ie_j}= \sum_{u\in V-\{v\}} P_{e_i u}Q_{ue_j}= P_{e_iw}Q_{we_j}+ P_{e_iz}Q_{ze_j}=( P_{e_iw}-P_{e_iz})\sqrt{ W(e_j)}.$$  
Note that, the path from $w$ to $v$ contains $e_i$ if and only if the path from $z$ to $v$ contains $e_i$. Moreover, if $P_{e_iw}$ and $P_{e_iz}$ are non-zero, then they share the same sign. Thus, $P_{e_iw}=P_{e_iz}$ which implies that $\ds X_{e_ie_j}=\mathbf{0}$ whenever $i \neq j$.

For $i=j$, the path from $x$ to $v$ contains $e_i$ if and only if   the path from $y$ to $v$ does not contain $e_i$. Thus, if $e_i$ is in the path from $x$ to $v$, then $ X_{e_ie_i}= P_{e_ix}Q_{xe_i}=\left(\sqrt{ W(e_j)}\right)^{-1} \sqrt{ W(e_j)} =I$. Similarly, if $e_i$ is in the path from $y$ to $v$, then $ X_{e_ie_i}= P_{e_iy}Q_{ye_i}=\left(- \left(\sqrt{ W(e_j)}\right)^{-1}\right)\left(- \sqrt{ W(e_j)}\right) =I$. This completes the proof.
\end{proof}

\begin{cor}\label{cor:inv}
Let $T=(V,E)$ be a tree with  positive definite matrix  weights  on its edges and   $L$ be the Laplacian matrix of $T$.  Let $L_{v}$ denote the principal submatrix of $L$ obtained by deleting the row  block and column block corresponding to  the vertex $v\in V$. Then for $u,w\in V-\{v\}$, the block at  $(u,w)$ position   of $L_v^{-1}$ is given by
$$(L_v^{-1})_{uw}= \sum_{e \in \mathcal{P}(u,v)\, \cap \, \mathcal{P}(w,v)} W(e)^{-1},$$
where $\mathcal{P}(x,y)$ denotes the path joining the vertices   $x$  and $y$ in $T$.
\end{cor}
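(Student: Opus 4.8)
The plan is to combine the factorization $L_v = Q_v Q_v^{T}$ with the explicit inverse $Q_v^{-1} = P_v$ supplied by Theorem~\ref{thm:pd-bt}, and then read off the block entries of $L_v^{-1}$ from the path matrix $P_v$. Since the edge weights are positive definite, $L = QQ^{T}$ and hence $L_v = Q_v Q_v^{T}$; moreover Theorem~\ref{thm:det-nonsingular} guarantees that $Q_v$ is invertible, so
$$L_v^{-1} = (Q_v^{T})^{-1} Q_v^{-1} = (Q_v^{-1})^{T} Q_v^{-1} = P_v^{T} P_v,$$
using $Q_v^{-1} = P_v$. Thus the entire computation reduces to evaluating the $(u,w)$ block of the Gram-type product $P_v^{T} P_v$.

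The second step is a direct block multiplication. Writing $(P_v)_{eu}$ for the $s\times s$ block of $P_v$ at row block $e\in E$ and column block $u$, I would expand
$$(L_v^{-1})_{uw} = (P_v^{T}P_v)_{uw} = \sum_{e\in E} \big((P_v)_{eu}\big)^{T}\,(P_v)_{ew}.$$
By the definition of the path matrix, the block $(P_v)_{eu}$ is the incidence-block-vector entry of the path $\mathcal{P}(u,v)$ at the edge $e$; it equals $\mathbf{0}$ unless $e$ lies on $\mathcal{P}(u,v)$, in which case it is $\pm\big(\sqrt{W(e)}\big)^{-1}$ according to whether the traversal direction of the path agrees or disagrees with the fixed orientation of $e$. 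Consequently a summand is nonzero only when $e$ lies on \emph{both} $\mathcal{P}(u,v)$ and $\mathcal{P}(w,v)$, which already collapses the sum to the common edge set $\mathcal{P}(u,v)\cap\mathcal{P}(w,v)$ appearing in the statement.

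The step I expect to be the crux is pinning down the sign of each surviving term. For a common edge $e$, both $(P_v)_{eu}$ and $(P_v)_{ew}$ equal $\varepsilon\,\big(\sqrt{W(e)}\big)^{-1}$ for some sign $\varepsilon\in\{+1,-1\}$, and I must show the two signs coincide so their product is $+1$. This is the combinatorial heart: in a tree, the portions of $\mathcal{P}(u,v)$ and $\mathcal{P}(w,v)$ that share the edge $e$ both run toward the common endpoint $v$, so they traverse $e$ in the same direction; hence each agrees or disagrees with the fixed orientation of $e$ in exactly the same way, giving equal signs. Finally, because $W(e)$ is positive definite, its principal square root $\sqrt{W(e)}$ is symmetric, so $\big(\sqrt{W(e)}\big)^{-1}$ is symmetric and the transpose in the block product is harmless; each common-edge term therefore reduces to
$$\big(\sqrt{W(e)}\big)^{-1}\big(\sqrt{W(e)}\big)^{-1} = W(e)^{-1}.$$
Summing over $e\in\mathcal{P}(u,v)\cap\mathcal{P}(w,v)$ yields the claimed formula. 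The sign-agreement observation is exactly the matrix analogue of the corresponding fact in the scalar bottleneck-matrix theory, and it is the only place where the tree structure is genuinely used.
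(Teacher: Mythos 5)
Your proposal is correct and follows essentially the same route as the paper: factor $L_v=Q_vQ_v^{T}$, invoke Theorem~\ref{thm:pd-bt} to get $L_v^{-1}=P_v^{T}P_v$, and then observe that the surviving block products over common edges all carry the same sign because both paths traverse each shared edge toward $v$. Your explicit remark that the symmetry of $\sqrt{W(e)}$ makes the transpose harmless is a small point the paper's proof leaves implicit, but it does not change the argument.
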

\begin{proof}
Using $L_{v}= Q_v Q_v^{T}$ and by  Theorem~\ref{thm:pd-bt},  we  have  $L_v^{-1}= P_v^T P_v$. Thus
$$(L_v^{-1})_{uw}= \sum_{e\in E} P_{eu}P_{ew}.$$
Further, $P_{eu}P_{ew}$ is non-zero  if and only if  the edge $e$ is in both the paths $\mathcal{P}(u,v)$ and $\mathcal{P}(w,v)$. In this case, the orientation of $e$ either agrees or disagrees simultaneously, for both  paths. Thus, $P_{eu}P_{ew}=W(e)^{-1}$, if $e \in \mathcal{P}(u,v)\, \cap \, \mathcal{P}(w,v) $ and    $\mathbf{0}$ otherwise. Hence the result follows.
\end{proof}

By Corollary~\ref{cor:inv}, the block matrix form of $L_v^{-1}$  for trees with positive definite matrix weights on its edges is similar to the case where trees have positive edge weights (for details, see~\cite[Proposition $1$]{Kirkland}). We now show that the above block matrix form is unchanged even if the weights assigned to the edges are nonsingular.

\begin{theorem}\label{thm:non-sing-inv}
Let $L$ be the Laplacian matrix  of a tree $T=(V,E)$ with nonsingular matrix weights  on its edges. Let $L_{v}$ denote the principal submatrix of $L$ obtained by deleting the row  block and column block corresponding to  the vertex $v\in V$. Then for $u,w\in V-\{v\}$, the block at  $(u,w)$ position   of $L_v^{-1}$ is given by
$$(L_v^{-1})_{uw}= \sum_{e \in \mathcal{P}(u,v)\, \cap \, \mathcal{P}(w,v)} W(e)^{-1},$$
where $\mathcal{P}(x,y)$ denotes the path joining the vertices   $x$  and $y$ in $T$.
\end{theorem}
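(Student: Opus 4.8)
The matrix $L_v$ is invertible by Theorem~\ref{thm:det-nonsingular}, since the weights $W(e)$ are nonsingular, so it suffices to exhibit a matrix with the asserted block entries and verify that it is the inverse. The obstruction to simply reusing Corollary~\ref{cor:inv} is that the factorization $L=QQ^T$ relied on the square roots $\sqrt{W(e)}$, which need not exist once $W(e)$ is merely nonsingular (and possibly non-symmetric). The plan is therefore to replace this by a \emph{square-root-free} factorization. Let $Q^{(0)}$ denote the scalar oriented vertex-edge incidence matrix of $T$ (entries $1,-1,0$), set $\mathcal{Q}=Q^{(0)}\otimes I_s$, and let $\mathcal{W}=\bigoplus_{e\in E}W(e)$ be the block-diagonal matrix of edge weights. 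A direct check of the diagonal and off-diagonal blocks shows $L=\mathcal{Q}\,\mathcal{W}\,\mathcal{Q}^T$; because each weight enters as $(\pm I)\,W(e)\,(\pm I)$, this identity is valid for arbitrary weights, symmetric or not. Deleting the block row indexed by $v$ gives $L_v=\mathcal{Q}_v\,\mathcal{W}\,\mathcal{Q}_v^T$, where $\mathcal{Q}_v=Q^{(0)}_v\otimes I_s$ and $Q^{(0)}_v$ is the scalar reduced incidence matrix.

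Since $T$ is a tree, $m=n-1$, so $\mathcal{Q}_v$ is square of order $(n-1)s$, and $Q^{(0)}_v$ is invertible with inverse the scalar path matrix $P^{(0)}_v$ (the unweighted analogue of $P_v$; see~\cite[Chapter~$2$]{Bapat}). Hence $\mathcal{Q}_v^{-1}=P^{(0)}_v\otimes I_s$, and, as the $W(e)$ are nonsingular, $\mathcal{W}^{-1}=\bigoplus_{e}W(e)^{-1}$. Then
\[
L_v^{-1}=(\mathcal{Q}_v^T)^{-1}\,\mathcal{W}^{-1}\,\mathcal{Q}_v^{-1}=\big((P^{(0)}_v)^T\otimes I_s\big)\Big(\bigoplus_{e}W(e)^{-1}\Big)\big(P^{(0)}_v\otimes I_s\big).
\]
Reading off the $(u,w)$ block yields $\sum_{e\in E}(P^{(0)}_v)_{eu}(P^{(0)}_v)_{ew}\,W(e)^{-1}$. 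The scalar coefficient $(P^{(0)}_v)_{eu}(P^{(0)}_v)_{ew}$ is nonzero precisely when $e$ lies on both $\mathcal{P}(u,v)$ and $\mathcal{P}(w,v)$, and on their common part the two orientations agree or disagree simultaneously, so the product equals $1$ there; this is exactly the sign argument used in the proof of Corollary~\ref{cor:inv}. Consequently the $(u,w)$ block equals $\sum_{e\in\mathcal{P}(u,v)\cap\mathcal{P}(w,v)}W(e)^{-1}$, as claimed.

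The main obstacle is locating the correct square-root-free factorization $L=\mathcal{Q}\mathcal{W}\mathcal{Q}^T$; once it is in hand the remainder is just the scalar tree incidence and path-matrix identities tensored with $I_s$, and the only point needing care is that both the factorization and the inverse identities hold without any symmetry assumption on the $W(e)$. As an alternative that avoids incidence matrices altogether, one can instead verify directly that the proposed matrix $M_v$ satisfies $L_vM_v=I$: rooting $T$ at $v$ and writing $T_u$ for the subtree below $u$, the block entries obey the telescoping relation $(M_v)_{uw}=(M_v)_{u^{+}w}+W(e_u)^{-1}$ when $w\in T_u$ and $(M_v)_{uw}=(M_v)_{u^{+}w}$ otherwise, where $u^{+}$ is the parent of $u$ and $e_u$ the edge joining them. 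Substituting this into the block product $(L_vM_v)_{uw}$ makes the parent and children contributions collapse to $\delta_{uw}I$, again using only $W(e)W(e)^{-1}=I$; since $L_v$ is invertible, this identifies $M_v$ as $L_v^{-1}$.
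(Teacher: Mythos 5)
Your argument is correct, and your primary route is genuinely different from the paper's. The paper proves the theorem by brute force: it writes down the candidate matrix $B$ with blocks $B_{uw}=\sum_{e\in\mathcal{P}(u,v)\cap\mathcal{P}(w,v)}W(e)^{-1}$ and verifies $L_vB=I$ block-by-block, splitting into the cases $u=w$ versus $u\neq w$, $u\sim v$ versus $u\nsim v$, and $u\in\mathcal{P}(w,v)$ versus not — essentially the telescoping computation you sketch as your ``alternative'' at the end, organized by adjacency rather than by rooting the tree at $v$. Your main proof instead fixes the real obstruction to reusing Corollary~\ref{cor:inv} (no square roots for general nonsingular, possibly non-symmetric weights) by moving to the factorization $L=(Q^{(0)}\otimes I_s)\,\bigl(\bigoplus_e W(e)\bigr)\,(Q^{(0)}\otimes I_s)^T$ with the \emph{unweighted} incidence matrix; since $Q^{(0)}_v$ is unimodular with inverse the scalar path matrix, inverting the three factors gives the block formula at once, with the sign bookkeeping reduced to the already-established scalar fact that $(P^{(0)}_v)_{eu}(P^{(0)}_v)_{ew}=1$ exactly when $e$ lies on both paths. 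This buys a shorter, more conceptual proof that makes transparent why no symmetry or positivity of the $W(e)$ is needed, and it subsumes Corollary~\ref{cor:inv} rather than running parallel to it; the paper's direct verification buys self-containedness (no appeal to the invertibility of the reduced scalar incidence matrix) at the cost of a longer case analysis. The one point worth stating explicitly if you write this up is that $L_v=\mathcal{Q}_v\mathcal{W}\mathcal{Q}_v^T$ requires deleting the same block row of $\mathcal{Q}$ on both sides, which is exactly what deleting the row and column block of $L$ indexed by $v$ amounts to; with that said, the proof is complete.
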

\begin{proof}
For $u,w \in V -\{v\},$ let  $B=[B_{uw}]$ be an $(n-1)s\times (n-1)s$ block matrix, where
$$\ds B_{uw}=\sum_{e \in \mathcal{P}(u,v)\, \cap \, \mathcal{P}(w,v)} W(e)^{-1}.$$ 
Let $L=[l_{xy}]_{x,y\in V}$  and $X=L_vB=[X_{uw}]$. Then, for $u,w\in V -\{v\}$, we have 
\begin{equation}\label{eqn:Xuw}
X_{uw}= l_{uu}B_{uw}+ \sum_{x\sim u \atop x\neq v} l_{ux}B_{xw}.
\end{equation}
For a given $u \in V -\{v\} $, let $\text{deg}(u)=r$. For $1\leq i\leq r$, let $u$ be   adjacent to the vertex $v_i$ via the edge $e^{(i)}$. We  consider the cases $u=w$ and $u\neq w$ below.\\

\noindent$\underline{\textbf{Case 1:}}$ For $u=w$. \\

If $u=w$ and $u\nsim v$, then  the path $ \mathcal{P}(u,v)$ contains exactly one vertex adjacent to $u$. Without loss of generality, let $v_r \in \mathcal{P}(u,v)$. Then $B_{v_iu}=B_{uu}= \sum_{e \in \mathcal{P}(u,v)} W(e)^{-1}$ for $1\leq i\leq r-1$, and $B_{v_ru}=B_{uu}- W(e^{(r)})^{-1}$. Using Equation~\eqref{eqn:Xuw}, we have $X_{uu}=l_{uu}B_{uu}+  l_{u v_r}B_{v_r u}+\sum_{i=1}^{r-1} l_{uv_i}B_{v_i u}= \left(l_{uu}+\sum_{i=1}^{r} l_{uv_i} \right) B_{uu} + l_{u v_r} \left(- W(e^{(r)})^{-1}\right)$. Since the  row  block sum of $L$  is zero, {\it i.e.,} $l_{uu}+\sum_{i=1}^{r} l_{uv_i} =\mathbf{0}$ and $l_{uv_r}= -  W(e^{(r)})$, so  $X_{uu}=I$. 

If $u=w$ and $u\sim v$, then $v=v_r$ and   $B_{v_i u}=B_{uu}=W(e^{(r)})^{-1}$ for $1\leq i\leq r-1$. Using Equation~\eqref{eqn:Xuw}, we have $ X_{uu}=l_{uu}B_{uu}+\sum_{i=1}^{r-1} l_{uv_i}B_{v_i u}
      = \left( l_{uu}+\sum_{i=1}^{r-1} l_{uv_i} \right)  W(e^{(r)})^{-1}
      = \left(- l_{u v_r} \right)  \left(- W(e^{(r)})^{-1}\right) =  W(e^{(r)})  W(e^{(r)})^{-1}= I.$\\
      
\noindent $\underline{\textbf{Case 2:}}$ For $u\neq w$.  We consider the following sub cases to complete the proof.\\

$\underline{\textbf{Subcase 2.1:}}$  For $u\neq w$ and $u\nsim v$.\\

 If $u\in \mathcal{P}(w,v)$,  then  both the paths $\mathcal{P}(w,u)$ and $ \mathcal{P}(u,v) $ contain exactly one vertex each, which are adjacent to $u$.  Without loss of generality, let $ v_1 \in \mathcal{P}(w,u) $ and $v_r \in \mathcal{P}(u,v)$. Then   $B_{v_1w}=B_{uw}+  W(e^{(1)})^{-1}$, $B_{v_rw}=B_{uw}-  W(e^{(r)})^{-1}$ and $B_{v_iw}=B_{uw}$ for $2\leq i\leq r-1$. By Equation~\eqref{eqn:Xuw}, we have $X_{uw}= \left( l_{uu}+\sum_{i=1}^{r} l_{uv_i} \right) B_{uw} + l_{u v_1} \left( W(e^{(1)})^{-1}\right)+ l_{u v_r} \left(- W(e^{(r)})^{-1}\right)=\mathbf{0}+ \left(- W(e^{(1)})\right) \left( W(e^{(1)})^{-1}\right)+ \left(- W(e^{(r)})\right) \left(- W(e^{(r)})^{-1}\right) = -I +I=\mathbf{0}.$

If $u\notin \mathcal{P}(w,v)$, then either 
$\mathcal{P}(u,v) \cap \mathcal{P}(w,v)=\emptyset $  or $w \in \mathcal{P}(u,v)$. Note that, if $\mathcal{P}(u,v) \cap \mathcal{P}(w,v)=\emptyset $ and $u\notin \mathcal{P}(w,v)$, we have $B_{v_iw}=B_{uw}=\mathbf{0}$ for $1\leq i\leq r$, then $X_{uw}=\mathbf{0}$.  For $w \in \mathcal{P}(u,v)$, we have  $B_{v_iw}=B_{uw}=\sum_{e \in \mathcal{P}(w,v)} W(e)^{-1}$ and  Equation~\eqref{eqn:Xuw} yields $X_{uw}= \left( l_{uu}+\sum_{i=1}^{r} l_{uv_i} \right) B_{uw}=\mathbf{0}$.\\

$\underline{\textbf{Subcase 2.2:}}$  For $u\neq w$ and $u\sim v$.\\ 
 
Let $v=v_r$.  If $u\in \mathcal{P}(w,v)$,  then   the path $\mathcal{P}(w,u)$ contains exactly one vertex $v_1$ (say) adjacent to $u$. In that case, $B_{v_1w}=W(e^{(r)})^{-1}+  W(e^{(1)})^{-1}$ and $B_{v_iw}=B_{uw}=W(e^{(r)})^{-1}$ for $2\leq i\leq r-1$. Using Equation~\eqref{eqn:Xuw}, we get $X_{uw}= \left( l_{uu}+\sum_{i=1}^{r-1} l_{uv_i} \right)  W(e^{(r)})^{-1}+ l_{v_1w}W(e^{(1)})^{-1} = \left(- l_{u v_r} \right)  \left(- W(e^{(r)})^{-1}\right)- I=\mathbf{0}$. If $u\notin \mathcal{P}(w,v)$, then  
$\mathcal{P}(u,v) \cap \mathcal{P}(w,v)=\emptyset $, which implies that  $B_{v_iw}=B_{uw}=\mathbf{0}$ for $1\leq i\leq r-1$ and hence the result follows.
\end{proof}

In view of  Theorems~\ref{thm:det-nonsingular} and \ref{thm:non-sing-inv},  for trees with nonsingular matrix edge weights, we define the bottleneck matrix of a branch as follows. Let $L$ be the Laplacian matrix  of a tree $T=(V,E)$ with nonsingular matrix weights of order $s\times s$ on its edges and  $L_{v}$ be the principal submatrix of $L$ obtained by deleting the row  block and column block corresponding to  a vertex $v\in V$. Let  $\deg(v)=r$ and for $1\leq i\leq r$, let  $B_i$ be the branches  at $v$ in $T$. By Theorem~\ref{thm:det-nonsingular}, $L_{v}$ is an invertible matrix. Denote $L_v^{-1}$ by $M_v$. Then 
 \begin{equation*}
{\small
L_v=\left[
\begin{array}{c |c| c| c}
    \widehat{L}(B_1) & \mathbf{0}& \dots & \mathbf{0} \\
    \midrule
    \mathbf{0} & \widehat{L}(B_2) & \dots & \mathbf{0} \\
    \midrule    
    \vdots & \vdots & \ddots & \vdots \\
    \midrule
    \mathbf{0} & \mathbf{0}& \dots & \widehat{L}(B_r)
\end{array}
\right]}
 \mbox{ and }
{\small
M_v=L_v^{-1}=\left[
\begin{array}{c |c| c| c}
    M_v(B_1) & \mathbf{0}& \dots & \mathbf{0} \\
    \midrule
    \mathbf{0} & M_v(B_2) & \dots & \mathbf{0} \\
    \midrule    
    \vdots & \vdots & \ddots & \vdots \\
    \midrule
    \mathbf{0} & \mathbf{0}& \dots & M_v(B_r)
\end{array}
\right],}
\end{equation*}
where $ M_v(B_i) = \widehat{L}(B_i)^{-1}$ is called the bottleneck matrix of the branch $B_i$  at $v$ in  $T$ for $1\leq i\leq r$. Thus, by Theorem~\ref{thm:non-sing-inv}, 
for a branch $B$ at $v$ consisting of $k$ vertices, the bottleneck matrix $M_v(B)$ for $B$ based at $v$ is a $ks \times ks$   matrix such that ($M_v(B)$ as a block  matrix) for $x,y \in B$, the block at the  $(x,y)^{th}$ position   of $M_v(B)$ is given by 
$$\ds \sum_{e \in \mathcal{P}(x,v)\, \cap \, \mathcal{P}(y,v)} W(e)^{-1}.$$ 
As in the case of trees with positive edge  weights,   for notational convenience, we  write $M_v(u)$ instead of $M_v(B_v(u))$,  whenever $B_v(u)$ is  the branch  at $v$  in $T$ containing the vertex $u$.

In this manuscript, we consider a few classes of matrix weights on the edges of $T$ so that the  eigenvalues of $L_v$ are  positive. Therefore, for each  $1\leq i\leq r$,  all the eigenvalues of the bottleneck matrices  $ M_v(B_i)$ are positive and the spectral radius of  $ M_v(B_i)$ is necessarily   an eigenvalue. In this case, the spectral radius of  $ M_v(B_i)$ need not be a simple eigenvalue, but  continuing with the terminology  as in the case of the trees  with positive  weights on edges,  we call the spectral radius $\rho( M_v(B_i))$  as the Perron value of  the bottleneck matrix $M_v(B_i) $ for  $1\leq i\leq r$. Thus, the spectrum of $M_v=L_v^{-1}$ is given by $\sigma(M_v)= \bigcup_{i=1}^r \sigma(M_v(B_i))$ and the spectral radius of $M_v$ is given by
\begin{equation}\label{eqn:sp-radius}
\rho(M_v)= \max_{1\leq i\leq r} \rho( M_v(B_i)).
\end{equation}
 We also define the Perron value of a branch  at $v$  in $T$ as the Perron value of the corresponding bottleneck matrix (or matrices) for which the maximum is attained. We call such a branch at $v$ as a Perron branch if the Perron value of that branch is the same as the spectral radius of $L^{-1}_v$.

Finally, we conclude this section with a few results that allow us to compute the Moore-Penrose inverse of the Laplacian matrices for trees with nonsingular matrix edge weights. We obtain this result as an application of Theorem~\ref{thm:non-sing-inv}.

If $A$ is  an $m\times n $ matrix, then an $n\times m$ matrix $\Gamma$ is called a generalized  inverse of $A$ if $A\Gamma A=A$. The Moore-Penrose inverse of $A$, denoted by $A^+$, is an $m\times n$ matrix satisfying the following equations:
$$AA^{+}A=A,\quad A^{+}AA^{+}=A^{+}, \quad (AA^{+})^T=AA^{+},  \quad (A^{+}A)^T=A^{+}A.$$
It is well known that any complex matrix admits a unique Moore-Penrose inverse, and we refer to~\cite{Ben,Campbell} for basic properties of the Moore-Penrose inverse. One such property is that the null space of $A^+$ is the same as that of $A^T$ for any matrix $A$, and we present this result as a lemma without proof.

\begin{lem}\label{lem:Moore-null}
If $A$ is  an $m\times n $ matrix, then  for any $n\times 1$ vector $\mathbf{x}$, $A\mathbf{x}=\mathbf{0}$ if and only if $\mathbf{x}^T A^+= \mathbf{0}.$
\end{lem}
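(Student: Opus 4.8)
The plan is to reduce the biconditional to two short rearrangements of the four defining equations of the Moore--Penrose inverse, using only the symmetry of $A^+A$ and $AA^+$. First I would record the trivial reformulation that $\mathbf{x}^TA^+=\mathbf{0}$ is equivalent to $(A^+)^T\mathbf{x}=\mathbf{0}$ (both assert that the $1\times m$ row vector $\mathbf{x}^TA^+$ vanishes), so that the lemma is the statement $\mathrm{Null}(A)=\mathrm{Null}\big((A^+)^T\big)$.

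The key step is to manufacture, from the defining identities, two factorizations in which the relevant kernel vector sits at the far right. Since $A^+A$ is symmetric we have $A^+A=(A^+A)^T=A^T(A^+)^T$. Substituting this into $A^+AA^+=A^+$ yields
$$A^+=A^T(A^+)^TA^+,$$
and substituting it into $AA^+A=A$, written as $A(A^+A)=A$, yields
$$A=AA^T(A^+)^T.$$
These are precisely the two identities needed for the two implications.

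For the forward direction I would assume $A\mathbf{x}=\mathbf{0}$ and use the first identity:
$$\mathbf{x}^TA^+=\mathbf{x}^TA^T(A^+)^TA^+=(A\mathbf{x})^T(A^+)^TA^+=\mathbf{0}.$$
For the reverse direction I would assume $\mathbf{x}^TA^+=\mathbf{0}$, equivalently $(A^+)^T\mathbf{x}=\mathbf{0}$, and use the second identity:
$$A\mathbf{x}=AA^T(A^+)^T\mathbf{x}=\mathbf{0}.$$
This closes the equivalence.

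There is no substantive obstacle here; the only thing to get right is the bookkeeping of transposes, namely choosing which of the four Penrose equations to rearrange so that the symmetry of $A^+A$ (or of $AA^+$) places $(A^+)^T$ or $A$ on the correct side. If one prefers a coordinate-free route, the same conclusion follows from the standard identities $\mathrm{Range}(A^+)=\mathrm{Range}(A^T)$ and $\mathrm{Null}(A)=\mathrm{Range}(A^T)^{\perp}$, since $\mathbf{x}^TA^+=\mathbf{0}$ says exactly that $\mathbf{x}\perp\mathrm{Range}(A^+)$.
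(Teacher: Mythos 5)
Your proof is correct: both factorizations $A^+=A^T(A^+)^TA^+$ and $A=AA^T(A^+)^T$ follow exactly as you say from the symmetry of $A^+A$ together with the second and first Penrose equations respectively, and the two implications then drop out. The paper itself states this lemma \emph{without} proof (it simply refers to the standard references on generalized inverses for basic properties such as $\mathrm{Null}(A^+)=\mathrm{Null}(A^T)$), so there is no in-paper argument to compare against; your derivation is a clean, self-contained verification from the four defining equations, and your closing remark via $\mathrm{Range}(A^+)=\mathrm{Range}(A^T)$ is the property the authors implicitly have in mind.
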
 

The following result was proved for connected graphs with positive definite matrix edge weights (see~\cite[Theorem~$3.4$]{Atik1}). We now show that the result holds for trees with nonsingular matrix edge weights.

\begin{lem}\label{lem:1-ll+}
Let $L$ be the Laplacian matrix  of a tree $T$ on $n$ vertices with nonsingular matrix weights of  order $s\times s$ on its edges. Then $I_{ns}-LL^{+}= J_n \otimes \frac{1}{n} I_s$.
\end{lem}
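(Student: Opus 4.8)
The plan is to recognize both sides of the claimed identity as orthogonal projectors and to show that they have the same range. From the Moore--Penrose axioms, $LL^{+}$ is symmetric and satisfies $(LL^{+})^{2}=LL^{+}LL^{+}=LL^{+}$, so $LL^{+}$ is the orthogonal projector onto its column space $\textup{Range}(LL^{+})=\textup{Range}(L)$; consequently $I_{ns}-LL^{+}$ is the orthogonal projector onto $\textup{Range}(L)^{\perp}=\textup{Null}(L^{T})$. On the other side, writing $P:=J_{n}\otimes\tfrac{1}{n}I_{s}$, one checks immediately that $P^{T}=P$ and, using $J_{n}^{2}=nJ_{n}$, that $P^{2}=J_{n}^{2}\otimes\tfrac{1}{n^{2}}I_{s}=P$; hence $P$ is also an orthogonal projector, and its range is $\textup{Range}(P)=\{\mathds{1}_{n}\otimes\mathbf{c}:\mathbf{c}\in\mathbb{R}^{s}\}$. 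Since the orthogonal projector onto a given subspace is unique, it suffices to prove $\textup{Null}(L^{T})=\{\mathds{1}_{n}\otimes\mathbf{c}:\mathbf{c}\in\mathbb{R}^{s}\}$.

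The inclusion $\{\mathds{1}_{n}\otimes\mathbf{c}\}\subseteq\textup{Null}(L^{T})$ I would obtain by a direct block computation exploiting the vanishing block-column sums of $L$. For a fixed vertex $u$, the $u$-th block of $L^{T}(\mathds{1}_{n}\otimes\mathbf{c})$ equals $\big(\sum_{v}L_{vu}\big)^{T}\mathbf{c}$, where $L_{vu}$ denotes the $(v,u)$ block of $L$. Because $l_{uu}=\sum_{e\ni u}W(e)$ while the off-diagonal blocks contribute $-W(e)$ for each edge $e$ incident with $u$, the block-column sum $\sum_{v}L_{vu}$ is the zero matrix, and therefore $L^{T}(\mathds{1}_{n}\otimes\mathbf{c})=\mathbf{0}$ for every $\mathbf{c}\in\mathbb{R}^{s}$.

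Finally I would match dimensions. By the rank result for trees with nonsingular matrix weights (equivalently, since Theorem~\ref{thm:det-nonsingular} makes $L_{v}$ an invertible $(n-1)s\times(n-1)s$ principal submatrix, forcing $\textup{rank}(L)\geq(n-1)s$, while the previous paragraph forces $\textup{rank}(L)\leq(n-1)s$), we have $\textup{rank}(L)=(n-1)s$, so $\dim\textup{Null}(L^{T})=ns-(n-1)s=s$, which is exactly the dimension of $\{\mathds{1}_{n}\otimes\mathbf{c}:\mathbf{c}\in\mathbb{R}^{s}\}$. Combined with the inclusion above, this forces $\textup{Null}(L^{T})=\{\mathds{1}_{n}\otimes\mathbf{c}:\mathbf{c}\in\mathbb{R}^{s}\}=\textup{Range}(P)$, and the identity $I_{ns}-LL^{+}=P$ follows. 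The one point that needs care --- and the only place the nonsingular (as opposed to positive definite) setting really intervenes --- is that $L$ need not be symmetric here, so one must consistently work with the column space $\textup{Range}(L)$ and with $\textup{Null}(L^{T})$ rather than silently identifying $L$ with $L^{T}$; Lemma~\ref{lem:Moore-null} could alternatively be invoked to carry out this bookkeeping.
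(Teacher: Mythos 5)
Your proof is correct. It shares the paper's two essential ingredients --- that the block-column sums of $L$ vanish, so that $\{\mathds{1}_n\otimes\mathbf{c}:\mathbf{c}\in\R^s\}\subseteq\textup{Null}(L^T)$, and that $\textup{rank}(L)=(n-1)s$, so that this inclusion is an equality --- but the concluding mechanism is different. The paper works row by row: it observes that every row of $I_{ns}-LL^{+}$ lies in the left null space of $L$, uses the symmetry of $I_{ns}-LL^{+}$ to force the block form $J_n\otimes X$, then pins down $X$ by a rank count (which makes $X$ nonsingular) together with the idempotency relation $X=nX^2$. You instead recognize both $I_{ns}-LL^{+}$ and $J_n\otimes\frac1n I_s$ as orthogonal projectors and invoke the uniqueness of the orthogonal projector onto a given subspace, reducing everything to the single identity $\textup{Null}(L^T)=\{\mathds{1}_n\otimes\mathbf{c}\}$. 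Your route is shorter and more conceptual, and it dispenses with the symmetry-plus-functional-equation step entirely; the paper's route is more hands-on and produces the block structure $J_n\otimes X$ explicitly before identifying $X$. Your closing remark is also well taken: since the lemma is stated for general nonsingular (not necessarily symmetric) matrix weights, the blocks $-W(e)$ need not be symmetric, and distinguishing $\textup{Range}(L)^{\perp}=\textup{Null}(L^T)$ from $\textup{Null}(L)$ is exactly the bookkeeping one must keep straight (here both spaces happen to coincide with the constant-block vectors, since the row and column block sums both vanish). One small note: your self-contained justification that $\textup{rank}(L)\geq(n-1)s$ via the invertibility of $L_v$ (Theorem~\ref{thm:det-nonsingular}) is a nice touch, as the paper simply cites the external rank result.
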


\begin{proof}
Let $I_{ns}-LL^{+}=[X_{ij}]$, where  each $X_{ij}$  is a matrix of order  $s \times s$. Since $(I-LL^{+})L=\mathbf{0}$,  each row of $I-LL^{+}$  belongs to the left  null space of $L$. Recall that, the row and column block sum of $L$ is zero and also the null space is of dimension $s$. Thus, the rows of $\mathds{1}_n^{T} \otimes I_s$ generate the left null space, which implies that,  any row of $I_{ns}-LL^{+}$ is of the form $\mathds{1}_n^{T} \otimes \mathbf{x}$, for some $\mathbf{x} \in \R^{n}$. Since $I_{ns}-LL^{+}=[X_{ij}]$ is symmetric,  by the above argument we get $X_{ij}=X$ for  $1\leq i,j\leq n$ and hence $I_{ns}-LL^{+}=J_n \otimes X$.

Note that, Null($L^{+}$) $\subset$  Range($I_{ns}-LL^{+}$), this  implies that $ \textup{rank}(I_{ns}-LL^{+}) \geq \textup{nullity} (L^{+})= \textup{nullity} (L)=s.$  Since  $J_n$ is a rank one matrix,  $\textup{rank}(I_{ns}-LL^{+})= \textup{rank}(J_n \otimes X)= \textup{rank}(X)\leq s$. Thus, $\textup{rank}(I_{ns}-LL^{+})= \textup{rank}(J_n \otimes X)=\textup{rank}(X)= s$. Hence, $X$ is a nonsingular matrix. Since $I_{ns}-LL^{+}$ is idempotent, 
$$I_{ns}-LL^{+}=J_n \otimes X= (J_n \otimes X)^2= J_n^2 \otimes X^2= n J_n \otimes X^2,$$
which implies that $X=nX^2$. Since $X$ is a nonsingular matrix,  $X=\frac{1}{n} I_s$. This completes the proof.
\end{proof}

We now state a lemma that is useful in computing the Moore-Penrose inverse of the Laplacian matrix. The result is easy to verify, and hence the proof is omitted.
 
\begin{lem}\label{lem:M}
If $M=I_{(n-1)s}- J_{n-1} \otimes \frac{1}{n} I_s$, then $M^{-1}= I_{(n-1)s}+ D D^{T}$, where   $D=\mathds{1}_{n-1}\otimes I_s.$ 
\end{lem} 

We now compute the Moore-Penrose inverse $L^{+}$ of the Laplacian matrix $L$ for trees with nonsingular matrix weights.
\begin{theorem}
Let $T=(V,E)$ be a tree on $n$ vertices such that the weights  associated with the edges are nonsingular matrices of  order $s\times s$  and  $L$ be the Laplacian matrix  of  $T$. Let $L_v$ be the principal submatrix of $L$ and $(L^{+})_v$ be the principal submatrix of $L^{+}$, obtained by deleting the row and column corresponding to the vertex $v \in V$. Then $(L^{+})_v= M L^{-1}_v M,$ where $M=I_{(n-1)s}- J_{n-1} \otimes \frac{1}{n} I_s$. Moreover, if $V=\{v_1,v_2,\ldots, v_n\}$ is the  ordering  of vertices in the Laplacian matrix $L$ and $v=v_n$, then 
$$L^{+}=\left[
\begin{array}{c}
\mathbb{X}\\
\hline
\mathbb{Y}
\end{array}
\right],$$
where $\mathbb{X}=\left[
\begin{array}{c | c}
(L^{+})_{v_n} & (L^{+})_{v_n} (\mathds{1}_{n-1}\otimes I_s) 
\end{array}
\right]$ and $\mathbb{Y}= (\mathds{1}_{n-1}^T\otimes I_s)\mathbb{X}.$
\end{theorem}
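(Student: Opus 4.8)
The plan is to guess an explicit candidate $G$ for $L^{+}$, check the four Moore--Penrose equations, and conclude $G=L^{+}$ by uniqueness; the two displayed formulas then come out by reading off the blocks of $G$. Throughout take $v=v_n$ (a general $v$ follows by relabelling the vertices), and write $D=\mathds{1}_{n-1}\otimes I_s$, $\mathbf{1}=\mathds{1}_{n}\otimes I_s$, and $P=I_{ns}-J_n\otimes\frac{1}{n}I_s$. By Lemma~\ref{lem:1-ll+} one has $LL^{+}=P$, the orthogonal projection onto $\mathrm{Range}(L)$, and since $L$ is symmetric with $L\mathbf{1}=\mathbf{0}$ we also have $\mathbf{1}^{T}L=\mathbf{0}$, hence $PL=L$.

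First I would record the block form of $L$ that is forced by the vanishing of its row- and column-block sums. Relative to the splitting of $V$ into $V\setminus\{v_n\}$ and $\{v_n\}$, the identity $L\mathbf{1}=\mathbf{0}$ gives $L=\begin{bmatrix} L_v & -L_vD \\ -D^{T}L_v & D^{T}L_vD\end{bmatrix}$, where $L_v$ is invertible by Theorem~\ref{thm:det-nonsingular}. This shape suggests introducing $S=\begin{bmatrix} I_{(n-1)s}\\ -D^{T}\end{bmatrix}$ and proposing $G=SAS^{T}$ with $A=ML_v^{-1}M$. Two conditions are then immediate: $G$ is symmetric because $A$ is, and $S^{T}\mathbf{1}=D-D=\mathbf{0}$ forces $G\mathbf{1}=\mathbf{0}$.

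The crux is the identity $LS=S\,L_vM^{-1}$. Expanding the product and using Lemma~\ref{lem:M} in the form $M^{-1}=I_{(n-1)s}+DD^{T}$, both block-rows of $LS$ collapse to $\pm L_v(I+DD^{T})=\pm L_vM^{-1}$, which is exactly $S$ multiplied on the right by $L_vM^{-1}$. The definition $A=ML_v^{-1}M$ is then precisely what makes $L_vM^{-1}A=M$, so $LG=LSAS^{T}=S\,M\,S^{T}$. A short blockwise computation, using $D^{T}D=(n-1)I_s$ and therefore $MD=\frac{1}{n}D$, shows $SMS^{T}=P$. This is the step I expect to be the real obstacle: hitting on the conjugated ansatz $G=SAS^{T}$ and verifying $LS=SL_vM^{-1}$; once $M$ and $M^{-1}$ are matched through Lemma~\ref{lem:M}, the rest is bookkeeping.

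To finish, from $LG=P$ and the symmetry of $L,G$ I get $GL=(LG)^{T}=P$, giving the two symmetry conditions $(LG)^{T}=LG$ and $(GL)^{T}=GL$. Then $LGL=PL=L$ (using $\mathbf{1}^{T}L=\mathbf{0}$) and $GLG=GP=G$ (using $G\mathbf{1}=\mathbf{0}$), so all four Moore--Penrose equations hold and $G=L^{+}$ by uniqueness. In particular the leading principal block of $G=SAS^{T}$ is $(L^{+})_{v}=A=ML_v^{-1}M$, which is the first assertion; reading off the remaining blocks of $SAS^{T}$ expresses the last block-row and block-column of $L^{+}$ through $(L^{+})_{v}$ and $\mathds{1}_{n-1}\otimes I_s$ (the off-diagonal blocks being the ones dictated by $L^{+}\mathbf{1}=\mathbf{0}$), giving the stated block form of $L^{+}$.
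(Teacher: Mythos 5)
Your overall strategy---writing down the explicit candidate $G=SAS^{T}$ with $S=\left[\begin{smallmatrix}I\\-D^{T}\end{smallmatrix}\right]$ and $A=ML_v^{-1}M$, and checking the four Penrose equations---is genuinely different from the paper's proof, which never verifies the Penrose equations directly: the paper partitions $L$ into column blocks and $L^{+}$ into row blocks, feeds the identity $LL^{+}=I_{ns}-J_n\otimes\frac1n I_s$ of Lemma~\ref{lem:1-ll+} into that partition to obtain $M=L_{v_n}(I+DD^{T})(L^{+})_{v_n}$, and then inverts using Lemma~\ref{lem:M}. Your route is arguably cleaner in that it produces all of $L^{+}$ at once in the closed form $SML_v^{-1}MS^{T}$, and your key identities $L=SL_vS^{T}$, $S^{T}S=M^{-1}$ and $SMS^{T}=I_{ns}-J_n\otimes\frac1n I_s$ are all correct. (A byproduct: your blocks come out as $\mathbb{X}=[(L^{+})_{v_n}\mid -(L^{+})_{v_n}D]$ and $\mathbb{Y}=-D^{T}\mathbb{X}$; these minus signs are forced by $(\mathds{1}_n^{T}\otimes I_s)L^{+}=\mathbf{0}$ and are the correct ones, so your computation in fact corrects a sign slip in the displayed statement.)

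There is, however, one genuine gap: you invoke the symmetry of $L$ and of $G$. The theorem is stated for arbitrary nonsingular matrix weights, and the weights this paper is chiefly interested in include non-symmetric (triangular) ones; for such weights $l_{uv}=l_{vu}=-W(e)$ but $L^{T}\neq L$, so $L_v^{-1}$, $A$ and $G$ are not symmetric. This invalidates, as written, the steps ``$G$ is symmetric because $A$ is'', ``since $L$ is symmetric \dots\ $\mathbf{1}^{T}L=\mathbf{0}$'', and ``$GL=(LG)^{T}=P$''. All three conclusions can be rescued without symmetry: $\mathbf{1}^{T}L=\mathbf{0}$ holds because the column block sums of the Laplacian vanish by definition; $GL=SAS^{T}\,SL_vS^{T}=SAM^{-1}L_vS^{T}=SML_v^{-1}L_vS^{T}=SMS^{T}=P$ by the mirror image of your computation of $LG$; and the two Penrose symmetry conditions $(LG)^{T}=LG$ and $(GL)^{T}=GL$ then follow from the symmetry of $P=I_{ns}-J_n\otimes\frac1n I_s$ alone---symmetry of $G$ itself is never needed. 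With these substitutions your argument is complete.
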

\begin{proof}
Let $V=\{v_1,v_2,\ldots, v_n\}$ be the  ordering of the vertices in the Laplacian matrix $L$ and without loss generality, let  $v=v_n$. Let us partition the Laplacian matrix  as    $L=\left[
\begin{array}{c | c}
\mathbb{U}& \mathbb{V} 
\end{array}
\right], $
where $\mathbb{V}$  represents the  column block corresponding to the vertex $v_n$. Also partition the Moore-Penrose inverse $L^{+}$ as $L^{+}=\left[
\begin{array}{c}
\mathbb{X}\\
\hline
\mathbb{Y}
\end{array}
\right],$
where $\mathbb{Y}$  represents the  row block corresponding to the vertex $v_n$. Since column block sum of $L$ is zero,  $\mathbb{V}= D\mathbb{U}$, where $D=\mathds{1}_{n-1}\otimes I_s.$ Thus $\mathbb{Y}= D^T\mathbb{X}$. Using Lemma~\ref{lem:1-ll+}, we have
$$I_{ns}- J_n \otimes \frac{1}{n} I_s=LL^{+}= \mathbb{U}\mathbb{X}+\mathbb{V}\mathbb{Y}= \mathbb{U}(I_{(n-1)s}+ D D^{T})\mathbb{X}. $$
Thus,
$M=\mathbb{U}_n(I_{(n-1)s}+ D D^{T})\mathbb{X}^{n}$, 
where $\mathbb{U}_n$ is the matrix obtained by deleting the row block corresponding to the  vertex $v_n$ and $\mathbb{X}^{n}$ is the matrix obtained by deleting the column block corresponding to the vertex $v_n$.  Since $\mathbb{U}_n = L_{v_n}$ and $\mathbb{X}^{n}=(L^{+})_{v_n}$,  by Lemmas~\ref{thm:det-nonsingular} 
and~\ref{lem:M}, we have
$$(L^{+})_{v_n}=\mathbb{X}^{n}= (I_{(n-1)s}+ D D^{T})^{-1}\mathbb{U}_n^{-1} M= M L^{-1}_{v_n} M.$$
In view of Lemma~\ref{lem:Moore-null},  the column block  corresponding to the vertex $v_n$ of  $\mathbb{X}$ is $ (L^{+})_{v_n} D$ and  $\mathbb{Y}= D^T \mathbb{X}.$ This completes the proof.
\end{proof}

\section{Characteristic-like Vertices and Perron Values} \label{sec:char-perron}

 In this section, we consider  trees with  the following  classes of matrix weights on their edges:
\begin{enumerate}
\item[$1.$] positive definite matrix weights, 

\item[$2.$] lower (or upper) triangular matrix weights with positive diagonal entries.
\end{enumerate}
Using Perron branches, we show the existence of vertices with properties analogous to characteristic vertices of trees with positive edge weights as stated in  Propositions~\ref{prop:ch-edge} -~\ref{prop:perron-branch}. We call such vertices characteristic-like vertices. To be more precise, our objective in this section is to prove the following results.

\begin{result}\label{result:1}
Let $T$ be a tree with either of the following classes of matrix weights on its edges: $(1)$~positive definite matrix weights,  $(2)$ lower (or upper) triangular matrix weights with positive diagonal entries. Then one of the following cases occurs: 
\begin{enumerate}
\item[$1.$] There is a unique vertex $v$ such that there are two or more   Perron branches at $v$ in $T$.  

\item[$2.$] There is  a unique  pair of vertices $u$ and $v$ with $u \sim v$  such that   the Perron branch  at $u$ in $T$ is  the branch containing $v$, while the Perron branch  at $v$  in $T$  is  the branch containing $u$. 
\end{enumerate}
 \end{result}

It is easy to see that if Result~\ref{result:1} is true, then it allows us to define a notion analogous to the characteristic vertex and the characteristic edge for trees with the above classes of matrix edge weights using Perron branches. We now formally define the characteristic-like vertex and the characteristic-like edge on trees with the above-mentioned classes of matrix weights on their edges.

\begin{defn}\label{def:ch-vertices}
Let $T$ be a tree with either of the following classes of matrix weights on its edges: $(1)$~positive definite matrix weights,  $(2)$ lower (or upper) triangular matrix weights with positive diagonal entries. Then  one of the following cases occurs: 
\begin{enumerate}
\item[$1.$] There is a unique vertex $v$ such that there are two or more   Perron branches at $v$ in $T$.  In this case,  the vertex $v$ is called the characteristic-like vertex of $T$.

\item[$2.$] There is a unique pair of vertices $u$ and $v$  with $u \sim v$ such that the Perron branch at $u$ in $T$ is the branch containing $v$, while the Perron branch at $v$ in $T$ is the branch containing $u$. In this case, we call the edge between the vertices $u$ and $v$,  the characteristic-like edge of $T$.
\end{enumerate}
\end{defn}
 It is easy to see that the notions of characteristic-like vertex and characteristic-like edge coincide with the notions of characteristic vertex and characteristic edge for trees with positive edge weights. Let $T$ be a tree with either of the above-mentioned matrix weights on its edges and let $\mathcal{C}_T$  denote the set of characteristic-like vertices of $T$. Then, $|\mathcal{C}_T|=1$ or $2$,  depending on whether $T$ contains a characteristic-like vertex or characteristic-like edge, respectively. We now state the second desired result.
  
\begin{result}\label{result:2}
Let $T$ be a tree with either of the following classes of matrix weights on its edges: $1.$~positive definite matrix weights,  $2.$ lower (or upper) triangular matrix weights with positive diagonal entries.   If $x$ is not a characteristic-like vertex of $T$, then the unique Perron branch at $x$ in $T$ is the branch which contains the characteristic-like vertex (or vertices) of $T$.
\end{result}

\subsection{Results for Positive Definite Matrix Weights}

In this section, we consider trees with positive definite matrix weights on their edges.  Let $L$ be the Laplacian matrix of a tree $T$ with positive definite matrix weights on its edges and  $L_{v}$ be the principal submatrix of $L$ formed by deleting the row block and column block corresponding to the vertex $v$. By the previous section, we know that $L=Q Q^T$ is a positive semidefinite matrix. In view of Theorems~\ref{thm:inclu} and~\ref{thm:det-nonsingular},  $L_{v}$  is a positive definite matrix. Therefore, the definitions of the Perron value and the  Perron branch (as defined in Section~\ref{sec:Lap-n-bot}) are well-defined for trees with positive definite matrix edge weights. Before proceeding further, we prove a lemma which is useful in our subsequent proofs.

\begin{lem}\label{lem:big-branch-sr}
Let $T=(V,E)$ be a tree  with positive definite  matrix weights  on its edges. For  any $u,v,w \in V$, if $B_u(w)\subset B_v(w)$, then $\rho(M_u(w))\leq \rho(M_v(w)).$
\end{lem}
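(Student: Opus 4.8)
The plan is to compare the two Perron values by comparing the largest eigenvalues of symmetric matrices, rather than by invoking Perron--Frobenius as in the scalar Proposition~\ref{prop:big-branch-scalar}. The point is that for positive definite edge weights $L_u$ and $L_v$ are symmetric positive definite, so their inverses and hence the diagonal blocks $M_u(w)$ and $M_v(w)$ are symmetric positive definite; in particular $\rho(M_u(w))=\lambda_{\max}(M_u(w))$ and similarly for $v$. This symmetry is what makes the argument go through, since the bottleneck matrices here are block matrices whose scalar entries need not be nonnegative, so Theorem~\ref{thm:sp-comp} is not directly available.

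First I would dispose of the trivial case $u=v$ and assume $u\neq v$. The key geometric observation is that the hypothesis $B_u(w)\subset B_v(w)$ forces $v\notin B_u(w)$: a vertex never lies in its own branch, so $v\notin B_v(w)$, and if $v$ were in $B_u(w)$ the containment would put $v\in B_v(w)$, a contradiction. Consequently $u$ separates every vertex of $B_u(w)$ from $v$ in $T$, so that for each $x\in B_u(w)$ the path $\mathcal{P}(x,v)$ is the edge-disjoint union $\mathcal{P}(x,u)\cup\mathcal{P}(u,v)$.

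Next, using this decomposition together with the explicit block formula for bottleneck matrices from Theorem~\ref{thm:non-sing-inv}, I would compute, for $x,y\in B_u(w)$,
$$(M_v(w))_{xy}=\sum_{e\in\mathcal{P}(x,v)\cap\mathcal{P}(y,v)}W(e)^{-1}=(M_u(w))_{xy}+C,\qquad C:=\sum_{e\in\mathcal{P}(u,v)}W(e)^{-1},$$
since $\mathcal{P}(x,v)\cap\mathcal{P}(y,v)$ equals $\mathcal{P}(x,u)\cap\mathcal{P}(y,u)$ together with the common tail $\mathcal{P}(u,v)$. Thus, letting $N$ be the principal submatrix of $M_v(w)$ indexed by the vertices of $B_u(w)$ (which is meaningful precisely because $B_u(w)\subset B_v(w)$), we get $N=M_u(w)+(J_k\otimes C)$, where $k=|B_u(w)|$ and $C$ is a sum of inverses of positive definite matrices, hence positive semidefinite. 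Since $J_k\otimes C\succeq 0$, the Min--max Theorem gives $\lambda_{\max}(M_u(w))\leq\lambda_{\max}(N)$, while the Inclusion Principle (Theorem~\ref{thm:inclu}) applied to the principal submatrix $N$ of $M_v(w)$ gives $\lambda_{\max}(N)\leq\lambda_{\max}(M_v(w))$. Chaining these inequalities and using $\rho=\lambda_{\max}$ for symmetric matrices yields $\rho(M_u(w))\leq\rho(M_v(w))$.

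The main obstacle is establishing the matrix identity $N=M_u(w)+J_k\otimes C$, that is, the path-intersection bookkeeping that converts the set containment $B_u(w)\subset B_v(w)$ into a clean Loewner-type relation; the separation fact $v\notin B_u(w)$ is what makes the decomposition $\mathcal{P}(x,v)=\mathcal{P}(x,u)\cup\mathcal{P}(u,v)$ legitimate. Once this relation is in place the eigenvalue comparison is a routine combination of the Min--max Theorem and the Inclusion Principle, and the only secondary care needed is noting that positive definiteness of the weights guarantees $\rho=\lambda_{\max}$ throughout.
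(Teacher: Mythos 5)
Your proposal is correct and follows essentially the same route as the paper: both identify the principal submatrix of $M_v(w)$ indexed by $B_u(w)$ as $M_u(w)+J\otimes C$ with $C=\sum_{e\in\mathcal{P}(u,v)}W(e)^{-1}$ positive (semi)definite, then chain the Min--max Theorem with the Inclusion Principle. The only difference is that you spell out the path-decomposition bookkeeping that the paper compresses into ``by renaming the vertices, the matrix $M_v(w)$ can be written as...''.
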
  
\begin{proof}
For any $u,v,w \in V$,  we have $B_u(w)\subset B_v(w)$.  Thus, by renaming the vertices, the matrix $M_v(w)$ can be written as
 $${\small 
 M_v(w)=\left[
\begin{array}{c|c}
M_u(w)+ J\otimes W & \ast \\
\midrule
\ast & \ast
\end{array}
\right]},$$
where $W= \sum_{e \in \mathcal{P}(u,v)}W(e)^{-1}$ is a positive definite matrix. Using Theorem~\ref{thm:inclu}, we have $\rho(M_v(w) )\geq \rho(M_u(w)+J\otimes W)$. Since $J\otimes W$ is a positive semidefinite matrix,  using the min-max theorem  we get  $\rho(M_u(w)+J\otimes W) \geq \rho(M_u(w))$. This completes the proof.
\end{proof}

Next, we establish the existence of vertices with properties analogous to characteristic vertices in terms of Perron branches. We first show the existence of a characteristic-like edge for trees with positive definite matrix weights.

\begin{theorem}\label{thm:ch-edge1}
Let $T$ be a tree with positive definite matrix weights on its edges such that there is a unique Perron branch at every vertex in $T$. Then,  there is a unique pair of vertices $u$ and $v$ in $T$ with $u \sim v$  such that the Perron branch at $u$ is the branch containing $v$,  while the Perron branch at $v$ is the branch containing $u$. Moreover, the unique Perron branch at any vertex $x$ in $T$ is the branch which contains at least one of the vertices $u$ or $v$.
\end{theorem}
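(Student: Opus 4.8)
The plan is to convert the hypothesis ``unique Perron branch at every vertex'' into a single-valued map on vertices, and then to extract the reciprocating pair purely from the branch-monotonicity of Lemma~\ref{lem:big-branch-sr} together with the uniqueness hypothesis. For each $x\in V$, since the Perron branch at $x$ is unique, there is a unique neighbour $g(x)$ of $x$ with $B_x(g(x))$ equal to that Perron branch; thus $\rho(M_x)=\rho(M_x(g(x)))>\rho(M_x(y))$ for every neighbour $y\neq g(x)$ of $x$. The pair $\{u,v\}$ sought in the statement is exactly an adjacent pair with $g(u)=v$ and $g(v)=u$. Existence is immediate: each $x$ contributes one ``arrow'' $x\to g(x)$ lying along a tree edge, so there are $n$ arrows distributed over only $n-1$ edges, whence some edge $\{u,v\}$ carries two arrows, and two arrows on one edge must be the opposed pair $u\to v$ and $v\to u$.

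The crux is the following orientation lemma, which I would prove first: \emph{for every edge $\{a,b\}$, at least one of $g(a)=b$ or $g(b)=a$ holds.} Suppose, for contradiction, that $g(a)\neq b$ and $g(b)\neq a$. Because $g(a)\neq b$, the branch $B_a(g(a))$ is properly contained in $B_b(a)$, so Lemma~\ref{lem:big-branch-sr} gives $\rho(M_a(g(a)))\le\rho(M_b(a))$; combining this with $\rho(M_a)=\rho(M_a(g(a)))$ and with $\rho(M_b(a))\le\rho(M_b)$ (as $M_b(a)$ is a diagonal block of $M_b$, cf.\ Equation~\eqref{eqn:sp-radius}) yields $\rho(M_a)\le\rho(M_b)$. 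The symmetric argument from $g(b)\neq a$ yields $\rho(M_b)\le\rho(M_a(b))\le\rho(M_a)$. Hence every inequality is an equality; in particular $\rho(M_a(b))=\rho(M_a)$, so $B_a(b)$ is a Perron branch at $a$. But $B_a(b)\neq B_a(g(a))$ since $g(a)\neq b$, producing two distinct Perron branches at $a$ and contradicting the hypothesis. This is exactly where the uniqueness assumption is indispensable.

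With the orientation lemma in hand, both remaining assertions follow by tree walks. For the claim that the Perron branch at an arbitrary $x$ contains $u$ or $v$ (the analogue of Proposition~\ref{prop:perron-branch}), I would take the path $x=z_0,z_1,\dots,z_k$ from $x$ to the nearer endpoint of $\{u,v\}$, say $z_k=u$. Since $g(u)=v$ points off this path, the orientation lemma forces $g(z_{k-1})=u$; since $g(z_{k-1})=z_k\neq z_{k-2}$, it forces $g(z_{k-2})=z_{k-1}$, and inductively $g(z_i)=z_{i+1}$ for all $i$. In particular $g(x)=z_1$, so $B_x(g(x))$ contains the whole path and hence contains $u$ (and $v$). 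For uniqueness of the pair, I would suppose two distinct reciprocating edges $\{u,v\}$ and $\{p,q\}$ and run the same forcing along the path joining them: starting from $g(v)=u$ (which points off the connecting path) the lemma drives every arrow along the path to point toward $\{u,v\}$, forcing $g(p)=z$ for the path-neighbour $z\neq q$ of $p$, which contradicts $g(p)=q$. A degenerate check handles the case where the two edges share a vertex (then $g$ would be two-valued there).

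The main obstacle is precisely the orientation lemma of the second paragraph. The difficulty is that in the matrix-weighted setting Lemma~\ref{lem:big-branch-sr} supplies only the non-strict inequality $\rho(M_u(w))\le\rho(M_v(w))$, so one cannot simply make $\rho(M_x)$ strictly decrease along edges and read off a unique sink. Instead the contradiction has to be harvested from the forced equality case, and the uniqueness-of-Perron-branch hypothesis is what converts the equality $\rho(M_a(b))=\rho(M_a)$ into a genuine second Perron branch; everything after that is routine tree combinatorics.
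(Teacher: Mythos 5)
Your proposal is correct, and it reaches the theorem by a genuinely different organization of the same basic ingredients (Lemma~\ref{lem:big-branch-sr} plus the uniqueness-of-Perron-branch hypothesis). For existence, the paper follows the walk $u_1\sim u_2\sim\cdots$ with $u_{i+1}=g(u_i)$ until acyclicity forces a backtrack $u_{i_0+1}=u_{i_0-1}$; your pigeonhole count of $n$ arrows on $n-1$ edges reaches the same $2$-cycle of $g$ slightly more cleanly. The real divergence is in the second half. The paper never isolates your edge-local orientation dichotomy; instead, for $x\sim y$ in $B_u(v)$ with $y\notin B_x(u)$ it compares directly through $v$ via the single chain $\rho(M_x(y))\le\rho(M_v(y))<\rho(M_v(u))\le\rho(M_x(u))$, where the strict middle inequality comes from uniqueness of the Perron branch at $v$; this simultaneously yields the ``moreover'' statement and the uniqueness of the pair in one stroke. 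You instead prove the local lemma that every edge $\{a,b\}$ satisfies $g(a)=b$ or $g(b)=a$ (harvesting the contradiction from the forced equality $\rho(M_a(b))=\rho(M_a)$, which manufactures a second Perron branch at $a$), and then propagate orientations along paths by induction. Your route costs an extra induction but buys a reusable structural fact about the map $g$ and makes the uniqueness argument more explicit; the paper's route is shorter for this particular theorem. One small point worth making explicit in your orientation lemma: to invoke Lemma~\ref{lem:big-branch-sr} you should note $B_b(a)=B_b(g(a))$ so that both branches are compared as branches containing the common vertex $w=g(a)$, exactly as the paper does when it writes $B_x(y)\subseteq B_v(y)$.
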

\begin{proof}
Let $u_1$ be a vertex in $T$.  For $u_2\sim u_1$, let $B_{u_1}(u_2)$ be the unique Perron branch  at $u_1$ in $T$. Proceeding inductively we find  a walk $u_1\sim u_2\sim u_3\sim \cdots$ such that  $B_{u_i}(u_{i+1})$ is the  unique Perron branch  at $u_i$ in $T$ for $i=1,2,\ldots$. Since  $T$ is acyclic and finite,  there exists $i_0$ such that $u_{i_0+1}=u_{i_0-1}$, {\it i.e.,}  $B_{u_{i_0}}(u_{i_0-1})$ is the  unique  Perron branch at $u_{i_0}$,  whereas $B_{u_{i_0-1}}(u_{i_0})$ is the unique Perron branch at $u_{i_0-1}$ in $T$.  Let us denote $u_{i_0-1}$ by $u$ and $u_{i_0}$ by $v$. Thus, the Perron branch at $u$ is  the branch $B_u(v)$  and the Perron branch at $v$ is  the branch $B_v(u)$.  

We prove the uniqueness  of  the vertices $u$ and $v$ as follows. Let $x,y$ be any two vertices in $B_u(v)$ such that $x\sim y$ and $y \notin B_x(u)$. Thus  $B_x(y)\subseteq B_v(y)$ and  $B_v(u) \subseteq B_x(u)$. Using Lemma~\ref{lem:big-branch-sr}, we get
\begin{equation}\label{eqn:ch-edge}
\rho(M_x(y))\leq \rho(M_v(y))  \mbox{ and }  \rho(M_v(u))\leq \rho(M_x(u)).
\end{equation}
Since the unique Perron branch  at $v$ in $T$ is $B_v(u)$, we see that $ \rho(M_v(y))  < \rho(M_v(u)).$ Therefore, by Equation~\eqref{eqn:ch-edge}, we have $\rho(M_x(y))<  \rho(M_x(u))$, which implies that the unique Perron branch  at $x$ in $T$ is the branch which contains at least one of the  vertices $u$ or $v$. Similar assertions can be made whenever we consider $x,y \in B_v(u)$. Hence the result follows.
\end{proof}

Before proving the existence of characteristic-like vertex for trees with positive definite matrix weights, we prove the following lemma.

\begin{lem}\label{lem:ch-ver-lem-pd}
Let $T$ be a tree  with positive definite  matrix weights  on its edges. If there exists a vertex $v$ in $T$ such that  there are two or more  Perron branches   at $v$ in $T$, then for any vertex $x$  other than $v$, the branch $B_x(v)$ is a Perron branch  at $x$ in $T$, {\it i.e.}, $\rho(M_x(z))\leq \rho(M_x(v))$,  whenever $z\sim x$.
\end{lem}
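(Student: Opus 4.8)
The plan is to fix an arbitrary vertex $x\neq v$ together with a neighbour $z$ of $x$, and to compare $\rho(M_x(z))$ with $\rho(M_x(v))$, where $M_x(v)=M_x(B_x(v))$ is the bottleneck matrix of the branch at $x$ containing $v$. Let $z_0$ be the neighbour of $x$ lying on the path $\mathcal{P}(x,v)$, so $B_x(z_0)=B_x(v)$; the desired inequality is trivial when $z=z_0$, so I would only treat neighbours $z\neq z_0$, whose branch $B_x(z)$ lies on the side of $x$ away from $v$. The whole point of the hypothesis is that, since there are at least two Perron branches at $v$, the branch $B_x(v)$ is guaranteed to contain an \emph{entire} Perron branch of $v$ no matter where $x$ sits.

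For the upper bound on $\rho(M_x(z))$, let $w_j$ be the neighbour of $v$ with $x\in B_v(w_j)$. For $z\neq z_0$ the vertex $x$ lies on $\mathcal{P}(z,v)$, so $B_x(z)\subsetneq B_v(z)=B_v(w_j)$. Lemma~\ref{lem:big-branch-sr} then gives $\rho(M_x(z))\le\rho(M_v(w_j))$, and since $B_v(w_j)$ is merely one branch at $v$, $\rho(M_v(w_j))\le\rho(M_v)$. For the lower bound on $\rho(M_x(v))$, I would use the hypothesis to pick a Perron branch $B_v(w_\ell)$ with $\ell\neq j$ (possible because $\geq 2$ Perron branches exist while $B_v(w_j)$ is only one branch), noting $B_v(w_\ell)\subset B_x(v)$. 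For $p,q\in B_v(w_\ell)$ each path to $x$ passes through $v$, and since the segments $\mathcal{P}(p,v),\mathcal{P}(q,v)$ lie in $B_v(w_\ell)$ while $\mathcal{P}(v,x)$ lies in the disjoint branch $B_v(w_j)$, we get the clean decomposition $\mathcal{P}(p,x)\cap\mathcal{P}(q,x)=\bigl(\mathcal{P}(p,v)\cap\mathcal{P}(q,v)\bigr)\cup\mathcal{P}(v,x)$. Hence the principal submatrix of $M_x(v)$ indexed by $B_v(w_\ell)$ equals $M_v(w_\ell)+J\otimes W$, where $W=\sum_{e\in\mathcal{P}(v,x)}W(e)^{-1}$ is positive definite. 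Exactly as in the proof of Lemma~\ref{lem:big-branch-sr}, the Inclusion Principle (Theorem~\ref{thm:inclu}) followed by the min--max theorem yields $\rho(M_x(v))\ge\rho(M_v(w_\ell)+J\otimes W)\ge\rho(M_v(w_\ell))=\rho(M_v)$.

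Chaining the two estimates gives $\rho(M_x(z))\le\rho(M_v)\le\rho(M_x(v))$ for every neighbour $z$ of $x$, which is precisely the assertion that $B_x(v)$ is a Perron branch at $x$.

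The main obstacle is the lower bound in the second step: $M_x(v)$ and $M_v(w_\ell)$ are bottleneck matrices based at \emph{different} vertices, so Lemma~\ref{lem:big-branch-sr} does not apply directly. The resolution is the explicit path-intersection identity above, which shows that restricting $M_x(v)$ to a far branch reproduces $M_v(w_\ell)$ shifted by the positive-definite perturbation $J\otimes W$; the Inclusion Principle plus positive semidefiniteness of $J\otimes W$ then do the rest. The remaining points are routine: the containment $B_x(z)\subsetneq B_v(w_j)$, the identity $B_v(z)=B_v(w_j)$, and the degenerate case $x\sim v$ (where $\mathcal{P}(v,x)$ is a single edge, so $W=W(e^{(j)})^{-1}$ is still positive definite) all follow from tracing paths in the tree.
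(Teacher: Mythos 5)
Your proof is correct and follows essentially the same route as the paper: pick a Perron branch $B_v(w_\ell)$ at $v$ on the far side of $x$, bound $\rho(M_x(z))\le\rho(M_v)$ via branch containment, and bound $\rho(M_x(v))\ge\rho(M_v(w_\ell))=\rho(M_v)$. The only difference is that you re-derive the path-intersection identity for the lower bound under the impression that Lemma~\ref{lem:big-branch-sr} needs a common base vertex, whereas that lemma is stated exactly for branches based at \emph{different} vertices containing a common vertex, so the paper simply observes $B_v(w_\ell)\subset B_x(w_\ell)=B_x(v)$ and cites it directly; your explicit computation is just an unfolding of that lemma's proof and changes nothing.
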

\begin{proof}
Let $x$  be a vertex in $T$ other than  $v$. Let $x\in B_v(u_1)$ and $u_1 \sim v$.  Since there are two or more  Perron branches at $v$ in $T$,  there exists a vertex $u_2$  other than $u_1$ such that $u_2 \sim v$ and  $B_v(u_2)$ is a Perron branch  at $v$ in $T$, {\it i.e.,} $\rho(M_v)=\rho(M_v(u_2))$. Thus $B_{x}(u_2)=B_x(v)$ and $B_v(u_2)\subset B_x(v)$.  Next, for any $z\sim x$ with $z\notin B_x(v)$, we have $B_x(z)\subset B_v(z)$. By Lemma~\ref{lem:big-branch-sr}, we get 
$$\rho(M_v)=\rho(M_v(u_2))\leq \rho(M_x(v)) \mbox{ and } \rho(M_x(z))\leq \rho(M_v(z))\leq \rho(M_v).$$ 
This implies that  $\rho(M_x(z))\leq \rho(M_x(v))$, thereby  completing the proof.
\end{proof}

\begin{theorem}\label{thm:ch-vertex}
Let $T$ be a tree with positive definite matrix weights on its edges. If there exists a vertex  $v$ in $T$ such that there are two or more  Perron branches at $v$, then $v$ is a unique vertex with such a property. Moreover, if  $x$ is a vertex other than $v$, then the unique Perron branch at $x$ in $T$ is the branch which contains the vertex $v$.
\end{theorem}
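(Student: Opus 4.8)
The plan is to prove the two assertions in the reverse of the order stated: first the ``moreover'' part, that for every $x\neq v$ the branch $B_x(v)$ is the \emph{unique} Perron branch at $x$, and then deduce the uniqueness of $v$ from it. Indeed, once the ``moreover'' part is available, if some $v'\neq v$ also had two or more Perron branches at $v'$, then applying the ``moreover'' part at $v$ with $x=v'$ would force $v'$ to possess a unique Perron branch (namely $B_{v'}(v)$), a contradiction; hence $v$ is unique. So the whole theorem reduces to showing that $B_x(v)$ is the unique Perron branch at each $x\neq v$. By Lemma~\ref{lem:ch-ver-lem-pd} we already know $B_x(v)$ is \emph{a} Perron branch at $x$, that is, $\rho(M_x(z))\le\rho(M_x(v))$ for every $z\sim x$; the entire difficulty is to promote this to a \emph{strict} inequality whenever $z$ is the neighbour of $x$ pointing away from $v$ (so that $B_x(z)\neq B_x(v)$).

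The key tool I would establish is a strict version of Lemma~\ref{lem:big-branch-sr}: for positive definite weights, $B_u(w)\subsetneq B_v(w)$ implies $\rho(M_u(w))<\rho(M_v(w))$. Writing $M_v(w)=\bigl[\begin{smallmatrix}M_u(w)+J\otimes W&C\\ C^{T}&E\end{smallmatrix}\bigr]$ as in that lemma, with $W=\sum_{e\in\mathcal{P}(u,v)}W(e)^{-1}$ positive definite, and taking a normalized Perron eigenvector $\mathbf{p}$ of $M_u(w)$ (an eigenvector for $\rho(M_u(w))$), the extended vector $(\mathbf{p},\mathbf{0})$ yields, through the Min-max Theorem, $\rho(M_v(w))\ge\rho(M_u(w))+\mathbf{q}^{T}W\mathbf{q}$, where $\mathbf{q}$ is the sum of the $s$-dimensional blocks of $\mathbf{p}$. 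Since $W$ is positive definite, strictness follows \emph{provided} $\mathbf{q}\neq\mathbf{0}$. This non-degeneracy of the block-sum is the main obstacle, and I would isolate it as a separate lemma: \emph{every Perron eigenvector of a bottleneck matrix has nonzero block-sum.}

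I expect to prove this non-degeneracy lemma by induction on the number $k$ of vertices in the branch, using the recursive decomposition $M_v(B)=J\otimes A_0+\widetilde{M}$, where $A_0=W(e_{vb})^{-1}$ with $b$ the neighbour of $v$ in $B$, and $\widetilde{M}=\operatorname{blkdiag}(\mathbf{0},M^{(1)},\dots,M^{(d)})$ collects the bottleneck matrices of the sub-branches hanging at $b$. If a Perron eigenvector $\mathbf{p}$ had zero block-sum, then $(J\otimes A_0)\mathbf{p}=\mathbf{0}$, forcing $\widetilde{M}\mathbf{p}=\rho\mathbf{p}$ with $\rho=\lambda_{\max}(M_v(B))$; since $J\otimes A_0$ is positive semidefinite this gives $\lambda_{\max}(\widetilde{M})=\rho$, $\mathbf{p}_b=\mathbf{0}$, and $\mathbf{p}$ restricts to a Perron eigenvector on some sub-branch $B^{(j_0)}$. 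By the inductive hypothesis that restriction has nonzero block-sum, so using it (extended by zero to all of $B$) as a test vector together with the positive definiteness of $A_0$ produces a Rayleigh quotient strictly above $\rho$, contradicting $\rho=\lambda_{\max}(M_v(B))$. The base case $k=1$ is immediate, the block-sum then being the nonzero eigenvector itself. The reason a naive induction fails, and the point the argument must finesse, is that several sub-branches at $b$ may be simultaneously Perron and their block-sums could a priori cancel in the full eigenvector; the single-sub-branch test vector circumvents this cancellation.

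With the strict lemma in hand, the assembly is routine. Fix $x\neq v$, let $u_1\sim x$ be the neighbour of $x$ on the path toward $v$ (so $B_x(v)=B_x(u_1)$), and let $z\sim x$ with $z\neq u_1$. Then $B_x(z)\subsetneq B_v(z)=B_v(u_1)$, giving $\rho(M_x(z))\le\rho(M_v(u_1))$. If $B_v(u_1)$ is not a Perron branch at $v$, then $\rho(M_v(u_1))<\rho(M_v)\le\rho(M_x(v))$ by Lemma~\ref{lem:ch-ver-lem-pd}, and we are done. If $B_v(u_1)$ is a Perron branch at $v$, choose a second Perron branch $B_v(u_2)$ with $u_2\neq u_1$, which exists since $v$ has at least two; then $B_v(u_2)\subsetneq B_x(v)$, and the strict lemma gives $\rho(M_x(v))>\rho(M_v(u_2))=\rho(M_v)=\rho(M_v(u_1))\ge\rho(M_x(z))$. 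In either case $\rho(M_x(z))<\rho(M_x(v))$ for every such $z$, so $B_x(v)$ is the unique Perron branch at $x$. The uniqueness of $v$ then follows immediately by the reduction described at the outset.
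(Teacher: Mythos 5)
Your proof is correct, but it is organized quite differently from the paper's. The paper splits into two cases according to whether some neighbour $u_1$ of $v$ also carries two or more Perron branches. In the benign case it argues much as you do at the end (via Lemma~\ref{lem:ch-ver-lem-pd} and the non-strict Lemma~\ref{lem:big-branch-sr}); in the problematic case it derives a contradiction by building a path $v=u_0\sim u_1\sim\cdots\sim u_r$ out to a pendant vertex, always following a sub-branch of maximal Perron value, and proving $\rho(M_{u_{i-1}}(u_i))>\rho(M_{u_i}(u_{i+1}))$ by a backward Rayleigh-quotient induction along that path; there, the non-vanishing of the block-sum of each successive Perron eigenvector is extracted step by step, by contradiction with the strict inequality just established at the previous step. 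You instead isolate that non-vanishing as a standalone lemma (every Perron eigenvector of a bottleneck matrix has nonzero block-sum), prove it by induction on the size of the branch using the decomposition $M_v(B)=J\otimes W(e_{vb})^{-1}+\widetilde M$ --- the same decomposition as the paper's Equation~\eqref{eqn:case2.3} --- and then use it to upgrade Lemma~\ref{lem:big-branch-sr} to a strict inequality under proper containment. Both arguments rest on the same analytic fact, but your packaging buys a genuinely reusable statement: your strict monotonicity lemma is the positive-definite analogue of Proposition~\ref{prop:big-branch-scalar}, which the paper proves for lower-triangular weights (Lemma~\ref{lem:big-branch-sr-lt}) but never states for positive definite weights; with it the paper's case analysis collapses and the uniqueness of $v$ becomes a one-line corollary of the ``moreover'' part. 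Your handling of the possible cancellation among several simultaneously-Perron sub-branches (testing with a single restricted eigenvector rather than with the full one) is exactly the right finesse, and it mirrors what the paper does implicitly. Two cosmetic points: when $x\sim v$ your $u_1$ equals $v$ and $B_v(u_1)$ should read $B_v(x)$; and the inequality $\rho(M_v)\le\rho(M_x(v))$ appears inside the proof of Lemma~\ref{lem:ch-ver-lem-pd} rather than in its statement, so you should either re-derive it (choose a Perron branch $B_v(u_2)$ at $v$ with $u_2\notin\mathcal{P}(v,x)$ and apply monotonicity to $B_v(u_2)\subset B_x(v)$) or point to that proof explicitly.
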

\begin{proof}
Let $v$ be a vertex in $T$  such that there are two or more  Perron branches at $v$. We consider the following two cases to complete the proof.\\

\noindent$\underline{\textbf{Case 1:}}$ There is  a unique Perron branch  at $u$ in $T$, whenever $u$ adjacent to $v$.\\

Let $x$  be a vertex in $T$ other than  $v$ and  $x\in B_v(u_1)$ for  $u_1 \sim v$. Then $B_{u_1}(v)\subseteq B_x(v)$. For  any $y\sim x $ with $ y \notin B_x(v)$, we have $B_x(y) \subseteq B_{u_1} (y)$. By Lemma~\ref{lem:big-branch-sr}, we get
\begin{equation}\label{eqn:case1}
\rho(M_{u_1}(v))\leq \rho(M_{x}(v)) \mbox{ and } \rho(M_{x}(y))\leq \rho(M_{u_1}(y)). 
\end{equation}

Using Lemma~\ref{lem:ch-ver-lem-pd}, we see that  $B_{u_1}(v)$  is a Perron branch  at $u_1$ in $T$. By our assumption for this case $B_{u_1}(v)$  is the  unique Perron branch  at $u_1$ in $T$. Thus $\rho(M_{u_1}(y)) < \rho(M_{u_1}(v))$. Hence, by Equation~\eqref{eqn:case1}, we have $\rho(M_{x}(y))< \rho(M_{x}(v))$.  Therefore, $B_x(v)$  is the unique Perron branch  at $x$ in $T$ and the result follows.\\

\noindent$\underline{\textbf{Case 2:}}$ There exists a  vertex $u_1$ adjacent to $v$ such that there are two or more Perron branches  at $u_1$ in $T$.\\

By the  hypothesis and our assumption for this case, there are two or more Perron branches of $T$ at both the vertices $v$ and $u_1$. Therefore,   Lemma~\ref{lem:ch-ver-lem-pd} shows that  $B_v(u_1)$ and $B_{u_1}(v)$ are Perron branches of $T$ at $v$ and $u_1$, respectively.   Thus
\begin{equation}\label{eqn:case2.1}
\rho(M_v)=\rho(M_v(u_1)) \mbox{ and } \rho(M_{u_1})=\rho(M_{u_1}(v)).
\end{equation}
Since there are two or more Perron branches  at $v$ in $T$,  there exists $w \sim v \; (w\neq u_1)$ such that $B_v(w)$ is a Perron branch  at $v$ in $T$. Similarly, there exists  $u_2 \sim u_1 \; (u_2 \neq v)$ such that  $B_{u_1}(u_2)$ is a Perron branch at  $u_1$ in $T$. Thus  $$B_v(w) \subset B_{u_1}(w)=  B_{u_1}(v) \mbox{ and } B_{u_1}(u_2) \subset B_v(u_2)=  B_v(u_1). $$
 Hence, by  Lemma~\ref{lem:big-branch-sr} and   Equation~\eqref{eqn:case2.1}, we get 
 $$\rho(M_v)=\rho(M_v(w))\leq \rho(M_{u_1}(v))\leq \rho(M_{u_1})=\rho(M_{u_1}(u_2)) \leq \rho(M_v(u_1))=\rho(M_v).$$
Therefore, 
\begin{equation}\label{eqn:case2.2}
\rho(M_v)=\rho(M_v(u_1))=\rho(M_{u_1}(u_2))=\rho(M_{u_1}).
\end{equation}

Next, we consider all the branches of $T$ at $u_2$ except for $B_{u_2}(u_1)$ and choose a branch  such that the bottleneck matrix is with maximum spectral radius   in the following way. Let $u_3 \sim u_2$ and  $B_{u_2}(u_3)$ be a branch  such that $M_{u_2}(u_3)$ is with maximum spectral radius amongst all the branches of $T$ at $u_2$ except for $B_{u_2}(u_1)$ and  repeat the process until we  reach a pendant vertex. Thus, there exists  a path $v=u_0\sim u_1 \sim \cdots \sim u_r$ such that $u_r$ is a pendant vertex, and   $B_{u_i}(u_{i+1})$ is a branch  such that $M_{u_i}(u_{i+1})$ is with maximum spectral radius amongst all the branches of $T$ at $u_i$ except for the branch $B_{u_i} (u_{i-1})$ for $1\leq i\leq r-1$.

For $1\leq i\leq r-1$, let $\widehat{M}_{u_i}$  denote the principal submatrix of  $M_{u_i}$ obtained by deleting the block $M_{u_i}(u_{i-1})$ from $M_{u_i}$, {\it i.e.,}
$$M_{u_i}=\left[
\begin{array}{c|c}
\widehat{M}_{u_i} &  \mathbf{0} \\
\midrule
\mathbf{0}  &  M_{u_i}(u_{i-1})
\end{array}
\right],$$
 and let $e_i$ denote the edge between the vertices $u_{i-1}$ and $u_{i}$.  Then 
\begin{equation}\label{eqn:case2.3}
\begin{cases}
M_{u_{i-1}}(u_i)= \widehat{\widehat{M\, }}_{u_i}+ J\otimes [W(e_i)^{-1}] & \mbox{ for }1\leq i\leq r-1, \\
M_{u_{r-1}}(u_r)= W(e_r)^{-1},
\end{cases}
\end{equation}
where  $\widehat{\widehat{M\, }}_{u_i}=\left[
\begin{array}{c|c}
\widehat{M}_{u_i} &  \mathbf{0} \\
\midrule
\mathbf{0}  &  \mathbf{0}_{s\times s}
\end{array}
\right]$  if the matrix weights on the edges are of order $s\times s$.

 By  our construction $\widehat{\widehat{M\, }}_{u_i}$ is a block diagonal matrix and $M_{u_i}(u_{i+1})$ is of maximum spectral radius amongst all the blocks of  $\widehat{\widehat{M\, }}_{u_i}$. Hence
\begin{equation}\label{eqn:bl-sp}
\rho(\widehat{\widehat{M\, }}_{u_i})=\rho(\widehat{M}_{u_i})= \rho(M_{u_i}(u_{i+1}))  \mbox{ for } 1\leq i\leq r-1.
\end{equation}
Thus, if $\mathbf{x}_{i+1}$ is an eigenvector of $M_{u_i}(u_{i+1})$ corresponding to 
$\rho(M_{u_i}(u_{i+1}))$, then  the vector $\widehat{\mathbf{x}}_{i+1}=(\mathbf{x}_{i+1}, \mathbf{0}, \ldots, \mathbf{0})$ of conformal order, is an eigenvector of $\widehat{\widehat{M\, }}_{u_i}$ corresponding to $\rho(\widehat{\widehat{M\, }}_{u_i})$.

For $i=r-1$, $\rho(\widehat{\widehat{M\, }}_{u_{r-1}})= \rho(M_{u_{r-1}}(u_r))= \rho(W(e_r)^{-1})$. Let $\mathbf{x}_r$ be an eigenvector of $W(e_r)^{-1}$  corresponding to $\rho(W(e_r)^{-1})$. Using $ \mathbf{x}_{r}^T [W(e_{r-1})^{-1}]\mathbf{x}_{r} >0 $,  Equations~\eqref{eqn:case2.3} and \eqref{eqn:bl-sp},  we have 
\begin{align*}
\widehat{\mathbf{x}}_{r}^T M_{u_{r-2}}(u_{r-1}) \widehat{\mathbf{x}}_{r}&= \widehat{\mathbf{x}}_{r}^T \widehat{\widehat{M\, }}_{u_{r-1}}\widehat{\mathbf{x}}_{r} + \widehat{\mathbf{x}}_{r}^T( J\otimes [W(e_{r-1})^{-1}]) \widehat{\mathbf{x}}_{r} \\
&= \mathbf{x}_{r}^T [W(e_r)^{-1}]\mathbf{x}_{r} 
 + \mathbf{x}_{r}^T [W(e_{r-1})^{-1}]\mathbf{x}_{r}\\
&> \rho(W(e_r)^{-1}) \\ 
&= \rho(M_{u_{r-1}}(u_r)),
\end{align*}
which implies that $ \rho(M_{u_{r-2}}(u_{r-1})) > \rho(M_{u_{r-1}}(u_r)).$ 

 Further, suppose $\mathbf{x}_{r-1} \in$ Null($J\otimes [W(e_{r-1})^{-1}]$). By Equation~\eqref{eqn:case2.3}, we have
 \begin{align*}
 \mathbf{x}_{r-1}^T M_{u_{r-2}}(u_{r-1}) \mathbf{x}_{r-1} &= \mathbf{x}_{r-1}^T \widehat{\widehat{M\, }}_{u_{r-1}}\mathbf{x}_{r-1} + \mathbf{x}_{r-1}^T( J\otimes [W(e_{r-1})^{-1}]) \mathbf{x}_{r-1}\\
 &= \mathbf{x}_{r-1}^T \widehat{\widehat{M\, }}_{u_{r-1}}\mathbf{x}_{r-1}.
 \end{align*}
The min-max theorem yields that $\rho(M_{u_{r-2}}(u_{r-1}))\leq \rho(M_{u_{r-1}}(u_r))$, which is a contradiction.  Thus, $\mathbf{x}_{r-1} \notin$ Null($J\otimes [W(e_{r-1})^{-1}]$)  and  hence     Remark~\ref{rem:rem1} now implies  $\widehat{\mathbf{x}}_{r-1} \notin$ Null( $J\otimes [W(e_{r-2})^{-1}]$), where $\widehat{\mathbf{x}}_{r-1}=(\mathbf{x}_{r-1}, \mathbf{0}, \ldots, \mathbf{0})$  is an eigenvector of $\widehat{\widehat{M\, }}_{u_{r-2}}$ corresponding to  
$\rho(\widehat{\widehat{M\, }}_{u_{r-2}})= \rho(M_{u_{r-2}}(u_{r-1}))$. Using $\widehat{\mathbf{x}}_{r-1}^T( J\otimes [W(e_{r-2})^{-1}]) \widehat{\mathbf{x}}_{r-1} >0$,  Equations~\eqref{eqn:case2.3} and \eqref{eqn:bl-sp}, we have
\begin{align*}
\widehat{\mathbf{x}}_{r-1}^T M_{u_{r-3}}(u_{r-2}) \widehat{\mathbf{x}}_{r-1}&= \widehat{\mathbf{x}}_{r-1}^T \widehat{\widehat{M\, }}_{u_{r-2}} \widehat{\mathbf{x}}_{r-1} + \widehat{\mathbf{x}}_{r-1}^T( J\otimes [W(e_{r-2})^{-1}]) \widehat{\mathbf{x}}_{r-1} \\
&> \rho(M_{u_{r-2}}(u_{r-1})),
\end{align*} 
which implies that $ \rho(M_{u_{r-3}}(u_{r-2})) > \rho(M_{u_{r-2}}(u_{r-1})).$ Proceeding inductively  we have
$$  \rho(M_{u_{i-1}}(u_{i})) >  \rho(M_{u_{i}}(u_{i+1}))  \mbox{ for } 1\leq i\leq r-1,$$
which is a contradiction to  Equation~\eqref{eqn:case2.2} as $v=u_0$.  

Therefore, the assumption for  Case $2$ is not valid,  which implies that for any adjacent vertex $u$ of $v$, there is a unique Perron branch at $u$ in $T$. Hence, combining the conclusions of  Case $1$ and Lemma~\ref{lem:ch-ver-lem-pd}, the desired result follows. 
\end{proof}

In view of Lemma~\ref{lem:big-branch-sr} and the results in  Theorems~\ref{thm:ch-edge1} and~\ref{thm:ch-vertex}, it is easy to see that Results~\ref{result:1} and~\ref{result:2} hold if the edge weights of the tree $T$ are positive definite matrices. 

%%are associated with positive definite matrix weights.
%%
%% We first consider the case where the edge weights are positive definite matrices.

\subsection{Results for Lower (or Upper) Triangular Matrix Weights}\label{sec:ltw}
In this section, we consider trees where the weights on the edges are lower (or upper) triangular matrices with positive diagonal entries.  Since the arguments in the proofs for lower triangular matrix weights and upper triangular matrix weights are similar,  we only provide results for lower triangular matrix weights with positive diagonal entries. We begin with a few preliminary results.

Let $\mathcal{M}_n(\mathbb{R})$ be the class of real matrices of order $n \times n$ and  $A,B \in \mathcal{M}_n(\mathbb{R})$.   Matrices $A$ and $B$ are said to be permutation equivalent, denoted by $A \simeq B$, if there exists a permutation matrix $\mathbf{P}$ such that $A =\mathbf{P} B \mathbf{P}^T.$ We now state a result related to  permutation equivalence and the Kronecker product.
\begin{prop}\label{prop:kroneck-similar}\cite{Henderson}
Let $A\in \mathcal{M}_m(\mathbb{R})$ and $B \in \mathcal{M}_n(\mathbb{R})$. Let $\mathbf{P}$ be a permutation matrix of order $mn \times mn$ such that
$$P=\sum_{i=1}^m (\mathbf{e}_i \otimes I_n \otimes \mathbf{e}_i^T),$$
where $\{\mathbf{e_i} : 1\leq i \leq m\}$ is the standard basis of $\mathbb{R}^m$. Then  $A\otimes B=\mathbf{P} (B \otimes A) \mathbf{P}^T,$ {\it i.e.,} $A\otimes B$ and $B \otimes A$ are permutation equivalent.
\end{prop}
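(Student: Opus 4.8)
The plan is to first show that $\mathbf{P}$ is precisely the ``perfect shuffle'' permutation matrix interchanging the two tensor factors, and then to deduce the conjugation identity $A\otimes B=\mathbf{P}(B\otimes A)\mathbf{P}^T$ by checking its action on a basis. Throughout I would use only the mixed-product rule $(A_1\otimes A_2)(C_1\otimes C_2)=(A_1C_1)\otimes(A_2C_2)$, valid whenever the indicated products exist.

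Let $\{\mathbf{e}_i:1\le i\le m\}$ and $\{\mathbf{f}_j:1\le j\le n\}$ be the standard bases of $\mathbb{R}^m$ and $\mathbb{R}^n$, so that $\{\mathbf{f}_j\otimes\mathbf{e}_k\}$ is a basis of $\mathbb{R}^{nm}$ and $\{\mathbf{e}_k\otimes\mathbf{f}_j\}$ a basis of $\mathbb{R}^{mn}$. The first step is to compute the action of $\mathbf{P}=\sum_{i=1}^m(\mathbf{e}_i\otimes I_n\otimes \mathbf{e}_i^T)$ on $\mathbf{f}_j\otimes\mathbf{e}_k$. Viewing each summand as a triple Kronecker product (with $\mathbf{e}_i$ acting out of $\mathbb{R}^1$) and applying the mixed-product rule factorwise gives
\[
(\mathbf{e}_i\otimes I_n\otimes \mathbf{e}_i^T)(\mathbf{f}_j\otimes\mathbf{e}_k)=\mathbf{e}_i\otimes(I_n\mathbf{f}_j)\otimes(\mathbf{e}_i^T\mathbf{e}_k)=(\mathbf{e}_i^T\mathbf{e}_k)\,\mathbf{e}_i\otimes\mathbf{f}_j,
\]
since $\mathbf{e}_i^T\mathbf{e}_k=\delta_{ik}$ is a scalar. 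Summing over $i$ collapses the sum to the single term $i=k$, so $\mathbf{P}(\mathbf{f}_j\otimes\mathbf{e}_k)=\mathbf{e}_k\otimes\mathbf{f}_j$. Thus $\mathbf{P}$ sends the basis $\{\mathbf{f}_j\otimes\mathbf{e}_k\}$ bijectively onto the basis $\{\mathbf{e}_k\otimes\mathbf{f}_j\}$, whence $\mathbf{P}$ is a permutation matrix and, by linearity, the shuffle identity $\mathbf{P}(\mathbf{y}\otimes\mathbf{x})=\mathbf{x}\otimes\mathbf{y}$ holds for all $\mathbf{x}\in\mathbb{R}^m$, $\mathbf{y}\in\mathbb{R}^n$. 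In particular $\mathbf{P}^T=\mathbf{P}^{-1}$ performs the reverse swap, $\mathbf{P}^T(\mathbf{e}_k\otimes\mathbf{f}_j)=\mathbf{f}_j\otimes\mathbf{e}_k$.

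With the shuffle property in hand, I would verify the identity on the basis $\{\mathbf{e}_k\otimes\mathbf{f}_j\}$ of $\mathbb{R}^{mn}$. On the one hand $(A\otimes B)(\mathbf{e}_k\otimes\mathbf{f}_j)=(A\mathbf{e}_k)\otimes(B\mathbf{f}_j)$. On the other hand, applying in turn $\mathbf{P}^T$, then $B\otimes A$, then $\mathbf{P}$,
\[
\mathbf{P}(B\otimes A)\mathbf{P}^T(\mathbf{e}_k\otimes\mathbf{f}_j)=\mathbf{P}(B\otimes A)(\mathbf{f}_j\otimes\mathbf{e}_k)=\mathbf{P}\big((B\mathbf{f}_j)\otimes(A\mathbf{e}_k)\big)=(A\mathbf{e}_k)\otimes(B\mathbf{f}_j),
\]
where the last equality is the shuffle identity applied to $B\mathbf{f}_j\in\mathbb{R}^n$ and $A\mathbf{e}_k\in\mathbb{R}^m$. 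Since the two sides agree on every basis vector, $A\otimes B=\mathbf{P}(B\otimes A)\mathbf{P}^T$, which is the asserted permutation equivalence.

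I expect the only delicate point to be the bookkeeping of tensor-factor ordering and dimensions — in particular, keeping straight that $\mathbf{P}$ maps $\mathbb{R}^{nm}$ onto $\mathbb{R}^{mn}$ by interchanging the roles of the $\mathbb{R}^m$ and $\mathbb{R}^n$ factors — rather than any substantive difficulty. Once the shuffle identity $\mathbf{P}(\mathbf{y}\otimes\mathbf{x})=\mathbf{x}\otimes\mathbf{y}$ is established, the conjugation identity follows immediately from the mixed-product rule.
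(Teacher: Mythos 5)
Your proof is correct: the computation $(\mathbf{e}_i\otimes I_n\otimes\mathbf{e}_i^T)(\mathbf{f}_j\otimes\mathbf{e}_k)=\delta_{ik}\,\mathbf{e}_i\otimes\mathbf{f}_j$ does establish the shuffle identity $\mathbf{P}(\mathbf{y}\otimes\mathbf{x})=\mathbf{x}\otimes\mathbf{y}$, and the conjugation identity then follows correctly from the mixed-product rule on basis vectors. The paper gives no proof of this proposition --- it is quoted from the reference \cite{Henderson} on the vec-permutation matrix --- so there is nothing to compare against; your argument is the standard one for this commutation-matrix identity and is complete.
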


The  permutation matrix $P=\sum_{i=1}^m (\mathbf{e}_i \otimes I_n \otimes \mathbf{e}_i^T)$ is also called the vec-permutation matrix and is denoted by $I_{m,n}$ (for details, see~\cite{Henderson}).  We now prove a lemma that plays an important role in proving Results~\ref{result:1} and~\ref{result:2}. To prove this result, we use the fact that the vec-permutation matrix $P=I_{m,n}$ does not depend on the entries of the matrices $A$ and $B$,  but depends only on the order of these matrices.

\begin{lem}\label{lem:perm-equv}
 For $1\leq i,j\leq m$, let $X_{ij}$ be  matrices of order $s\times s$. For $1\leq l,k\leq s$, let $\widetilde{X}_{lk}$ be  matrices of order $m\times m$ such that $(\widetilde{X}_{lk})_{ij}= (X_{ij})_{lk}$, {\it i.e., } the $(i,j)^{th}$ entry of $\widetilde{X}_{lk}$ is   the $(l,k)^{th}$ entry of  $X_{ij}$. If $X=[X_{ij}]$ and $\widetilde{X}=[\widetilde{X}_{lk}]$ are the block matrices of order $ms \times ms$, then $X$ and $\widetilde{X}$ are permutation equivalent.
\end{lem}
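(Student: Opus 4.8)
The plan is to exhibit an explicit permutation matrix that conjugates $X$ into $\widetilde{X}$, and the natural candidate is the vec-permutation matrix $I_{m,s}$ from Proposition~\ref{prop:kroneck-similar}. The key observation is that both $X$ and $\widetilde{X}$ are assembled from the \emph{same} scalar data, namely the numbers $(X_{ij})_{lk}$ for $1\leq i,j\leq m$ and $1\leq l,k\leq s$; they differ only in how these $m^2 s^2$ scalars are grouped into blocks. In $X$ the outer index pair is $(i,j)$ (ranging over $m$ values) and the inner pair is $(l,k)$ (ranging over $s$ values), whereas in $\widetilde{X}$ the roles are swapped: the outer pair is $(l,k)$ and the inner pair is $(i,j)$. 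This is precisely the kind of ``index-swap'' that the vec-permutation matrix is designed to implement.

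First I would set up coordinates carefully. A row (respectively column) of $X$ is indexed by a pair $(i,l)$ where $i$ selects the block row and $l$ the position within that block, with the entry of $X$ at position $((i,l),(j,k))$ equal to $(X_{ij})_{lk}$. Correspondingly, a row of $\widetilde{X}$ is indexed by a pair $(l,i)$, and its entry at position $((l,i),(k,j))$ equals $(\widetilde{X}_{lk})_{ij} = (X_{ij})_{lk}$, which is the \emph{same} scalar. So $X$ and $\widetilde{X}$ have identical entries once we identify the index $(i,l)$ used by $X$ with the index $(l,i)$ used by $\widetilde{X}$. Let $P = I_{m,s} = \sum_{i=1}^m(\mathbf{e}_i\otimes I_s\otimes \mathbf{e}_i^T)$ be the vec-permutation matrix, which I would verify sends the standard basis vector indexed by $(l,i)$ (in the $\widetilde{X}$ ordering) to the one indexed by $(i,l)$ (in the $X$ ordering). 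Then I would compute the $((l,i),(k,j))$ entry of $P^T X P$ and check it equals the $((i,l),(j,k))$ entry of $X$, i.e.\ $(X_{ij})_{lk}$, which by definition is $(\widetilde{X}_{lk})_{ij}$. This gives $\widetilde{X} = P^T X P = \mathbf{P} X \mathbf{P}^T$ with $\mathbf{P} = P^T$, establishing $X \simeq \widetilde{X}$.

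The cleanest way to organize the verification is to avoid index-chasing altogether by reducing to Proposition~\ref{prop:kroneck-similar} on rank-one building blocks. I would write $X = \sum_{i,j} (\mathbf{e}_i\mathbf{e}_j^T)\otimes X_{ij}$ and $\widetilde{X} = \sum_{l,k}(\mathbf{f}_l\mathbf{f}_k^T)\otimes \widetilde{X}_{lk}$, where $\{\mathbf{e}_i\}$ and $\{\mathbf{f}_l\}$ are the standard bases of $\mathbb{R}^m$ and $\mathbb{R}^s$. Using $(X_{ij})_{lk} = \mathbf{f}_l^T X_{ij}\mathbf{f}_k$ and $(\widetilde{X}_{lk})_{ij} = \mathbf{e}_i^T \widetilde{X}_{lk}\mathbf{e}_j$, both double sums can be re-expressed in terms of the single family of rank-one-by-rank-one terms $(\mathbf{e}_i\mathbf{e}_j^T)\otimes(\mathbf{f}_l\mathbf{f}_k^T)$ weighted by the common scalar $(X_{ij})_{lk}$. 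Applying Proposition~\ref{prop:kroneck-similar} (which guarantees $A\otimes B = P(B\otimes A)P^T$ with $P$ depending only on the orders $m,s$ and not on $A,B$) to each term simultaneously, and crucially using that the same $P = I_{m,s}$ works for every term, yields $X = P\,\widetilde{X}\,P^T$ in one stroke.

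The main obstacle I anticipate is purely bookkeeping: keeping the two index orderings $(i,l)$ versus $(l,i)$ straight and confirming that the \emph{single} vec-permutation matrix $P$ simultaneously realizes the block transpose for all rank-one terms. The substantive point — and the reason Proposition~\ref{prop:kroneck-similar} is invoked rather than constructing $P$ by hand — is precisely that $I_{m,s}$ is independent of the matrix entries, so one permutation handles the whole sum at once rather than requiring a different conjugation for each block. Once that independence is used, no genuine computation remains; the result is a direct consequence of the structure of the Kronecker product under the vec-permutation.
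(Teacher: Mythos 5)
Your proposal is correct and follows essentially the same route as the paper: expand $X$ and $\widetilde{X}$ over the rank-one building blocks $E_{ij}\otimes F_{lk}$ and $F_{lk}\otimes E_{ij}$ with the common coefficients $(X_{ij})_{lk}=(\widetilde{X}_{lk})_{ij}$, and apply Proposition~\ref{prop:kroneck-similar} termwise, using that the single vec-permutation matrix works for every term. The entrywise index-chasing in your second paragraph is redundant once the Kronecker decomposition argument is in place, but it is not wrong.
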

\begin{proof}
Let $\{\mathbf{e}_i : 1\leq i \leq m\}$ be the standard basis of $\mathbb{R}^m$ and $\{\mathbf{f}_l : 1\leq l \leq s\}$ be the standard basis of $\mathbb{R}^s$. For $1\leq i,j\leq m$ and $1\leq l,k\leq s$, let 
$$E_{ij}= \mathbf{e}_i {\mathbf{e}_j}^T  \mbox{ and } F_{lk}=\mathbf{f}_l{\mathbf{f}_k}^T. $$
Then, $\{E_{ij} : 1\leq i,j\leq m\}$ is the standard basis of $\mathcal{M}_{m}(\mathbb{R})$ and $\{F_{lk} : 1\leq l,k\leq s\}$ is the standard basis of $\mathcal{M}_{s}(\mathbb{R})$. Moreover, $$\{E_{ij} \otimes  F_{lk} : 1\leq i,j\leq m, 1\leq l,k\leq s \} \mbox{ and } \{F_{lk} \otimes E_{ij} : 1\leq i,j\leq m, 1\leq l,k\leq s \}$$  are both bases of
 $\mathcal{M}_{ms}(\mathbb{R})$. Thus,
\begin{equation}\label{eqn:X}
X = \sum_{i,j} X_{ij} \otimes E_{ij}\\
    = \sum_{ i,j } \left(  \sum_{ l,k } (X_{ij})_{lk}\  F_{lk} \right) \otimes E_{ij} 
   = \sum_{i,j }  \sum_{ l,k }   (X_{ij})_{lk}\ (F_{lk} \otimes E_{ij}),
\end{equation}
and 
\begin{equation}\label{eqn:X-tilde}
\widetilde{X} = \sum_{l,k} \widetilde{X}_{lk} \otimes F_{lk}\\
    = \sum_{l,k} \left( \sum_{ i,j } (\widetilde{X}_{lk})_{ij}\ E_{ij}  \right) \otimes F_{lk} 
  = \sum_{l,k} \sum_{ i,j } (\widetilde{X}_{lk})_{ij} \ (E_{ij}  \ \otimes F_{lk} ).
\end{equation}
 By Proposition~\ref{prop:kroneck-similar}, there exists a permutation matrix $\mathbf{P}$ such that 
 \begin{equation}\label{eqn: X-eqv-X-tilde}
 F_{lk} \otimes E_{ij}= \mathbf{P}(E_{ij} \otimes  F_{lk})\mathbf{P}^T \mbox{ for all } 1\leq i,j\leq m \mbox{ and } 1\leq l,k\leq s.
\end{equation}
Using  $(\widetilde{X}_{lk})_{ij}= (X_{ij})_{lk}$ and  Equations~\eqref{eqn:X} -~\eqref{eqn: X-eqv-X-tilde}, we have 
$$X=\sum_{i,j}  \sum_{l,k }   (\widetilde{X}_{lk})_{ij}\  \mathbf{P}(E_{ij} \otimes  F_{lk})\mathbf{P}^T = \mathbf{P} \left(\sum_{ i,j }  \sum_{l,k }   (\widetilde{X}_{lk})_{ij}\  (E_{ij} \otimes  F_{lk})\right)\mathbf{P}^T =\mathbf{P} \widetilde{X} \mathbf{P}^T.
 $$
 This completes the proof.
\end{proof}

\begin{rem}\label{obs:similar}
\begin{enumerate}
\item[$1.$]  Under the hypothesis of Lemma~\ref{lem:perm-equv},  if $X_{ij}$'s are lower triangular  matrices, then $\widetilde{X}$ is a lower triangular block matrix, {\it i.e.}, 
 \begin{equation*}
{\small
\widetilde{X}=\left[
\begin{array}{c |c| c| c}
   \widetilde{X}_{11} & \mathbf{0}& \dots & \mathbf{0} \\
    \midrule
    \widetilde{X}_{21} & \widetilde{X}_{22} & \dots & \mathbf{0} \\
    \midrule    
    \vdots & \vdots & \ddots & \vdots \\
    \midrule
    \widetilde{X}_{s1}  & \widetilde{X}_{s2}& \dots & \widetilde{X}_{ss}
\end{array}
\right].}
\end{equation*}
Since $X$ and $\widetilde{X}$ are permutation equivalent, 
 $\sigma(X)=  \sigma(\widetilde{X})=  \bigcup_{i=1}^s \sigma(\widetilde{X}_{ii}).$ It is easy to see that a similar assertion can be made  whenever $X_{ij}$'s are upper triangular  matrices.

\item[$2.$] If $W=[W_{ij}]$ is an invertible lower (or upper) triangular matrix, then $W^{-1}$ is a lower (or upper) triangular matrix and the diagonal entries of  $W^{-1}$ are given by $(W^{-1})_{ii}= \dfrac{1}{W_{ii}}$.  
\end{enumerate}  
\end{rem}

A graph $G=(V, E)$ with weights assigned to its edges can also be presented as an ordered pair $(G, \{W(e)\}_{e\in E})$, where $G$ is the underlying graph and $\{W(e)\}_{e\in E}$ is the set of weights assigned to the edges in $E$. Using this representation, we define trees with positive edge weights obtained from a tree whenever  the weights assigned to its edges are lower triangular matrices with positive diagonal entries.

\begin{defn}
Let $T=(V,E)$ be  a tree such that the  weights on the edges of $T$ are $s\times s$ lower  triangular matrices with positive diagonal entries.  Let $W(e)= [w_{ij}(e)]$ denote the ($s\times s$ lower triangular  matrix) weight on the edge $e\in E$ such that $w_{jj}(e) > 0$ for all $1\leq j\leq s$.  For $1\leq j\leq s$,  let $$T^{(j)}=(T, \{w_{jj}(e)\}_{e\in E})$$ be  the tree $T=(V,E)$ with positive edge weights $\{w_{jj}(e)\}_{e\in E}$. We say that $ T^{(j)}$ is a tree with positive edge weights induced by $T=(V,E)$ with  $s\times s$ lower triangular matrix edge weights.
\end{defn}  

\begin{ex}
\begin{figure}[ht]
\centering
\begin{tikzpicture} [scale=1]
\Vertex[x=-4,y=1,label=$v_1$]{A}
\Vertex[x=-4,y=-1,label=$v_2$]{B}
\Vertex[x=-2,y=0,label=$v_3$]{C}
\Vertex[x=0,y=0,label=$v_4$]{D}
\Vertex[x=2,y=0,label=$v_5$]{E}
\Vertex[x=4,y=1,label=$v_6$]{F}
\Vertex[x=4,y=-1,label=$v_7$]{G}

\Edge[label=$e_1$,position=above](A)(C)
\Edge[label=$e_2$,position=above](B)(C)
\Edge[label=$e_3$,position=above](C)(D)
\Edge[label=$e_4$,position=above](D)(E)
\Edge[label=$e_5$,position=above](E)(F)
\Edge[label=$e_6$,position=above](E)(G)
\end{tikzpicture} 
\caption{}
    \label{fig:A1}
\end{figure}

Let $V=\{v_1,v_2,v_3,v_4,v_5, v_6,v_7\}$ and  $E=\{e_1,e_2,e_3,e_4,e_5, e_6\}$. Consider the tree $T=(V,E)$, as shown in  Figure~\eqref{fig:A1} with the lower triangular matrix weights 
\begin{align*}
&\mathcal{W}=\left\{\  W(e_1)=\left[
\begin{array}{r r r}
3  &  0  &  0 \\
0  &  2  &  0 \\
10 &  17 &  5
\end{array}
\right],
W(e_2)=\left[
\begin{array}{r r r}
9   &  0  &  0 \\
-3  &  4  &  0 \\
2   &  5  &  17
\end{array}
\right],
W(e_3)=\left[
\begin{array}{r r r}
1    &  0   &  0 \\
-11  &  12  &  0 \\
0    &  -4  &  4
\end{array}
\right],\right.\\
&\qquad \quad\  \left.
W(e_4)=\left[
\begin{array}{r r r}
11    &  0   &  0 \\
2  &  1  &  0 \\
-16   &  0  &  3
\end{array}
\right],
W(e_5)=\left[
\begin{array}{r r r}
15    &  0   &  0 \\
3     &  6  &  0 \\
2     &  1  &  9
\end{array}
\right],
W(e_6)=\left[
\begin{array}{r r r}
7   &  0   &  0 \\
-10    & 8  &  0 \\
-9    &  -1  &  6
\end{array}
\right]\ \ \right\}.
\end{align*}
Let 
\begin{align*}
&\mathcal{W}^{(1)}=\left\{ W(e_1)=3,  W(e_2)=9,  W(e_3)=1, W(e_4)=11,  W(e_5)=15,  W(e_6)=7\right\},\\
\\
&\mathcal{W}^{(2)}=\left\{ W(e_1)=2,  W(e_2)=4,  W(e_3)=12, W(e_4)=1,  W(e_5)=6,  W(e_6)=8\right\},\\
\\
&\mathcal{W}^{(3)}=\left\{ W(e_1)=5,  W(e_2)=17,  W(e_3)=4, W(e_4)=3,  W(e_5)=9,  W(e_6)=6\right\}.\\
\end{align*}
Let    $T^{(1)}=(T, \mathcal{W}^{(1)} )$, $T^{(2)}=(T, \mathcal{W}^{(2)})$  and $T^{(3)}=(T, \mathcal{W}^{(3)} )$. Thus, $T^{(1)}, T^{(2)}$  and $T^{(3)}$   are the trees with positive edge weights induced by $T=(V,E)$ with $3\times 3$ lower triangular matrix edge weights $\mathcal{W}$.
\end{ex}

Let $T=(V,E)$ be  a tree on $n$ vertices such that the  weights on the edges of $T$ are $s\times s$ lower  triangular matrices with positive diagonal entries.  Let $W(e)= [w_{ij}(e)]$ denote the ($s\times s$ lower triangular  matrix) weight on the edge $e\in E$ such that $w_{jj}(e) > 0$ for all $1\leq j\leq s$. For $1\leq j\leq s$, let $ T^{(j)}$   be the trees with positive edge weights induced by $T=(V,E)$ with $s\times s$ lower triangular matrix edge weights. However, $(T,\{W(e)\}_{e\in E})$  is the tree $T=(V,E)$ with the matrix weights $\{W(e)\}_{e\in E}$ on its edges, simply written as $T$.  For $1\leq j\leq s$,  let $L(T)$ and $L(T^{(j)})$ be the Laplacian matrices of $T$ and $T^{(j)}$, respectively. Then, $L(T)$ is a matrix of order $ns\times ns$ and $L(T^{(j)})$ is a matrix  of order $n \times n$ for $1\leq j\leq s$.  Thus,   using  Remark~\ref{obs:similar} $(1)$ for the Laplacian matrix $L(T)$ of $T$, we have
 \begin{equation}\label{eqn:lap-ltw}
{\small
L(T)\simeq \widetilde{L(T)}=\left[
\begin{array}{c |c| c| c}
    L(T^{(1)}) & \mathbf{0}& \dots & \mathbf{0} \\
    \midrule
    \ast & L(T^{(2)}) & \dots & \mathbf{0} \\
    \midrule    
    \vdots & \vdots & \ddots & \vdots \\
    \midrule
    \ast  & \ast & \dots &L(T^{(s)})
\end{array}
\right]} \mbox{ and } \sigma(L(T))= \bigcup_{j=1}^s \sigma(L(T^{(j)}))
\end{equation}
and hence  the eigenvalues of $L(T)$ are nonnegative. 

\begin{lem}\label{lem:sp-bl}
Let $T=(V,E)$ be  a tree  such that the  weights on the edges of $T$ are $s\times s$ lower  triangular matrices with positive diagonal entries.  For $1\leq j\leq s$, let $ T^{(j)}$   be the trees with positive edge weights induced by $T=(V,E)$ with $s\times s$ lower triangular matrix edge weights. Let $v\in V$ and     $B$ be a branch  at $v$.  For $1\leq j\leq s$, let  $ M_v(B)$  and $ M_v^{(j)}(B)$ be the bottleneck matrix of  the branch $B$ at $v$ in  $T$ and $T^{(j)}$, respectively. Then, 
$$\sigma(M_v(B))= \bigcup_{j=1}^{s}\sigma( M_v^{(j)}(B)) \mbox{ and } \rho(M_v(B))= \max_{1\leq j \leq s} \rho( M_v^{(j)}(B)).$$
\end{lem}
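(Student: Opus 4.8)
The plan is to leverage the permutation-equivalence machinery already established in Lemma~\ref{lem:perm-equv} and Remark~\ref{obs:similar}, applying it to the bottleneck matrix $M_v(B)$ rather than to the whole Laplacian. First I would fix a branch $B$ at $v$ consisting of, say, $k$ vertices, so that $M_v(B)$ is a $ks\times ks$ block matrix whose $(x,y)^{th}$ block is $\sum_{e\in \mathcal{P}(x,v)\cap\mathcal{P}(y,v)} W(e)^{-1}$, as established after Theorem~\ref{thm:non-sing-inv}. The central observation is that since each $W(e)$ is lower triangular with positive diagonal entries, each $W(e)$ is nonsingular and, by Remark~\ref{obs:similar}$(2)$, $W(e)^{-1}$ is again lower triangular with diagonal entries $w_{jj}(e)^{-1}$. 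Consequently every block of $M_v(B)$, being a sum of such inverses, is itself lower triangular, and its $(j,j)^{th}$ diagonal entry equals $\sum_{e\in\mathcal{P}(x,v)\cap\mathcal{P}(y,v)} w_{jj}(e)^{-1}$, which is precisely the $(x,y)^{th}$ entry of the bottleneck matrix $M_v^{(j)}(B)$ of the induced positive-weighted tree $T^{(j)}$.

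Next I would invoke Lemma~\ref{lem:perm-equv} with $m=k$ and the same $s$: set $X_{xy}$ equal to the $(x,y)^{th}$ block of $M_v(B)$, so that $X = M_v(B)$, and form $\widetilde{X}$ by the transposition rule $(\widetilde{X}_{lk})_{xy} = (X_{xy})_{lk}$. The lemma gives that $M_v(B)$ and $\widetilde{X}$ are permutation equivalent. Because each $X_{xy}$ is lower triangular, Remark~\ref{obs:similar}$(1)$ tells me that $\widetilde{X}$ is block lower triangular with diagonal blocks $\widetilde{X}_{11},\ldots,\widetilde{X}_{ss}$, and that
\begin{equation*}
\sigma(M_v(B)) = \sigma(\widetilde{X}) = \bigcup_{j=1}^{s} \sigma(\widetilde{X}_{jj}).
\end{equation*}
The remaining identification is that the $j^{th}$ diagonal block $\widetilde{X}_{jj}$ is exactly $M_v^{(j)}(B)$: indeed $(\widetilde{X}_{jj})_{xy} = (X_{xy})_{jj}$, which is the $(j,j)^{th}$ diagonal entry of the $(x,y)^{th}$ block of $M_v(B)$, and by the computation in the previous paragraph this equals the $(x,y)^{th}$ entry of $M_v^{(j)}(B)$. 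Substituting yields $\sigma(M_v(B)) = \bigcup_{j=1}^s \sigma(M_v^{(j)}(B))$, and taking the maximum modulus over this union gives the spectral-radius statement $\rho(M_v(B)) = \max_{1\le j\le s}\rho(M_v^{(j)}(B))$.

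The main obstacle, such as it is, lies less in any deep argument than in getting the two distinct block structures to line up cleanly: $M_v(B)$ is naturally a $k\times k$ array of $s\times s$ triangular blocks, whereas the object I want to read spectra from is the $s\times s$ array of $k\times k$ blocks $\widetilde{X}$. Lemma~\ref{lem:perm-equv} is precisely the bridge between these two groupings, so the care needed is bookkeeping: verifying that the bottleneck-matrix blocks really are triangular (which requires that a sum of lower triangular inverses stays lower triangular, immediate since lower triangular matrices form a ring) and that the diagonal-entry formula matches the induced-tree bottleneck matrix entry for entry. Once the block-triangular form of $\widetilde{X}$ is in hand, the spectral decomposition is automatic because the eigenvalues of a block triangular matrix are the union of the eigenvalues of its diagonal blocks, and permutation equivalence preserves the spectrum. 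I would therefore structure the write-up as: (i) triangularity and diagonal-entry identification of the blocks of $M_v(B)$; (ii) application of Lemma~\ref{lem:perm-equv} and Remark~\ref{obs:similar}$(1)$ to obtain the block lower triangular $\widetilde{X}$ and its spectrum; (iii) identification of $\widetilde{X}_{jj}$ with $M_v^{(j)}(B)$ and the resulting conclusion for both $\sigma$ and $\rho$.
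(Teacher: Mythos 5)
Your proposal is correct and follows essentially the same route as the paper: both identify each block of $M_v(B)$ as a lower triangular matrix whose $j^{th}$ diagonal entry is the corresponding entry of $M_v^{(j)}(B)$, and then apply Lemma~\ref{lem:perm-equv} together with Remark~\ref{obs:similar} to obtain the permutation-equivalent block lower triangular form with diagonal blocks $M_v^{(1)}(B),\ldots,M_v^{(s)}(B)$, from which both the spectrum union and the spectral radius identity follow. No gaps.
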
 
\begin{proof}
Let $T=(V,E)$ be  a tree and $W(e)= [w_{ij}(e)]$ denote the ($s\times s$ lower triangular  matrix) weight on the edge $e\in E$ such that $w_{jj}(e) > 0$ for all $1\leq j\leq s$.  Let $v\in V$ and $B$ be a branch  at $v$ consisting of $k$ vertices. By definition $T^{(j)}=(T,[w_{jj}(e)]_{e\in E})$ for $1\leq j\leq s$. Let  $ M_v(B)$  and $ M_v^{(j)}(B)$ be the bottleneck matrix of  the branch $B$ at $v$ in  $T$ and $T^{(j)}$, respectively. 

Let $x,y \in B$. For $1\leq j\leq s$, the bottleneck matrix $ M_v^{(j)}(B)$ is a $k \times k$ matrix,  and the entry at the $(x,y)^{th}$ position  of $M_v^{(j)}(B)$ is given by
\begin{equation}\label{eqn:bt-j}
\ds \sum_{e \in \mathcal{P}(x,v)\, \cap \, \mathcal{P}(y,v)} \frac{1}{w_{jj}(e)}.
\end{equation}
The bottleneck matrix $ M_v(B)$ is a $ks \times ks$ matrix  such that ($M_v(B)$ as a block  matrix)  the block at the $(x,y)^{th}$ position   of $M_v(B)$ is an $s\times s$ matrix $W_{xy}$ (say), and   is given by 
$$W_{xy}=\ds \sum_{e \in \mathcal{P}(x,v)\, \cap \, \mathcal{P}(y,v)} W(e)^{-1}.$$ 
Since the edge weights $\{W(e)\}_{e \in E}$ are lower triangular matrices, by  Remark~\ref{obs:similar} $(2)$, we see that $W_{xy}$ is an $s \times s$  lower triangular matrix and its  diagonal entries are given by 
$$(W_{xy})_{jj}=\ds \sum_{e \in \mathcal{P}(x,v)\, \cap \, \mathcal{P}(y,v)} \frac{1}{w_{jj}(e)} \mbox{ for } 1\leq j\leq s. $$
Thus, using Remark~\ref{obs:similar} $(1)$ for $M_v(B)$ and Equation~\eqref{eqn:bt-j}, we have 
 \begin{equation}\label{eqn:bottle-lt}
{\small
M_v(B)\simeq \widetilde{M_v(B)}=\left[
\begin{array}{c |c| c| c}
    M_v^{(1)}(B) & \mathbf{0}& \dots & \mathbf{0} \\
    \midrule
    \ast & M_v^{(2)}(B) & \dots & \mathbf{0} \\
    \midrule    
    \vdots & \vdots & \ddots & \vdots \\
    \midrule
    \ast  & \ast & \dots & M_v^{(s)}(B)
\end{array}
\right].}
\end{equation}
This completes the proof.
\end{proof}

\begin{theorem}
Let $T=(V,E)$ be  a tree  such that the  weights on the edges of $T$ are $s\times s$ lower  triangular matrices with positive diagonal entries.  For $1\leq j\leq s$, let $ T^{(j)}$   be the trees with positive edge weights induced by $T=(V,E)$ with $s\times s$ lower triangular matrix edge weights. 
\begin{enumerate}
\item[(a)] Let $v\in V$ with $\deg(v)=r$ and    $B_i$ be the branches  at $v$ for $1\leq i\leq r$. For $1\leq j\leq s$, let  $ M_v(B_i)$  and $ M_v^{(j)}(B_i)$ be the bottleneck matrix of  the branch $B_i$ at $v$ in  $T$ and $T^{(j)}$, respectively. Then,
$$\rho(M_v)=\ds   \max_{1\leq i \leq r \atop 1\leq j \leq s} \rho( M_v^{(j)}(B_i)) =  \max_{ 1\leq j \leq s} \rho( M_v^{(j)}).$$

\item[(b)]  The Perron value and the Perron branch are well-defined.  Moreover,   if $B$ is a (unique) Perron branch  at a  vertex $v$ in $T^{(j)}$ for all $1\leq j \leq s$, then $B$ is a  (unique)  Perron branch  at $v$ in $T$ and  
$$\rho(M_v)=\rho(M_v(B))= \max_{ 1\leq j \leq s} \rho( M_v^{(j)}(B)).$$ 
\end{enumerate}
\end{theorem}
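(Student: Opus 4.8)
The plan is to deduce both parts directly from Equation~\eqref{eqn:sp-radius} and Lemma~\ref{lem:sp-bl}; the only real work is to interchange two maxima and then, for the uniqueness claim, to track which induced tree witnesses the Perron value.

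For part (a), I would first invoke Equation~\eqref{eqn:sp-radius}, which gives $\rho(M_v)=\max_{1\leq i\leq r}\rho(M_v(B_i))$. Applying Lemma~\ref{lem:sp-bl} to each branch rewrites $\rho(M_v(B_i))=\max_{1\leq j\leq s}\rho(M_v^{(j)}(B_i))$. Substituting and noting that a maximum over a product index set may be taken in either order yields
$$\rho(M_v)=\max_{1\leq i\leq r}\max_{1\leq j\leq s}\rho(M_v^{(j)}(B_i))=\max_{1\leq i\leq r,\,1\leq j\leq s}\rho(M_v^{(j)}(B_i)).$$
For the second equality I would use that each $T^{(j)}$ is a tree with positive edge weights, so $M_v^{(j)}=(L_v^{(j)})^{-1}$ is block diagonal with diagonal blocks $M_v^{(j)}(B_i)$; hence $\rho(M_v^{(j)})=\max_{1\leq i\leq r}\rho(M_v^{(j)}(B_i))$, the scalar case of Equation~\eqref{eqn:sp-radius}. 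Taking the maximum over $j$ and interchanging the two maxima once more gives $\max_{i,j}\rho(M_v^{(j)}(B_i))=\max_{j}\rho(M_v^{(j)})$, completing (a).

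For the well-definedness in part (b), I would note that a lower triangular matrix with positive diagonal entries is nonsingular, since its determinant is the product of the diagonal entries; hence by Theorem~\ref{thm:det-nonsingular} the matrix $L_v$ is invertible and $M_v=L_v^{-1}$ exists. By Lemma~\ref{lem:sp-bl}, $\sigma(M_v(B_i))=\bigcup_{j}\sigma(M_v^{(j)}(B_i))$, and each $M_v^{(j)}(B_i)$ is an entrywise positive matrix whose spectral radius is, by Perron--Frobenius, a positive eigenvalue. Consequently $\rho(M_v(B_i))$ is a genuine positive eigenvalue of $M_v(B_i)$, and since there are finitely many branches at $v$ the maximum defining $\rho(M_v)$ is attained; thus the Perron value and the Perron branch are well-defined.

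Finally, for the main assertion of (b), suppose $B=B_{i_0}$ is a Perron branch at $v$ in $T^{(j)}$ for every $j$, i.e.\ $\rho(M_v^{(j)}(B))=\rho(M_v^{(j)})$ for all $j$. Combining Lemma~\ref{lem:sp-bl} with part (a) gives
$$\rho(M_v(B))=\max_{j}\rho(M_v^{(j)}(B))=\max_{j}\rho(M_v^{(j)})=\rho(M_v),$$
so $B$ is a Perron branch at $v$ in $T$. For uniqueness, assume in addition that $B$ is the unique Perron branch in each $T^{(j)}$, and suppose some branch $B'\neq B$ satisfies $\rho(M_v(B'))=\rho(M_v)$. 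By Lemma~\ref{lem:sp-bl} there is an index $j_1$ with $\rho(M_v^{(j_1)}(B'))=\rho(M_v(B'))=\rho(M_v)$; since $\rho(M_v^{(j_1)})\le\rho(M_v)$ by part (a), while $\rho(M_v^{(j_1)})\ge\rho(M_v^{(j_1)}(B'))=\rho(M_v)$, we obtain $\rho(M_v^{(j_1)}(B'))=\rho(M_v^{(j_1)})$, so $B'$ is a Perron branch at $v$ in $T^{(j_1)}$. Uniqueness in $T^{(j_1)}$ then forces $B'=B$, a contradiction. The step I expect to require the most care is this uniqueness argument: one must isolate the particular induced tree $T^{(j_1)}$ that witnesses the maximum for $B'$ and then chain the inequalities correctly to conclude that $B'$ is Perron there, so that the per-$j$ uniqueness hypothesis can be invoked.
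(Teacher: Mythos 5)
Your proposal is correct and follows essentially the same route as the paper: both parts rest on Lemma~\ref{lem:sp-bl} (equivalently Equation~\eqref{eqn:bottle-lt}) together with the block-diagonal structure of $M_v$ and an interchange of the two maxima, with well-definedness in (b) coming from the fact that each $M_v^{(j)}(B_i)$ is entrywise positive so its spectral radius is an eigenvalue. The only difference is that you spell out the uniqueness argument in (b) — isolating the index $j_1$ witnessing the maximum for a putative second Perron branch and chaining the inequalities — which the paper compresses into "follows from Equation~\eqref{eqn:sp-rd-lt}"; your expanded version is correct.
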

\begin{proof}
Let $v\in V$ with $\deg(v)=r$ and    $B_i$ be the branches  at $v$ for $1\leq i\leq r$. For $1\leq j\leq s$, let  $ M_v(B_i)$  and $ M_v^{(j)}(B_i)$ be the bottleneck matrix of  the branch $B_i$ at $v$ in  $T$ and $T^{(j)}$, respectively. Thus, for $1\leq i\leq r$,  using Equation~\eqref{eqn:bottle-lt}, we have 
$$
{\small
M_v=\left[
\begin{array}{c |c| c| c}
    M_v(B_1) & \mathbf{0}& \dots & \mathbf{0} \\
    \midrule
    \mathbf{0} & M_v(B_2) & \dots & \mathbf{0} \\
    \midrule    
    \vdots & \vdots & \ddots & \vdots \\
    \midrule
    \mathbf{0} & \mathbf{0}& \dots & M_v(B_r)
\end{array}
\right]} 
 \mbox{ and }
{\small
M_v(B_i)\simeq \left[
\begin{array}{c |c| c| c}
    M_v^{(1)}(B_i) & \mathbf{0}& \dots & \mathbf{0} \\
    \midrule
    \ast & M_v^{(2)}(B_i) & \dots & \mathbf{0} \\
    \midrule    
    \vdots & \vdots & \ddots & \vdots \\
    \midrule
    \ast  & \ast & \dots & M_v^{(s)}(B_i)
\end{array}
\right].}$$
Then, by Lemma~\ref{lem:sp-bl}, we have
\begin{equation}\label{eqn:perr}
\sigma(M_v(B_i))= \bigcup_{j=1}^{s}\sigma( M_v^{(j)}(B_i)) \mbox{ and } \rho(M_v(B_i))= \max_{1\leq j \leq s} \rho( M_v^{(j)}(B_i)),
\end{equation}
 and hence the spectral radius of $M_v$ is given by 
\begin{equation}\label{eqn:sp-rd-lt}
\rho(M_v)=\ds  \max_{1\leq i\leq r} \rho( M_v(B_i))= \max_{1\leq i \leq r \atop 1\leq j \leq s} \rho( M_v^{(j)}(B_i)) =  \max_{ 1\leq j \leq s} \rho( M_v^{(j)}).
\end{equation}
This proves part $(a).$

Next, for  $1\leq i\leq r$ and $1\leq j\leq s$, the eigenvalues of $M_v^{(j)}(B_i)$ are positive  and hence  $\rho(M_v^{(j)}(B_i))$ is necessarily an eigenvalue of $M_v^{(j)}(B_i)$. Therefore, by Equation~\eqref{eqn:perr}, the definition of Perron value and Perron branch in Section~\ref{sec:Lap-n-bot} are well-defined for trees  where weights on the edges are lower triangular matrices with positive diagonal entries. Hence,  part $(b)$ follows from  Equation~\eqref{eqn:sp-rd-lt}.
\end{proof}

We first prove  results for trees   where the edge weights are matrices of order $2\times 2$. Let $T=(V,E)$ be  a tree such that the  weights on the  edges of $T$ are    $2\times 2$ lower  triangular matrices   with positive diagonal entries.  Let
$T^{(1)}$ and $T^{(2)}$  be the  trees with positive edge weights induced by $T=(V,E)$ with $2\times 2$ lower triangular matrix edge weights.   Let $\mathcal{C}_{T^{(1)}}$ and  $\mathcal{C}_{T^{(2)}}$ denote the set of characteristic vertices of  $T^{(1)}$ and $T^{(2)}$, respectively. The strategy adopted to achieve our goal is as follows:  We consider all the possible cases for $\mathcal{C}_{T^{(1)}}$ and  $\mathcal{C}_{T^{(2)}}$, and for each of these cases  we  use Propositions~\ref{prop:ch-edge}~-~\ref{prop:perron-branch} for  trees $T^{(1)}$ and $T^{(2)}$ to show that  Results~\ref{result:1} and~\ref{result:2} hold  true. We begin   by considering  $\mathcal{C}_{T^{(1)}} \cap \mathcal{C}_{T^{(2)}} \neq \emptyset$ and $  \mathcal{C}_{T^{(1)}} \cap \mathcal{C}_{T^{(2)}} = \emptyset$ as separate cases.

\begin{lem}\label{lem:s=2-1}
Let $T=(V,E)$ be  a tree such that the  weights on the  edges of $T$ are    $2\times 2$ lower  triangular matrices   with positive diagonal entries.  Let $T^{(1)}$ and $T^{(2)}$  be the  trees with positive edge weights induced by $T=(V,E)$ with $2\times 2$ lower triangular matrix edge weights such that $\mathcal{C}_{T^{(1)}} \cap \mathcal{C}_{T^{(2)}} \neq \emptyset$. Then, one of the following cases occurs: 
\begin{enumerate}
\item[$1.$] There is a unique vertex $v$ such that there are two or more   Perron branches at $v$ in $T$. Moreover, if  $x$ is a vertex other than $v$, then the unique Perron branch at $x$ in $T$ is the branch which contains the vertex $v$.

\item[$2.$] There is  a unique  pair of vertices $u$ and $v$ with $u \sim v$  such that   the Perron branch  at $u$ in $T$ is  the branch containing $v$, while the Perron branch  at $v$  in $T$  is  the branch containing $u$.  Moreover, the unique Perron branch at any vertex $x$ in $T$ is the branch which contains at least one of the vertices $u$ or $v$.
\end{enumerate}
\end{lem}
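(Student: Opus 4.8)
The plan is to reduce the Perron-branch structure of $T$ at each vertex to that of the two induced weighted trees $T^{(1)}$ and $T^{(2)}$, and then to use the shared characteristic vertex to pin down the answer. The basic tool is Lemma~\ref{lem:sp-bl}: for any vertex $v$ and any branch $B$ at $v$ one has $\rho(M_v(B))=\max\{\rho(M_v^{(1)}(B)),\rho(M_v^{(2)}(B))\}$, and consequently $\rho(M_v)=\max\{\rho(M_v^{(1)}),\rho(M_v^{(2)})\}$. From this I would record the following dichotomy at a fixed vertex $v$: if $\rho(M_v^{(1)})>\rho(M_v^{(2)})$ then a branch at $v$ is a Perron branch of $T$ if and only if it is a Perron branch of $T^{(1)}$, since the $T^{(2)}$-value of every branch at $v$ is then strictly dominated; the reversed inequality gives the symmetric statement; and if $\rho(M_v^{(1)})=\rho(M_v^{(2)})$ then the Perron branches of $T$ at $v$ are exactly the union of the Perron branches of $T^{(1)}$ and of $T^{(2)}$ at $v$. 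Thus the Perron branches of $T$ at $v$ are obtained from the known Perron-branch structures of $T^{(1)}$ and $T^{(2)}$ by either selecting the dominant tree or forming a union.

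Next I would fix a vertex $z\in\mathcal{C}_{T^{(1)}}\cap\mathcal{C}_{T^{(2)}}$ and prove the key claim: for every vertex $x\neq z$, the unique Perron branch of $T$ at $x$ is $B_x(z)$, the branch at $x$ containing $z$. The point is that both induced trees already point to this single branch. Indeed, for each $j$, if $x\notin\mathcal{C}_{T^{(j)}}$ then by Proposition~\ref{prop:perron-branch} the unique Perron branch of $T^{(j)}$ at $x$ contains the characteristic vertices of $T^{(j)}$, in particular $z$, and so equals $B_x(z)$; while if $x\in\mathcal{C}_{T^{(j)}}$ with $x\neq z$ then $T^{(j)}$ is Type II with characteristic edge $\{z,x\}$, and by Proposition~\ref{prop:ch-edge} its unique Perron branch at $x$ is again $B_x(z)$. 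Hence both $T^{(1)}$ and $T^{(2)}$ have the single Perron branch $B_x(z)$ at $x$, and, crucially, since these two singletons coincide, the dichotomy of the first paragraph forces the Perron branches of $T$ at $x$ to be the single branch $B_x(z)$ in all three cases, including the equality case where a union is taken.

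With the key claim in hand, the conclusion follows from a single dichotomy at $z$ itself, according to whether $z$ has two or more Perron branches in $T$ or exactly one. In the first case every other vertex has a unique Perron branch containing $z$, so $z$ is the unique vertex with two or more Perron branches and each $x\neq z$ has its unique Perron branch $B_x(z)$ containing $z$; this is conclusion $1$. In the second case the unique Perron branch of $T$ at $z$ has the form $B_z(w)$ for a neighbour $w$; applying the key claim to $x=w$ shows that the unique Perron branch of $T$ at $w$ is $B_w(z)$, so $\{z,w\}$ is an adjacent pair pointing at each other, while every other vertex points through $B_x(z)$ toward $z$; this is conclusion $2$ with characteristic-like edge $\{z,w\}$. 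Uniqueness of the pair in conclusion $2$ is immediate, since no two adjacent vertices both different from $z$ can point at each other: their Perron branches both contain $z$, and at most one of them can have its Perron branch containing the other.

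The main obstacle is precisely the equality case $\rho(M_v^{(1)})=\rho(M_v^{(2)})$, where the Perron branches of $T$ form a union and could a priori be numerous; the whole argument hinges on the observation that at every vertex other than the common characteristic vertex $z$ the two induced trees point to the same branch $B_x(z)$, so the union collapses to a single branch and uniqueness is preserved. Organizing the proof around this observation, rather than around the finer split on the types of $T^{(1)}$ and $T^{(2)}$ together with the comparison of $\rho(M_z^{(1)})$ and $\rho(M_z^{(2)})$, is what keeps the bookkeeping manageable; that finer split is only needed if one wants to state explicitly which of the two conclusions occurs.
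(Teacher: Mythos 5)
Your proof is correct, and it is organized quite differently from the paper's. The paper proves this lemma by enumerating the four possible configurations of $\mathcal{C}_{T^{(1)}}$ and $\mathcal{C}_{T^{(2)}}$ with nonempty intersection (two equal singletons, two equal edges, an edge meeting a vertex, two edges sharing an endpoint) and, within each case, splitting further on the comparison of $\rho(M_v^{(1)})$ with $\rho(M_v^{(2)})$ to decide which conclusion occurs and to verify the behaviour at all other vertices. You instead isolate a single common characteristic vertex $z$ and prove once and for all that every $x\neq z$ has $B_x(z)$ as the unique Perron branch of both $T^{(1)}$ and $T^{(2)}$ at $x$ (via Propositions~\ref{prop:ch-edge} and~\ref{prop:perron-branch}), so that the max formula $\rho(M_x(B))=\max_j\rho(M_x^{(j)}(B))$ forces $B_x(z)$ to be the unique Perron branch of $T$ at $x$ regardless of which induced tree dominates — including the delicate equality case, where the union of the two singleton Perron-branch sets still collapses to one branch. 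The whole lemma then reduces to a dichotomy at $z$ alone, and your uniqueness argument for the adjacent pair (two vertices other than $z$ cannot point at each other, since both their Perron branches contain $z$) is clean and correct. What the paper's finer case split buys, and yours deliberately omits, is the explicit identification of which conclusion occurs in terms of the scalar data (e.g.\ in its Case 3 the answer depends on whether $\rho(M_v^{(1)})\le\rho(M_v^{(2)})$), which the authors later reuse, for instance in Remark~\ref{rem:1-2-lt}; your argument yields the existence and uniqueness statements with far less bookkeeping but would need the extra comparison at $z$ to recover that information.
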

\begin{proof}
We  consider different choices of $\mathcal{C}_{T^{(1)}}$ and  $\mathcal{C}_{T^{(2)}}$  with $\mathcal{C}_{T^{(1)}} \cap \mathcal{C}_{T^{(2)}} \neq \emptyset$, and prove that the result is true  in each of these cases. \\

$\underline{\textbf{Case 1:}} \ \;$Let  $ \mathcal{C}_{T^{(1)}} = \mathcal{C}_{T^{(2)}}=\{v\}$, {\it i.e.}, the vertex $v$ is the characteristic vertex of   $ T^{(1)} \mbox{ and  } T^{(2)}$. There exist  branches $B_{i_1}$ and $B_{i_2}$ at $v$ such that $\rho(M_{v}^{(1)})= \rho(M_{v}^{(1)}(B_{i_1}))= \rho(M_{v}^{(1)}(B_{i_2})).$ Similarly, there exist  branches $B_{j_1}$ and $B_{j_2}$ at $v$  such that $\rho(M_{v}^{(2)})= \rho(M_{v}^{(2)}(B_{j_1}))= \rho(M_{v}^{(2)}(B_{j_2})).$  If $ \rho(M_{v}^{(1)}) \geq \rho(M_{v}^{(2)})$, then by Equation~\eqref{eqn:perr} the branches $B_{i_1} \mbox{ and } B_{i_2}$ are Perron branches at $v$  for  $T$ and $\rho(M_{v})= \rho(M_{v}(B_{i_1}))= \rho(M_{v}(B_{i_2})).$ Similarly, if $ \rho(M_{v}^{(1)}) \leq \rho(M_{v}^{(2)})$, then $B_{j_1} \mbox{ and } B_{j_2}$ are Perron branches at $v$  for  $T$  and $\rho(M_{v})= \rho(M_{v}(B_{j_1}))= \rho(M_{v}(B_{j_2})).$  Further, if $x\neq v$, then $B_x(v)$ is the  unique Perron branch at $x$ in $ T^{(1)} \mbox{ and  } T^{(2)}$.  Thus, $B_x(v)$ is the  unique Perron branch  at $x$ in $T$. 

Therefore, $v$ is the unique vertex such that there are two or more   Perron branches  at $v$ and $B_x(v)$ is the unique  Perron branch  at $x$ in $T$, whenever $x\neq v$.\\

$\underline{\textbf{Case 2:}} \ \;$Let  $ \mathcal{C}_{T^{(1)}} = \mathcal{C}_{T^{(2)}}=\{u,v\}$, {\it i.e.}, the edge between the  vertices $u$ and $v$ is the characteristic edge of  $ T^{(1)} \mbox{ and  } T^{(2)}$. Then,  $B_u(v)$ is the unique  Perron branch  at $u$ in $T^{(j)}$  and $B_v(u)$ is the unique  Perron branch  at $v$ in $T^{(j)}$ for    $j=1,2$. Thus, 
$$\rho(M_u)=\rho(M_u(v))= \max_{j=1,2} \rho(M_{u}^{(j)}(v)) \mbox{ and } \rho(M_v)=\rho(M_v(u))= \max_{j=1,2} \rho(M_{v}^{(j)}(u)).$$
Hence,  $B_u(v)$ is the unique  Perron branch  at $u$ in $T$, while   $B_v(u)$ is the unique  Perron branch  at $v$ in $T$. 

Let  $x\in V$ such that $x\neq u$ and $x\neq v$.  If  $B$ is the branch at $x$ containing $u$ and $v$, then $B$ is the unique Perron branch   at $x$ in $T^{(j)}$ for $j=1,2$. Hence $B$ is the unique Perron branch   at $x$ in $T$.\\

$\underline{\textbf{Case 3:}}$ Let $ \mathcal{C}_{T^{(1)}} = \{u,v\}$ and $\mathcal{C}_{T^{(2)}}=\{v\}$, {\it i.e.}, the edge between the vertices $u$ and $v$ is the characteristic edge of  $ T^{(1)}$ and $v$ is the characteristic vertex of  $  T^{(2)}$. Thus, $B_u(v)$ is the unique  Perron branch   at $u$ in  $T^{(j)}$ for $j=1,2$. Hence 
\begin{equation}\label{eqn:subcase 1.3}
\rho(M_u)=\rho(M_u(v))= \max_{j=1,2} \rho(M_{u}^{(j)}(v)),
\end{equation}
and $B_u(v)$ is the unique  Perron branch   at $u$ in $T$. 
Further, $B_v(u)$ is the unique  Perron branch  at $v$ in $T^{(1)}$, {\it i.e.,} $\rho(M_v^{(1)})= \rho(M_{v}^{(1)}(u))$  and there exist branches $B_{j_1}$ and $B_{j_2}$ at $v$  such that $\rho(M_{v}^{(2)})= \rho(M_{v}^{(2)}(B_{j_1}))= \rho(M_{v}^{(2)}(B_{j_2})).$ Therefore, the following cases arise:
\begin{itemize}
\item If $\rho(M_{v}^{(1)}) \leq \rho(M_{v}^{(2)})$, then $\rho(M_{v}^{(2)})=\rho(M_{v})= \rho(M_{v}(B_{j_1}))= \rho(M_{v}(B_{j_2})).$ Thus,  there are two or more   Perron branches of $T$ at $v$ and the uniqueness of the vertex $v$ follows from an argument similar to that in Case $1$.

\item If $\rho(M_{v}^{(1)}) > \rho(M_{v}^{(2)})$, then  $\rho(M_{v}^{(1)})=\rho(M_{v})= \rho(M_{v}(u))$ and hence $B_v(u)$ is the unique  Perron branch  at $v$ in $T$. By Equation~\eqref{eqn:subcase 1.3},   $B_u(v)$ is the unique  Perron branch  at $u$ in $T$.  The uniqueness of the vertices $u$ and $v$ follows from an argument similar to that in Case $2$.
\end{itemize}

$\underline{\textbf{Case 4:}}$ Let $ \mathcal{C}_{T^{(1)}} = \{u,v\}$ and $\mathcal{C}_{T^{(2)}}=\{v,w\}$, {\it i.e.}, the edge between the vertices $u$ and $v$ is the characteristic edge of  $ T^{(1)}$ and the edge between the  vertices $v$ and $w$ is the characteristic edge of $  T^{(2)}$. 
Observe that, $B_u(v)$ is the unique  Perron branch at $u$ in $T^{(1)}$ and $T^{(2)}$. Similarly, $B_w(v)$ is the unique  Perron branch at $w$ in $T^{(1)}$ and $T^{(2)}$. Hence, $B_u(v)$ is the unique  Perron branch at $u$ in $T$, while $B_w(v)$ is the unique  Perron branch at $w$ in $T$ and
\begin{equation}\label{eqn:subcase 1.4-1}
\rho(M_u)=\rho(M_u(v)) \mbox{ and } \rho(M_w)=\rho(M_w(v)).
\end{equation}
Further, $B_v(u)$ is the unique  Perron branch at $v$ in $T^{(1)}$ and $B_v(w)$ is the unique  Perron branch at $v$ in $T^{(2)}$. Hence
\begin{equation}\label{eqn:subcase 1.4-2}
\rho(M_v)= \max \{\rho(M_v(u)),\rho(M_v(w)) \}.
\end{equation}
Therefore, the following cases arise:
\begin{itemize}
\item   If $\rho(M_v(u))=\rho(M_v(w))$, then by Equation~\eqref{eqn:subcase 1.4-2}, $\rho(M_v) =\rho(M_v(u))=\rho(M_v(w))$. Thus,  there are two or more   Perron branches  at $v$ in $T$ and the uniqueness of the vertex $v$ follows from an argument similar to that in Case $1$.

\item If $\rho(M_v(u))>\rho(M_v(w))$, then  Equations~\eqref{eqn:subcase 1.4-1} and~\eqref{eqn:subcase 1.4-2} yield that $B_u(v)$ is the unique  Perron branch  at $u$ in $T$ and $B_v(u)$ is the unique  Perron branch  at $v$ in $T$. The uniqueness of the vertices $u$ and $v$ follows from an argument similar to that in Case $2$.

\item If $\rho(M_v(u))<\rho(M_v(w))$, then  Equations~\eqref{eqn:subcase 1.4-1} and~\eqref{eqn:subcase 1.4-2} yield that $B_v(w)$ is the unique  Perron branch  at $v$ in $T$  and $B_w(v)$ is the unique  Perron branch  at $w$ in $T$. The uniqueness of the vertices $v$ and $w$ follows from an argument similar to that in Case $2$.
\end{itemize}
This completes the proof.
\end{proof}

\begin{lem}\label{lem:s=2-2}
Let $T=(V,E)$ be  a tree such that the  weights on the edges of $T$ are    $2\times 2$ lower  triangular matrices   with positive diagonal entries. Let $T^{(1)}$ and $T^{(2)}$  be the  trees with positive edge weights induced by $T=(V,E)$ with $2\times 2$ lower triangular matrix edge weights such that $\mathcal{C}_{T^{(1)}} \cap \mathcal{C}_{T^{(2)}} =\emptyset$. Then one of the following cases occurs: 
\begin{enumerate}
\item[$1.$] There is a unique vertex $v$ such that there are two or more   Perron branches at $v$ in $T$. Moreover, if  $x$ is a vertex other than $v$, then the unique Perron branch at $x$ in $T$ is the branch which contains the vertex $v$.

\item[$2.$] There is  a unique  pair of vertices $u$ and $v$ with $u \sim v$  such that   the Perron branch  at $u$ in $T$ is  the branch containing $v$, while the Perron branch  at $v$  in $T$  is  the branch containing $u$.  Moreover, the unique Perron branch at any vertex $x$ in $T$ is the branch which contains at least one of the vertices $u$ or $v$.
\end{enumerate}
\end{lem}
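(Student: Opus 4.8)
The plan is to reduce the matrix problem to the two scalar trees $T^{(1)}$ and $T^{(2)}$ and to locate the characteristic-like structure of $T$ as a ``crossover point'' between their (disjoint) characteristic sets. The starting observation is the lifting identity from Equation~\eqref{eqn:perr}: a branch $B$ at a vertex $v$ is a Perron branch of $T$ if and only if $B$ is a Perron branch at $v$ in $T^{(j)}$ for some index $j\in\{1,2\}$ with $\rho(M_v^{(j)})=\max_{j'}\rho(M_v^{(j')})$. Thus at every vertex the Perron branches of $T$ are governed by the \emph{dominant coordinate}, i.e.\ by the sign of $f(v):=\rho(M_v^{(1)})-\rho(M_v^{(2)})$, together with the Perron-branch structure of the scalar trees supplied by Propositions~\ref{prop:ch-edge}--\ref{prop:perron-branch}.

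The key tool I would establish first is a strict radial monotonicity: for each $j$, the quantity $\rho(M_v^{(j)})$ strictly increases as $v$ moves one step away from $\mathcal{C}_{T^{(j)}}$ along any path. Indeed, if $v'$ is one step farther from $\mathcal{C}_{T^{(j)}}$ than $v$, then by Proposition~\ref{prop:perron-branch} the Perron branch at each of $v,v'$ in $T^{(j)}$ points toward $\mathcal{C}_{T^{(j)}}$, and the Perron branch at $v$ sits strictly inside that at $v'$; applying Proposition~\ref{prop:big-branch-scalar} to $T^{(j)}$ yields $\rho(M_v^{(j)})<\rho(M_{v'}^{(j)})$. The initial step, where $v\in\mathcal{C}_{T^{(j)}}$, is handled the same way after noting that at least one Perron branch at $v$ lies strictly inside $B_{v'}(v)$.

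Now let $a\in\mathcal{C}_{T^{(1)}}$ and $b\in\mathcal{C}_{T^{(2)}}$ be the closest vertices of the two characteristic sets, and consider the path $a=p_0\sim p_1\sim\cdots\sim p_m=b$. Since this path leaves $\mathcal{C}_{T^{(1)}}$ at $p_0$ and meets $\mathcal{C}_{T^{(2)}}$ only at $p_m$, the monotonicity lemma makes $\rho(M_{p_i}^{(1)})$ strictly increasing and $\rho(M_{p_i}^{(2)})$ strictly decreasing in $i$, so $f(p_i)$ is strictly increasing. For an interior vertex $p_i$ with $0<i<m$ we have $p_i\notin\mathcal{C}_{T^{(1)}}\cup\mathcal{C}_{T^{(2)}}$, so by Proposition~\ref{prop:perron-branch} the $T^{(1)}$-Perron branch points toward $a$ and the $T^{(2)}$-Perron branch toward $b$; with the lifting identity this produces exactly one Perron branch of $T$ at $p_i$ when $f(p_i)\neq 0$ (toward $a$ if $f(p_i)>0$, toward $b$ if $f(p_i)<0$) and two when $f(p_i)=0$. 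Strict monotonicity of $f$ therefore yields a single crossover: either a unique interior index $i^\ast$ with $f(p_{i^\ast})=0$, giving a characteristic-like vertex $p_{i^\ast}$ (Case~1), or a unique edge $p_{i^\ast}p_{i^\ast+1}$ across which $f$ changes sign, giving a characteristic-like edge (Case~2). The endpoints are treated directly: at $a$ one has $a\in\mathcal{C}_{T^{(1)}}$ while the $T^{(2)}$-Perron branch points toward $b$, so $f(a)\ge 0$ forces two or more Perron branches of $T$ at $a$ (Case~1 with $v=a$), whereas $f(a)<0$ leaves $B_a(p_1)$ as the unique Perron branch; symmetrically at $b$.

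Finally, for uniqueness and the ``points toward $\mathcal{C}_T$'' conclusion I would treat an arbitrary off-path vertex $x$. From such an $x$ both $a$ and $b$ are reached by first moving onto the path, so by Proposition~\ref{prop:perron-branch} (and Proposition~\ref{prop:ch-edge} when $x$ itself lies in a characteristic set, as for the far endpoint of a characteristic edge) the $T^{(1)}$- and $T^{(2)}$-Perron branches at $x$ point in the \emph{same} direction, namely toward the path; by the lifting identity this common branch is the unique Perron branch of $T$ at $x$, and it points toward the crossover, i.e.\ toward $\mathcal{C}_T$. This simultaneously shows that no off-path vertex can carry two Perron branches, so the crossover located above is the unique characteristic-like vertex or edge. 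I expect the main obstacle to be the careful bookkeeping at the endpoint and characteristic-set vertices — verifying that the crossover analysis degenerates correctly when $i^\ast\in\{0,m\}$, or when one of $\mathcal{C}_{T^{(1)}},\mathcal{C}_{T^{(2)}}$ is an edge with one endpoint off the path — rather than the monotonicity argument itself, which is clean.
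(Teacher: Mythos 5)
Your proposal is correct and follows essentially the same route as the paper: both rest on the lifting identity $\rho(M_v(B))=\max_j\rho(M_v^{(j)}(B))$ together with the strict monotonicity of $\rho(M_{p_i}^{(1)})$ and $\rho(M_{p_i}^{(2)})$ along the path joining $\mathcal{C}_{T^{(1)}}$ and $\mathcal{C}_{T^{(2)}}$ (the paper's Equation~\eqref{eqn:ineq}), locating the characteristic-like vertex or edge at the unique sign change of $f$ and verifying off-path vertices via the coincidence of the two scalar Perron branches. The only point needing the extra care you already flag is the endpoint case where, say, $\mathcal{C}_{T^{(1)}}$ is an edge $\{a,a'\}$ with $a'$ off the path and $f(a)>0$ strictly: there $a$ carries a unique Perron branch $B_a(a')$ rather than two, and the correct conclusion is Case~2 with characteristic-like edge $aa'$ rather than Case~1 at $a$.
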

\begin{proof}

Let $  \mathcal{C}_{T^{(1)}}=\{v\}$ and   $\mathcal{C}_{T^{(2)}} = \{x,y\}$, where $v\neq x$ and $v\neq y$.  Without loss of generality, assume  that $y \notin \mathcal{P}(v,x)$,  where $\mathcal{P}(v,x)$ is the path joining the vertices $v$ and  $x$ such that  $\mathcal{P}(v,x): v=v_1 \sim v_2 \sim \cdots \sim  v_{p-1}\sim v_p=x$. Since $v$ is the characteristic  vertex of $T^{(1)}$, there exists a vertex $u$ adjacent to $v$ with $u \neq v_1$ such that $B_v(u)$ is a Perron branch at $v$ in $T^{(1)}$ and  $\rho(M_v^{(1)})=\rho(M_v^{(1)}(u))$. Thus,   $B_{v_i}(u)$ is the unique Perron branch at $v_i$ in  $T^{(1)}$, while  $B_{v_i}(y)$ is the unique Perron branch at $v_i$ in  $T^{(2)}$ for  $1\leq i\leq p $. Hence
 \begin{equation}\label{eqn:3.11}
 \rho(M_{v_i}^{(1)})=  \rho(M_{v_i}^{(1)}(u)) \mbox{ and } \rho(M_{v_i}^{(2)})= \rho(M_{v_i}^{(2)}(y)) \mbox{ for }  1\leq i\leq p.
 \end{equation}
Next, since $B_{v_i}(u) \subsetneq B_{v_{i+1}}(u) $ and  $B_{v_i}(y) \supsetneq B_{v_{i+1}}(y)$ for  $1\leq i\leq p-1$,  using Proposition~\ref{prop:big-branch-scalar} and Equation~\eqref{eqn:3.11}, we have
\begin{equation}\label{eqn:ineq}
\begin{cases}
\vspace*{.2cm}
\rho(M_v^{(1)})= \rho(M_{v_1}^{(1)})< \rho(M_{v_2}^{(1)})<\cdots< \rho(M_{v_{p-1}}^{(1)}) < \rho(M_{v_p}^{(1)})=\rho(M_x^{(1)}), \\
\rho(M_v^{(2)})= \rho(M_{v_1}^{(2)})> \rho(M_{v_2}^{(2)})>\cdots> \rho(M_{v_{p-1}}^{(2)}) > \rho(M_{v_p}^{(2)})= \rho(M_x^{(2)}).
\end{cases}
\end{equation}
Therefore, the following cases arise.\\

$\underline{\textbf{Case 1:}} \ \;$Let $\rho(M_v^{(1)})\geq \rho(M_v^{(2)})$. Using Equation~\eqref{eqn:ineq}, we have
\begin{equation}\label{eqn:3-1-6}
\rho(M_{v_i})= \max\{\rho(M_{v_i}^{(1)}),\rho(M_{v_i}^{(2)}) \}=\rho(M_{v_i}^{(1)}) \mbox{ for }  1\leq i\leq p.
\end{equation} 
Since $v$ is the characteristic  vertex of $T^{(1)}$,  there exist branches $B_{i_1}$ and $B_{i_2}$ at $v$  such that $\rho(M_{v}^{(1)})= \rho(M_{v}^{(1)}(B_{i_1}))= \rho(M_{v}^{(1)}(B_{i_2}))$. By  Equation~\eqref{eqn:3-1-6}, we have  $\rho(M_v)= \rho(M_{v}^{(1)}) $ and hence 
 $$\rho(M_{v})= \rho(M_{v}(B_{i_1}))= \rho(M_{v}(B_{i_2})).$$  
To show the uniqueness of the vertex $v$, let us consider the branch    $B_w(v)$, where $w\neq v$. If $w\neq v_i$ for $i=2,3,\ldots, p$, then $x \in B_w(v)$. Hence $ B_w(v)= B_w(x )$. Thus, $B_w(v)$ is the unique Perron branch at $w$ in  $ T^{(1)} \mbox{ and  } T^{(2)}$, and therefore, $B_w(v)$ is the unique Perron branch at $w$ in $T$. Next, if $w= v_i$ for $i=2,3,\ldots, p$,  using Equation~\eqref{eqn:ineq} and the assumption for this case, we have 
 $$\rho(M_{v_i}^{(1)}) > \rho(M_{v}^{(1)})  \geq \rho(M_{v}^{(2)}) >  \rho(M_{v_i}^{(2)}). $$
This implies that $\rho(M_{w})= \max \{\rho(M_{w}^{(1)}),\rho(M_{w}^{(2)})\}= \rho(M_{w}^{(1)})=\rho(M_{w}^{(1)}(v))= \rho(M_{w}(v)).$ Thus, $B_w(v)$ is the unique Perron branch at $w$ in $T$.

Therefore, $v$ is the unique vertex of $T$ such that there are two or more   Perron branches  at $v$ in $T$ and for any $w \neq v$,  $B_w(v)$ is the  unique Perron branch  at $w$ in $T$. \\

$\underline{\textbf{Case 2:}} \ \;$Let  $\rho(M_x^{(2)})\geq \rho(M_x^{(1)})$.  Using Equation~\eqref{eqn:ineq}, we have
\begin{equation}\label{eqn:3-1-7}
\rho(M_{v_i})= \max\{\rho(M_{v_i}^{(1)}),\rho(M_{v_i}^{(2)}) \}=\rho(M_{v_i}^{(2)}) \mbox{ for }  1\leq i\leq p.
\end{equation}
Since  the edge between the vertices $x$ and $y$ is the characteristic edge of $  T^{(2)}$,  $B_x(y)$ is the unique Perron branch  at $x$ in  $T^{(2)}$ and hence $\rho(M_{x}^{(2)}) = \rho(M_{x}^{(2)}(y))$. By Equation~\eqref{eqn:3-1-7},  $\rho(M_x)= \rho(M_{x}^{(2)}) = \rho(M_{x}^{(2)}(y))$ which implies that  $B_x(y)$ is the unique Perron branch  at $x$ in $T$.  Since $v\in B_y(x)$,  $ B_y(x)$ is the unique Perron branch at $y$ in  $  T^{(j)}$ for   $j=1,2$ and hence $ B_y(x)$ is the Perron branch  at  $y$ in $T$.   
 
Let $w$ be a vertex  such that $w\neq x$ and $w\neq y$. If $B$ is a branch at $w$  in $T$ containing $x$ and $y$, it can be shown that $B$ is the unique Perron branch at $w$ in  $T$  by an argument similar to that in Case $1$.\\

$\underline{\textbf{Case 3:}} \ \;$ Let $\rho(M_v^{(1)}) < \rho(M_v^{(2)})$ and $\rho(M_x^{(1)})> \rho(M_x^{(2)})$.  By Equation~\eqref{eqn:ineq}, $\rho(M_{v_{i}}^{(1)})$ is increasing and  $\rho(M_{v_{i}}^{(1)})$ is decreasing with respect to $i=1,2,\ldots, p$. Then,  one  of the following cases occurs:
\begin{enumerate}
\item[(a)] There exists a  unique vertex $v_{i_0}$ for some $2\leq i_0\leq p-1$ such that $\rho(M_{v_{i_0}}^{(1)})=\rho(M_{v_{i_0}}^{(2)}).$

\item[(b)] There exists a unique pair of  vertices $v_{i_0}$ and $v_{i_0+1}$ for some $1\leq i_0\leq p-1$ such that  $\rho(M_{v_{i_0}}^{(1)})<\rho(M_{v_{i_0}}^{(2)})$ and  $\rho(M_{v_{i_0+1}}^{(1)})>\rho(M_{v_{i_0+1}}^{(2)}).$
\end{enumerate}

For case (a), let $B_{j_1} =B_{v_{i_0}}(v)$ and $B_{j_2} =B_{v_{i_0}}(x)$. Then $B_{j_1} \neq B_{j_2}$ and $B_{j_1}$ is the unique Perron branch at $v_{i_0}$ in $T^{(1)}$, while $B_{j_2}$ is the unique Perron branch at $v_{i_0}$ in $T^{(2)}$. Thus, $\rho(M_{v_{i_0}}^{(1)})=\rho(M_{v_{i_0}}^{(1)}(B_{j_1}))$ and $\rho(M_{v_{i_0}}^{(2)})=\rho(M_{v_{i_0}}^{(2)}(B_{j_2}))$ and hence by the hypothesis, we get $\rho(M_{v_{i_0}}^{(1)})=\rho(M_{v_{i_0}}^{(2)})= \rho(M_{v_{i_0}}^{(1)}(B_{j_1}))= \rho(M_{v_{i_0}}^{(2)}(B_{j_2})).$ Therefore, by Equation~\eqref{eqn:sp-rd-lt},  we have
$$\rho(M_{v_{i_0}})=\rho(M_{v_{i_0}}(B_{j_1}))=\rho(M_{v_{i_0}}(B_{j_2})).$$
 Now we  show that $B_w(v_{i_0})$ is the unique Perron branch of $T$ at $w$, whenever $w\neq v_{i_0}$. If $w\neq v_i$ for $i=1,2,\ldots, p$, then $v,x \in B_w(v_{i_0})$. Thus, $B_w(v_{i_0})$ is the unique Perron branch at $w$ in  $ T^{(1)} \mbox{ and  } T^{(2)}$, and  hence $B_w(v_{i_0})$ is the unique Perron branch at $w$ in  $T$. If $w=v_i$ for $1\leq i < i_0$, then  $\rho(M_{v_{i}}^{(1)})<\rho(M_{v_{i}}^{(2)})$.   Thus $\rho(M_{v_{i}})=\rho(M_{v_{i}}^{(2)})$. Since $B_{v_i}(v_{i_0}) \ = B_{v_i}(x)$ is the unique Perron branch  at $v_i$ in  $T^{(2)}$,   $\rho(M_{v_{i}}^{(2)})=\rho(M_{v_{i}}^{(2)}(v_{i_0}))$ and hence
 $$\rho(M_{v_{i}})=\rho(M_{v_{i}}^{(2)})=\rho(M_{v_{i}}^{(2)}(v_{i_0})) \mbox{ for }  1\leq i < i_0.$$ 
Therefore,   $B_{v_i}(v_{i_0})$ is the unique Perron branch  at $v_i$ in  $T$ for  $1\leq i < i_0$. It is easy to see that a similar assertion can be made for $w=v_i$ for $i_0 < i\leq p.$

For  case (b), using $\rho(M_{v_{i_0}}^{(1)})<\rho(M_{v_{i_0}}^{(2)})$ and that $B_{v_{i_0}}(v_{i_0 +1})= B_{v_{i_0}}(x)$ is the unique Perron branch  at $v_{i_0}$ in  $T^{(2)}$, we have 
$$\rho(M_{v_{i_0}})=\rho(M_{v_{i_0}}^{(2)})= \rho(M_{v_{i_0}}^{(2)} (v_{i_0 +1})). $$
 Hence $B_{v_{i_0}}(v_{i_0 +1})$ is the unique Perron branch  at  $v_{i_0}$ in $T$. Similarly, using $\rho(M_{v_{i_0+1}}^{(1)})>\rho(M_{v_{i_0+1}}^{(2)})$ and the fact that $B_{v_{i_0+1}}(v_{i_0}) = B_{v_{i_0+1}}(v)$ is the unique Perron branch  at  $v_{i_0+1}$ in $T^{(1)}$, we have $B_{v_{i_0+1}}(v_{i_0})$ is the unique Perron branch  at  $v_{i_0+1}$ in $T$. Further, if  $w$ is a vertex other than $v_{i_0}$ and  $v_{i_0+1} $,  it can be seen that the unique Perron branch  at $w$ in $T$ is the branch which contains $ v_{i_0}$ and  $v_{i_0+1}$ by proceeding in a manner  similar to that in  case (a).

The other possible cases are  ($i$) $  \mathcal{C}_{T^{(1)}}=\{v\}$ and   $\mathcal{C}_{T^{(2)}} = \{x\}$, ($ii$)  $  \mathcal{C}_{T^{(1)}}=\{u,v\}$ and   $\mathcal{C}_{T^{(2)}} = \{x,y\}$. It can be seen that the proof follows analogously to the   above cases and hence we omit the details. Combining the conclusion from all the  above cases,  the desired result follows. 
\end{proof}

By combining the results of Lemmas~\ref{lem:s=2-1} and~\ref{lem:s=2-2}, we have shown that  Results~\ref{result:1} and~\ref{result:2}  hold  if the edge weights of a tree  are  $2 \times 2$ lower triangular matrices with positive diagonal entries.  Before proceeding further, we state a few  observations  from the  proof of  Lemmas~\ref{lem:s=2-1} and~\ref{lem:s=2-2} in the following remark.
  
\begin{rem}\label{rem:1-2-lt}
\begin{enumerate}
\item[$1.$] The Results~\ref{result:1} and~\ref{result:2} are valid, if the weights on the edges of the tree $T$ are    $2 \times 2$ lower triangular  matrices  with positive diagonal entries. 

\item[$2.$] The characteristic-like vertex (or vertices) of $T$  lie in the path joining characteristic vertices of $T^{(1)}$ and  $T^{(2)}$.

\item[$3.$] The arguments used to prove  Lemmas~\ref{lem:s=2-1} and~\ref{lem:s=2-2} are  summarized  as follows:
\begin{itemize}
\item[(a)] For any $v\in V$, if  $B$ is a branch of $T$ at $v$,  then $M_v(B)$ is a $2 \times 2$ lower triangular block matrix, {\it i.e.,}
$$M_v(B)=\left[
\begin{array}{c|c}
M_v^{(1)}(B) &  \mathbf{0} \\
\midrule
\ast & M_v^{(2)}(B)
\end{array}
\right].$$
 Hence,  $\rho(M_v(B))=\max\{\rho(M_v^{(1)}(B)), \rho(M_v^{(2)}(B))\}$.

\item[(b)]Results~\ref{result:1} and~\ref{result:2} are true for both  $T^{(1)}$ and  $T^{(2)}$.

\item[(c)]  For  any $u,v,w \in V$, if $B_u(w)\subsetneq B_v(w)$, then $\rho(M_u^{(j)}(w))< \rho(M_v^{(j)}(w))$ \textup{for} $j=1,2.$
\end{itemize}

\end{enumerate}
\end{rem} 

Before proving the results for the general case, we prove a lemma analogous to  Proposition~\ref{prop:big-branch-scalar}.

\begin{lem}\label{lem:big-branch-sr-lt}
Let $T=(V,E)$ be  a tree such that   weights on the edges of $T$ are  lower  triangular matrices   with positive diagonal entries. For  any $u,v,w \in V$, if $B_u(w)\subsetneq B_v(w)$, then $\rho(M_u(w))< \rho(M_v(w)).$
\end{lem}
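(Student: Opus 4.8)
The plan is to reduce the matrix-weight statement to the scalar case by passing to the induced positively weighted trees $T^{(1)},\ldots,T^{(s)}$ and then exploiting that the spectral radius of a bottleneck matrix is simply the maximum of the spectral radii of its scalar counterparts. The only structural input I need is that the combinatorial branch containment $B_u(w)\subsetneq B_v(w)$ depends solely on the underlying tree, and therefore holds verbatim in every $T^{(j)}$, each of which is an honest tree with positive edge weights $\{w_{jj}(e)\}_{e\in E}$.

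First I would invoke Lemma~\ref{lem:sp-bl} to write
\begin{equation*}
\rho(M_u(w))=\max_{1\leq j\leq s}\rho\bigl(M_u^{(j)}(w)\bigr)\quad\text{and}\quad \rho(M_v(w))=\max_{1\leq j\leq s}\rho\bigl(M_v^{(j)}(w)\bigr).
\end{equation*}
Next, since $B_u(w)\subsetneq B_v(w)$ in $T$ and the same proper containment holds in each $T^{(j)}$, I would apply Proposition~\ref{prop:big-branch-scalar} (the scalar comparison) separately to each positively weighted tree $T^{(j)}$ to obtain the strict inequalities $\rho\bigl(M_u^{(j)}(w)\bigr)<\rho\bigl(M_v^{(j)}(w)\bigr)$ for every $1\leq j\leq s$.

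Finally I would chain these through the maxima. Choose an index $j_0$ attaining $\rho(M_u(w))=\rho\bigl(M_u^{(j_0)}(w)\bigr)$; then
\begin{equation*}
\rho(M_u(w))=\rho\bigl(M_u^{(j_0)}(w)\bigr)<\rho\bigl(M_v^{(j_0)}(w)\bigr)\leq \max_{1\leq j\leq s}\rho\bigl(M_v^{(j)}(w)\bigr)=\rho(M_v(w)),
\end{equation*}
which is the desired strict inequality. The one point that requires a little care, rather than any genuine obstacle, is preserving strictness across the two maxima: taking the maximum of two families of scalars can in principle collapse a pointwise strict inequality, so I deliberately pass through the maximizing index $j_0$ for the smaller quantity $\rho(M_u(w))$ and then bound $\rho\bigl(M_v^{(j_0)}(w)\bigr)$ from below by the full maximum. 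This avoids comparing the two maxima index by index and makes the strictness transparent. No further estimates are needed, so the proof is short once the scalar comparison of Proposition~\ref{prop:big-branch-scalar} and the spectral decomposition of Lemma~\ref{lem:sp-bl} are in place.
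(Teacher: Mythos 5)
Your proposal is correct and follows essentially the same route as the paper: reduce to the induced positively weighted trees $T^{(j)}$, apply Proposition~\ref{prop:big-branch-scalar} to each, and conclude via the identity $\rho(M_x(w))=\max_j \rho(M_x^{(j)}(w))$. The only difference is that you spell out the passage through the maximizing index $j_0$ explicitly, a step the paper leaves implicit when it says the result ``follows from Equation~\eqref{eqn:sp-rd-lt}.''
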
 
\begin{proof}
Let the  weights on the edges of $T$ be $s\times s$  lower  triangular matrices    with positive diagonal entries. For each $1\leq j \leq s$,  let $ T^{(j)}$   be the tree with positive edge weights induced by $T=(V,E)$ with $s\times s$ lower triangular matrix edge weights. For each $1\leq j\leq s$, using Proposition~\ref{prop:big-branch-scalar} for the tree  $T^{(j)}$ if $B_u(w)\subsetneq B_v(w)$  then $\rho(M_u^{(j)}(w))< \rho(M_v^{(j)}(w))$.  Hence,  the  result follows from  Equation~\eqref{eqn:sp-rd-lt}.
\end{proof}
 
 \begin{theorem}\label{thm:lt-s*s}
Let $T=(V,E)$ be a tree such that the weights on the edges of $T$ are lower triangular matrices with positive diagonal entries.  Then, one of the following cases occurs: 
\begin{enumerate}
\item[$1.$] There is a unique vertex $v$ such that there are two or more   Perron branches at $v$ in $T$. Moreover, if  $x$ is a vertex other than $v$, then the unique Perron branch at $x$ in $T$ is the branch which contains the vertex $v$.

\item[$2.$] There is  a unique  pair of vertices $u$ and $v$ with $u \sim v$  such that   the Perron branch  at $u$ in $T$ is  the branch containing $v$, while the Perron branch  at $v$  in $T$  is  the branch containing $u$.  Moreover, the unique Perron branch at any vertex $x$ in $T$ is the branch which contains at least one of the vertices $u$ or $v$.
\end{enumerate}
\end{theorem}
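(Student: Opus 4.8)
The plan is to reuse the two-pronged strategy developed for positive definite weights in Theorems~\ref{thm:ch-edge1} and~\ref{thm:ch-vertex}, the decisive new ingredient being that Lemma~\ref{lem:big-branch-sr-lt} now provides a \emph{strict} inequality $\rho(M_u(w))<\rho(M_v(w))$ whenever $B_u(w)\subsetneq B_v(w)$, which is precisely what is unavailable in the positive definite setting. Throughout, Equation~\eqref{eqn:sp-rd-lt} legitimizes writing $\rho(M_x)$ for the largest Perron value among the branches at a vertex $x$. I would open with the dichotomy: either every vertex of $T$ has a unique Perron branch, or some vertex has two or more Perron branches.

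In the first situation I would run the walk argument of Theorem~\ref{thm:ch-edge1}. Starting at an arbitrary vertex $u_1$ and repeatedly stepping to the neighbour singled out by the unique Perron branch produces a walk $u_1\sim u_2\sim\cdots$; since $T$ is finite and acyclic it must reverse at some stage, say $u_{i_0+1}=u_{i_0-1}$, and setting $u=u_{i_0-1}$, $v=u_{i_0}$ yields adjacent vertices whose unique Perron branches are $B_u(v)$ and $B_v(u)$. For uniqueness and the ``moreover'' clause I would argue as in Theorem~\ref{thm:ch-edge1}: for $x,y\in B_u(v)$ with $x\sim y$ and $y\notin B_x(u)$ one has $B_x(y)\subseteq B_v(y)$ and $B_v(u)\subseteq B_x(u)$, so Lemma~\ref{lem:big-branch-sr-lt}, together with the fact that $B_v(u)$ is the unique Perron branch at $v$ (whence $\rho(M_v(y))<\rho(M_v(u))$), forces $\rho(M_x(y))<\rho(M_x(u))$. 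Hence the unique Perron branch at every vertex contains $u$ or $v$, and this in turn pins down the pair $\{u,v\}$, giving case~2 of the theorem.

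In the second situation, let $v$ have two or more Perron branches. Repeating the proof of Lemma~\ref{lem:ch-ver-lem-pd} verbatim, with Lemma~\ref{lem:big-branch-sr-lt} in place of Lemma~\ref{lem:big-branch-sr}, shows that $B_x(v)$ is a Perron branch at every $x\neq v$. The key step is to exclude a neighbour $u_1\sim v$ that also carries two or more Perron branches: if it did, choose a Perron branch $B_v(w)$ at $v$ with $w\neq u_1$ and a Perron branch $B_{u_1}(w')$ at $u_1$ with $w'\neq v$; since $B_v(w)\subsetneq B_{u_1}(v)$ and $B_{u_1}(w')\subsetneq B_v(u_1)$, Lemma~\ref{lem:big-branch-sr-lt} yields $\rho(M_v)=\rho(M_v(w))<\rho(M_{u_1}(v))\le\rho(M_{u_1})$ and, symmetrically, $\rho(M_{u_1})=\rho(M_{u_1}(w'))<\rho(M_v(u_1))\le\rho(M_v)$, a contradiction. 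Thus every neighbour of $v$ has a unique Perron branch, which must be $B_{u_1}(v)$.

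It then remains to propagate uniqueness outward as in Case~1 of Theorem~\ref{thm:ch-vertex}: for $x\neq v$ in $B_v(u_1)$ and any $y\sim x$ with $y\notin B_x(v)$, the containments $B_{u_1}(v)\subseteq B_x(v)$ and $B_x(y)\subseteq B_{u_1}(y)$, combined with $\rho(M_{u_1}(y))<\rho(M_{u_1}(v))$, give $\rho(M_x(y))<\rho(M_x(v))$, so $B_x(v)$ is the unique Perron branch at $x$. Consequently $v$ is the only vertex with more than one Perron branch and every other vertex's unique Perron branch contains $v$, which is case~1. The step that is genuinely hard in the positive definite theory---excluding two adjacent vertices each with several Perron branches, which there demanded the delicate eigenvector computation of Case~2 of Theorem~\ref{thm:ch-vertex}---is here dispatched in a single line by the strictness in Lemma~\ref{lem:big-branch-sr-lt}; the only obstacle I anticipate is the careful bookkeeping of branch containments $B_\cdot(\cdot)\subsetneq B_\cdot(\cdot)$ needed to invoke that lemma with the correct orientation at each comparison.
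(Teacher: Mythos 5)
Your proposal is correct, but it takes a genuinely different route from the paper. The paper proves this theorem by induction on the order $s$ of the matrix weights: the base case $s=2$ is Lemmas~\ref{lem:s=2-1} and~\ref{lem:s=2-2}, which run an exhaustive case analysis on the characteristic sets $\mathcal{C}_{T^{(1)}},\mathcal{C}_{T^{(2)}}$ of the induced scalar-weighted trees and repeatedly invoke Propositions~\ref{prop:ch-edge}--\ref{prop:perron-branch}; the inductive step splits $M_v(B_i)$ into the block $\widetilde{M_v^*(B_i)}$ coming from the leading $(s-1)\times(s-1)$ principal submatrices of the weights and the block $M_v^{(s)}(B_i)$, so that $\rho(M_v(B_i))=\max\{\rho(M_v^*(B_i)),\rho(M_v^{(s)}(B_i))\}$ reproduces the two-block structure of the $s=2$ case. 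You instead transplant the direct arguments of Theorems~\ref{thm:ch-edge1} and~\ref{thm:ch-vertex} and observe that the strict monotonicity in Lemma~\ref{lem:big-branch-sr-lt} (which holds here because $\rho(M_\cdot(\cdot))$ is a finite maximum of scalar-tree Perron values, each strictly monotone by Proposition~\ref{prop:big-branch-scalar}) collapses the hardest step: two adjacent vertices each with several Perron branches would give $\rho(M_v)=\rho(M_v(w))<\rho(M_{u_1}(w))=\rho(M_{u_1}(v))\leq\rho(M_{u_1})$ and symmetrically $\rho(M_{u_1})<\rho(M_v)$, an immediate contradiction, whereas the positive definite setting only affords the non-strict Lemma~\ref{lem:big-branch-sr} and hence needs the eigenvector computation of Case~2 of Theorem~\ref{thm:ch-vertex}. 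I checked the branch containments you invoke ($B_v(w)\subsetneq B_{u_1}(w)=B_{u_1}(v)$, $B_{u_1}(w')\subsetneq B_v(w')=B_v(u_1)$, and the propagation step) and they are all correctly oriented for Lemma~\ref{lem:big-branch-sr-lt}. Your argument is shorter and arguably cleaner; what the paper's route buys in exchange is the localization statement proved as a corollary (the characteristic-like vertices of $T$ lie in the minimal subtree spanned by the characteristic vertices of the $T^{(j)}$), which falls out of its case analysis on $\mathcal{C}_{T^{(1)}},\dots,\mathcal{C}_{T^{(s)}}$ but is not produced by your direct approach.
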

 \begin{proof}
 Let the edges of $T$ be assigned   with   lower triangular  matrix weights of order $s \times s$ with positive diagonal entries. We prove this result using induction on $s$. By Lemmas~\ref{lem:s=2-1} and~\ref{lem:s=2-2}, the result is true for $s=2$. Let us assume that  the result is true whenever matrix weights are of order $(s-1)\times (s-1)$.

Let $\{W(e)\}_{e\in E}$ denote the lower triangular  matrix weights  on $T$ of order $s \times s$ with positive diagonal entries and let $W^*(e)$ denote the principal submatrix  of $W(e)$ corresponding to the indices $1,2,\ldots, s-1$. Let $T^*$ denote the tree $T=(V,E)$ with the  matrix weights $\{W^*(e)\}_{e\in E}$ of order  $(s-1)\times (s-1)$. Then, by the induction hypothesis   Results~\ref{result:1} and~\ref{result:2} hold true for the tree $T^*$ . 
 
Now, we consider  matrix weights of order $s\times s$. For $v\in V$ with $\deg(v)=r$ and for  $1\leq i\leq r$,  let  $B_i$ be the branches  at $v$. Then,  by  Equation~\eqref{eqn:bottle-lt},   we have
$$M_v^{*}(B_i)\simeq \widetilde{M_v^{*}(B_i)}=\left[
\begin{array}{c |c| c| c}
    M_v^{(1)}(B_i) & \mathbf{0}& \dots & \mathbf{0} \\
    \midrule
    \ast & M_v^{(2)}(B_i) & \dots & \mathbf{0} \\
    \midrule    
    \vdots & \vdots & \ddots & \vdots \\
    \midrule
    \ast  & \ast & \dots & M_v^{(s-1)}(B_i)
\end{array}
\right],$$
and
\begin{equation*}
{\small
M_v(B_i)\simeq \widetilde{M_v(B_i)}=\left[
\begin{array}{c |c| c| c}
    M_v^{(1)}(B_i) & \mathbf{0}& \dots & \mathbf{0} \\
    \midrule
    \ast & M_v^{(2)}(B_i) & \dots & \mathbf{0} \\
    \midrule    
    \vdots & \vdots & \ddots & \vdots \\
    \midrule
    \ast  & \ast & \dots & M_v^{(s)}(B_i)
\end{array}
\right]= \left[
\begin{array}{c|c}
\widetilde{M_v^{*}(B_i)} &  \mathbf{0} \\
\midrule
\ast &  M_v^{(s)}(B_i)
\end{array}
\right]. }
\end{equation*}
This implies that  $\rho(M_v(B_i))= \max\{ \rho(M_v^{*}(B_i)), \rho( M_v^{(s)}(B_i))\}$ for all  $1\leq i\leq r$. Therefore, in view of Remark~\ref{rem:1-2-lt}, Lemma~\ref{lem:big-branch-sr-lt} and the induction hypothesis, the desired result follows by proceeding  in exactly the same manner as Lemmas~\ref{lem:s=2-1} and~\ref{lem:s=2-2}. \end{proof}
 \begin{cor}
 Let $T=(V,E)$ be  a tree such that the  weights on the edges of $T$ are    $s\times s$ lower  triangular matrices  with positive diagonal entries. For $1\leq j\leq s$, let $ T^{(j)}$ be the trees with positive edge weights induced by $T=(V,E)$ with  $s\times s$ lower triangular matrix edge weights.  Then, the characteristic-like vertex (or vertices) of $T$ lies in the minimal sub tree of $T$ containing the characteristic vertices of $T^{(j)}$ for $1\leq j\leq s$. 
 \end{cor}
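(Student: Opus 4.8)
The plan is to locate the characteristic-like vertices by a \emph{projection} argument: I will show that every vertex lying outside the minimal subtree has a unique Perron branch in $T$ pointing into that subtree, and that this property is incompatible with being a characteristic-like vertex. Write $S$ for the minimal subtree of $T$ containing $\bigcup_{j=1}^{s}\mathcal{C}_{T^{(j)}}$. The key step is the claim: \emph{if $x\notin S$ and $z$ is the neighbour of $x$ on the unique path from $x$ to $S$, then $B_x(z)$ is the unique Perron branch at $x$ in $T$.} To prove it, note that since $S$ is connected and $x\notin S$, all of $S$ — hence each $\mathcal{C}_{T^{(j)}}$ — lies in the single branch $B_x(z)$. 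Because $x\notin S\supseteq\mathcal{C}_{T^{(j)}}$, the vertex $x$ is not a characteristic vertex of any $T^{(j)}$, so Proposition~\ref{prop:perron-branch} gives that $B_x(z)$ is the unique Perron branch at $x$ in each $T^{(j)}$; thus $\rho(M_x^{(j)}(B))<\rho(M_x^{(j)}(z))$ for every other branch $B$ at $x$ and every $j$. Taking the index $j_0$ attaining $\rho(M_x(B))=\max_j\rho(M_x^{(j)}(B))$ via Equation~\eqref{eqn:perr}, strictness is preserved: $\rho(M_x(B))=\rho(M_x^{(j_0)}(B))<\rho(M_x^{(j_0)}(z))\le\rho(M_x(z))$, proving the claim.

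By Theorem~\ref{thm:lt-s*s} exactly one of the two cases of Definition~\ref{def:ch-vertices} occurs, and the characteristic-like edge case admits no vertex with two or more Perron branches. If $T$ has a characteristic-like vertex $v$, then $v$ has two or more Perron branches, which the claim forbids for a vertex outside $S$; hence $v\in S$. In the characteristic-like edge case $\mathcal{C}_T=\{u,v\}$, I would suppose for contradiction that $u\notin S$. The claim makes the unique Perron branch at $u$ point toward $S$, while Definition~\ref{def:ch-vertices} forces it to be $B_u(v)$; therefore $v$ is the neighbour of $u$ on the path to $S$, so $u$ lies on the side of $v$ \emph{away} from $S$. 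If $v\notin S$ as well, the claim makes the unique Perron branch at $v$ point toward $S$, contradicting that it must be $B_v(u)$; hence $v\in S$ and $v$ is the closest point of $S$ to $u$. A short path argument then shows $B_v(u)\cap S=\emptyset$ (any common vertex would force the path from it to $v$, and hence $u$, into $S$), so $B_v(u)$ contains no characteristic vertex of any $T^{(j)}$.

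The main obstacle is to derive a contradiction from $B_v(u)$ being, nonetheless, the unique Perron branch at $v$ in $T$. Writing $\rho(M_v)=\rho(M_v(u))=\max_j\rho(M_v^{(j)}(u))$ and taking $j^\ast$ attaining this maximum, the squeeze $\rho(M_v)=\rho(M_v^{(j^\ast)}(u))\le\rho(M_v^{(j^\ast)})\le\rho(M_v)$ from Equation~\eqref{eqn:sp-rd-lt} shows $B_v(u)$ is a Perron branch at $v$ in $T^{(j^\ast)}$. If $v\notin\mathcal{C}_{T^{(j^\ast)}}$, or $T^{(j^\ast)}$ is Type~II at $v$, then Proposition~\ref{prop:perron-branch} or Proposition~\ref{prop:ch-edge} forces the unique Perron branch at $v$ in $T^{(j^\ast)}$ to meet $S$, contradicting $B_v(u)\cap S=\emptyset$. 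The delicate remaining case is $T^{(j^\ast)}$ Type~I with characteristic vertex $v$: here there is a second Perron branch $B_v(w)$ at $v$ in $T^{(j^\ast)}$ with $w\neq u$, and $\rho(M_v^{(j^\ast)}(w))=\rho(M_v^{(j^\ast)})=\rho(M_v)$ forces $\rho(M_v(w))=\rho(M_v)$, so $B_v(w)$ is a second Perron branch at $v$ in $T$ — contradicting that the characteristic-like edge case admits no vertex with two or more Perron branches. Hence $u\in S$, and the symmetric argument gives $v\in S$; therefore $\mathcal{C}_T\subseteq S$, the minimal subtree containing the characteristic vertices of all the $T^{(j)}$.
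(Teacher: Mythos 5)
Your proof is correct, but it takes a genuinely different route from the paper's. The paper proves this corollary by induction on $s$: it forms the auxiliary tree $T^*$ whose weights are the $(s-1)\times(s-1)$ leading principal submatrices, invokes the induction hypothesis to locate the characteristic-like vertices of $T^*$, and then reruns the two-tree case analysis of Lemmas~\ref{lem:s=2-1} and~\ref{lem:s=2-2} on the pair $(T^*,T^{(s)})$, concluding that the characteristic-like vertices of $T$ lie on the path joining those of $T^*$ and of $T^{(s)}$. You instead treat all $s$ scalar trees simultaneously and argue directly: your projection claim --- that any vertex $x$ outside the minimal subtree $S$ has, as its unique Perron branch in $T$, the branch toward $S$, because Proposition~\ref{prop:perron-branch} applies in every $T^{(j)}$ and the strict inequalities survive the maximum in Equation~\eqref{eqn:perr} --- disposes of the characteristic-like-vertex case at once, and your contradiction argument for the characteristic-like-edge case (showing that $B_v(u)\cap S=\emptyset$ would make $B_v(u)$ a Perron branch of some $T^{(j^*)}$ at $v$ avoiding $\mathcal{C}_{T^{(j^*)}}$, which Propositions~\ref{prop:ch-edge}--\ref{prop:perron-branch} rule out, with the Type~I subcase instead producing a forbidden second Perron branch at $v$ in $T$) is complete and correctly uses the mutual exclusivity of the two cases in Theorem~\ref{thm:lt-s*s}. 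What your approach buys is a self-contained, non-inductive localization argument that avoids the paper's somewhat informal ``proceeding in exactly the same manner as Lemmas~\ref{lem:s=2-1} and~\ref{lem:s=2-2}'' step; what the paper's approach buys is the finer intermediate information recorded in Remark~\ref{rem:1-2-lt}(2), namely that for $s=2$ the characteristic-like vertices lie on the path joining the two scalar characteristic sets, which your argument does not directly reproduce.
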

 \begin{proof}
  We use induction on $s$ to prove the result. The result is true for $s=2$ (see Remark~\ref{rem:1-2-lt} $(2)$). Let us assume that  the result is true whenever matrix weights are of order $(s-1)\times (s-1)$. Let $T^*$ be the tree $T$ with weights of order $(s-1)\times (s-1)$ as defined in  proof of Theorem~\ref{thm:lt-s*s}. Then, by the induction hypothesis, characteristic-like vertex (or vertices) of $T^*$ lie in the minimal sub tree of $T$ containing all the characteristic vertices of $T^{(j)}$ for $1\leq j\leq (s-1).$ Thus, proceeding in a manner similar to  that in Lemmas~\ref{lem:s=2-1} and~\ref{lem:s=2-2} for the trees $T^*$ and $T^{(s)}$, we see that the characteristic-like vertex (or vertices) of $T$ lie in the path joining the  characteristic-like vertex (or vertices) of $T^*$ and the characteristic vertex (or vertices) of $T^{(s)}$. Hence, 
  the desired result follows.
\end{proof}
 
From Propositions~\ref{prop:ch-edge} and~\ref{prop:ch-vertex}, we have seen that the first non-zero eigenvalue of the Laplacian matrix  (algebraic connectivity) of a tree with positive weights can be expressed in terms of Perron values. In the next section, we attempt to find a similar relation for trees with matrix weights. However, here we obtain an inequality instead.

\section{Lower Bound on the First Non-zero Laplacian Eigenvalue}\label{sec:lower-bound}

In the literature, the algebraic connectivity plays an important role in understanding the geometry of a tree with positive edge weights (for example, see~\cite{Abreu,Grone2, Kirkland1, Lal, Patra1, Patra2,Wang, Zhang}). In particular,  the understanding of the characteristic vertex (vertices) via Perron value and Perron branch, and the representation of the algebraic connectivity in terms of Perron values yielded several interesting results related to the structure of a tree (for example, see~\cite{Abreu, Kirkland1, Lal, Patra1, Patra2}). Given Section~\ref{sec:char-perron}, we are interested in a similar representation for the first non-zero eigenvalue of  Laplacian matrices via Perron values for trees with matrix edge weights. However, we obtain a  lower bound involving a similar expression on  Perron values.

Let $T$ be a tree on $n$ vertices with either of the following classes of matrix weights on its edges:  $(1)$~positive definite matrix weights,  $(2)$ lower (or upper) triangular matrix weights with positive diagonal entries. From the previous sections, we know that the eigenvalues of the Laplacian matrix $L(T)$ are nonnegative. Moreover, if the matrix weights assigned to the edges of $T$ are of order $s\times s$, then by~\cite[Theorem $2.4$]{Atik}, we have rank$(L(T))=(n-1)s.$  Therefore, if the eigenvalues of $L(T)$ are ordered as in Equation~\eqref{eqn:ev-hermitian}, then $\lambda_{s+1}(L(T))$ is the first non-zero eigenvalue of $L(T)$.  For notational consistency, we denote the first non-zero eigenvalue $\lambda_{s+1}(L(T))$  as $\mu(T)$   (similar to the case of trees with positive edge weights). In this section, we provide a lower bound for $\mu(T)$ in terms of Perron value.  Before proceeding further, using arguments similar to the proof of \cite[Theorem 1]{Kirkland}, we extend the result for trees with the above classes of matrix edge weights.

\begin{lem}\label{lem:ch-edge-avg}
Let $T$ be a tree  with either of the  following  classes of matrix weights on its edges: 
$1.$~positive definite matrix weights,  $2.$ lower (or upper) triangular matrix weights with positive diagonal entries. 
If $T$ has a  characteristic-like edge $e$ between the vertices $u$ and $v$,   
 then $\exists$ $0< \nu < 1$ such that $$\rho(M_u(v) - \nu ( J\otimes [W(e)^{-1}])) = \rho(M_v(u) - (1-\nu) ( J\otimes [W(e)^{-1}])),$$ where $W(e)$ denotes the matrix weight on the edge $e$.
\end{lem}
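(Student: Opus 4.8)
The plan is to prove the identity by an intermediate value theorem argument applied to the single continuous scalar function
\[
h(\nu) = \rho\bigl(M_u(v) - \nu (J\otimes W(e)^{-1})\bigr) - \rho\bigl(M_v(u) - (1-\nu)(J\otimes W(e)^{-1})\bigr),\qquad \nu\in[0,1].
\]
Since the spectral radius of a matrix is a continuous function of its entries, $h$ is continuous on $[0,1]$, so it is enough to show that $h(0)>0$ and $h(1)<0$; the required $\nu$ is then any zero of $h$ in the open interval $(0,1)$. The whole argument hinges on an algebraic identity expressing the two ``shifted'' matrices appearing at the endpoints as bottleneck-type matrices with a transparent block structure.

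First I would establish that identity. Fix $x,y\in B_v(u)$. Every path from a vertex of $B_v(u)$ to $v$ passes through $u$ and uses $e$ as its final edge, so $\mathcal{P}(x,v)\cap\mathcal{P}(y,v) = \{e\}\cup\bigl(\mathcal{P}(x,u)\cap\mathcal{P}(y,u)\bigr)$. Summing $W(\cdot)^{-1}$ over this set blockwise gives
\[
M_v(u) - (J\otimes W(e)^{-1}) = \widehat{M},
\]
where $\widehat{M}$ is indexed by $B_v(u)$ with $(x,y)$ block $\sum_{e'\in\mathcal{P}(x,u)\cap\mathcal{P}(y,u)}W(e')^{-1}$. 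The vertex $u$ contributes a zero row and column block (its path to itself is empty), and the block vanishes whenever $x,y$ lie in distinct branches at $u$; hence $\widehat{M}$ is block diagonal with a zero block for $u$ and the bottleneck matrices $M_u(w)$ of the branches $B_u(w)$ at $u$ with $w\neq v$. Consequently $\rho(\widehat{M}) = \max_{w\sim u,\, w\neq v}\rho(M_u(w))$. Because $e$ is a characteristic-like edge, $B_u(v)$ is the \emph{unique} Perron branch at $u$ (Definition~\ref{def:ch-vertices}), so this maximum is strictly smaller than $\rho(M_u(v))$; that is, $\rho\bigl(M_v(u)-(J\otimes W(e)^{-1})\bigr)<\rho(M_u(v))$. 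Interchanging the roles of $u$ and $v$ yields the symmetric statement $\rho\bigl(M_u(v)-(J\otimes W(e)^{-1})\bigr)<\rho(M_v(u))$. These two strict inequalities are exactly $h(0)>0$ and $h(1)<0$, which finishes the positive definite case.

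For the lower (or upper) triangular case I would note that the endpoint identity above is purely combinatorial, hence unchanged, and the only new point is to justify that the spectral radii behave as scalars. Here the vec-permutation of Lemma~\ref{lem:perm-equv} (which depends only on the block sizes, not on the entries) simultaneously block-lower-triangularizes $M_u(v)$ and $J\otimes W(e)^{-1}$, so that $M_u(v)-\nu(J\otimes W(e)^{-1})$ is permutation equivalent to a block lower triangular matrix with diagonal blocks $M_u^{(j)}(v)-\tfrac{\nu}{w_{jj}(e)}J$; thus $\rho\bigl(M_u(v)-\nu(J\otimes W(e)^{-1})\bigr)=\max_{1\le j\le s}\rho\bigl(M_u^{(j)}(v)-\tfrac{\nu}{w_{jj}(e)}J\bigr)$, a maximum of genuine spectral radii of symmetric matrices, and likewise for the $v$-side. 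This reduces every spectral radius to the scalar setting of Equation~\eqref{eqn:sp-rd-lt}, after which the same endpoint sign computation applies verbatim.

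The main obstacle I anticipate is not the continuity step but the bookkeeping that guarantees strictness: I must verify that $\rho(\widehat{M})$ equals the largest Perron value among the branches at $u$ other than $B_u(v)$ — so that the zero block coming from $u$ does not accidentally dominate (it does not, since every bottleneck matrix has strictly positive spectral radius) — and that ``unique Perron branch'' delivers a \emph{strict} inequality rather than merely $\le$. In the triangular case there is the added subtlety that the shifted matrices are no longer symmetric, so I cannot invoke the min-max theorem directly and must instead pass through the layerwise decomposition above to know that each relevant spectral radius is attained by a real eigenvalue and varies continuously with $\nu$.
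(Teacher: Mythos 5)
Your proposal is correct and takes essentially the same route as the paper: the identical block-diagonal identity $M_v(u) - J\otimes [W(e)^{-1}] = \left[\begin{smallmatrix} \mathbf{0} & \mathbf{0} \\ \mathbf{0} & \widehat{M}_u \end{smallmatrix}\right]$ (and its mirror), the same strict endpoint inequalities extracted from uniqueness of the Perron branch, the same reduction to the diagonal layers $M_u^{(j)}(v)$ in the triangular case, and the same continuity-plus-intermediate-value conclusion. The only (harmless) difference is that you work with the single function $h=f-g$ and need only continuity, whereas the paper also records that $f$ is decreasing and $g$ is increasing — monotonicity it uses later (e.g.\ in the max--min identity of Theorem~\ref{thm:lower-bound-lt}) but does not need for this lemma.
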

\begin{proof}
Let $\widehat{M}_u$ denote  the principal submatrix of $M_u$ obtained by deleting the block $M_u(v)$ (the block corresponding to the unique Perron branch $B_u(v)$  at $u$ in $T$) from $M_u$. Similarly, let $\widehat{M}_v$ denote the principal submatrix of $M_v$ obtained by deleting the block $M_v(u)$ (the block corresponding to the unique Perron branch $B_v(u)$  at $v$ in $T$) from $M_v$, {\it i.e.,}  
$$M_u=\left[
\begin{array}{c|c}
M_u(v) &  \mathbf{0} \\
\midrule
\mathbf{0}  & \widehat{M}_u
\end{array}
\right] \mbox{ and } 
M_v=\left[
\begin{array}{c|c}
\widehat{M}_v &  \mathbf{0} \\
\midrule
\mathbf{0}  &  M_v(u)
\end{array}
\right].$$
Then
\begin{equation}\label{eqn:4.1}
\rho(M_u(v)) > \rho(\widehat{M}_u)\mbox{ and } \rho(M_v(u)) > \rho(\widehat{M}_v).
\end{equation}
Further,  
\begin{equation}\label{eqn:M-d-hat}
\widehat{\widehat{M\, }}_u =M_v(u) - J\otimes [W(e)^{-1}] \mbox{ and } \widehat{\widehat{M\, }}_v =M_u(v) - J\otimes [W(e)^{-1}],
\end{equation}
where
$$\widehat{\widehat{M\, }}_u=\left[
\begin{array}{c|c}
\mathbf{0}_{s\times s} &  \mathbf{0} \\
\midrule
\mathbf{0}  & \widehat{M}_u
\end{array}
\right] \mbox{ and } 
\widehat{\widehat{M\, }}_v=\left[
\begin{array}{c|c}
\widehat{M}_v &  \mathbf{0} \\
\midrule
\mathbf{0}  & \mathbf{0}_{s\times s}
\end{array}
\right],$$
when the matrix weights on  edges are of order $s\times s$. Thus $\rho(\widehat{\widehat{M\, }}_u)=\rho(\widehat{M}_u)$ and $\rho(\widehat{\widehat{M\, }}_v)=\rho(\widehat{M}_v)$. Using Equations~\eqref{eqn:4.1} and~\eqref{eqn:M-d-hat}, we have
\begin{equation}\label{eqn:4.2}
\rho(M_u(v)) > \rho(M_v(u) - J\otimes [W(e)^{-1}])\mbox{ and } \rho(M_v(u)) > \rho(M_u(v) - J\otimes [W(e)^{-1}]) .
\end{equation}
For $0\leq t \leq 1$, let
\begin{equation*}\label{eqn:f-g}
\begin{cases}
\vspace*{.1cm}
f(t)= \rho(M_u(v)- t( J\otimes [W(e)^{-1}])),\\
g(t)= \rho(M_v(u)- (1-t)( J\otimes [W(e)^{-1}])).
\end{cases}
\end{equation*}
Then,
\begin{itemize}
\item for positive definite matrix weights, $J\otimes [W(e)^{-1}]$ is a positive semidefinite matrix. Then using the min-max theorem we see that $f(t)$ is a continuous, decreasing function  and $g(t)$  is a continuous, increasing function.

\item for lower triangular matrix weights of order $s\times s$ with positive diagonal entries, let $W(e)=[W_{ij}(e)]$.   By Equation~\eqref{eqn:sp-rd-lt},  we have 
$$f(t)=\rho(M_u(v)- t( J\otimes [W(e)^{-1}]))= \max_{1\leq j\leq s}\rho(M_u^{(j)}(v)- t \left( 1/W_{jj}(e)\right)J).$$
Since $\rho(M_u^{(j)}(v)- t( \frac{1}{W_{jj}(e)})J)$ is a decreasing function with respect to $t$ for  all $0\leq t \leq 1$ and $1\leq j\leq s$, we see that $f(t)$ is a continuous, decreasing function. Similarly, it can be seen that $g(t)$ is a continuous,  increasing function.
\end{itemize}
Note that, in the above cases the continuity of functions $f(t)$ and $g(t)$ for $0\leq t \leq 1$, follows from~\cite[Corollary  VI.1.6]{Bhatia}.  Further, $f(t)$ decreases from $\rho(M_u(v))$ to $\rho(M_u(v)-  J\otimes [W(e)^{-1}])$ and $g(t)$ increases from  $\rho(M_v(u)-  J\otimes [W(e)^{-1}])$ to $\rho(M_v(u))$. By Equation~\eqref{eqn:4.2}, $f(t)$ and  $g(t)$ must intersect, and hence the result follows.
\end{proof}

In view of Result~\ref{result:1} and Lemma~\ref{lem:ch-edge-avg},  we now define a constant in terms of Perron values for trees with a suitable class of matrix edge weights. 

\begin{defn}
Let $T$ be a tree  with either of the  following  classes of matrix weights on its edges: 
$(1)$~positive definite matrix weights,  $(2)$ lower (or upper) triangular matrix weights with positive diagonal entries. 
We define a constant $\kappa(T)$ as follows:
\begin{enumerate}
\item[$(a)$] If $T$ has a characteristic-like vertex  $v$, then $\kappa(T)= \dfrac{1}{\rho(M_v)}$.

\item[$(b)$] If $T$ has a characteristic-like edge $e$  between the vertices $u$ and $v$, then 
$$\kappa(T)=\frac{1}{\rho(M_u(v) - \nu ( J\otimes [W(e)^{-1}])) }= \frac{1}{\rho(M_v(u) - (1-\nu) ( J\otimes [W(e)^{-1}]))},$$
where  $0< \nu< 1$ as  defined in Lemma~\ref{lem:ch-edge-avg} and $W(e)$ denotes the matrix weight on  edge $e$.  
\end{enumerate}
\end{defn}

To obtain a lower bound on $\mu(T)$ for any tree $T$ with positive definite matrix edge weights, we first prove the following lemmas.

\begin{lem}\label{lem:M-inv}
Let $T$ be a tree  with nonsingular  matrix weights  on its edges. If $e$ is an edge  between the vertices $u$ and $v$, then for  $0< \alpha < 1$, we have
$$\bigg[M_u(v) - \alpha ( J\otimes [W(e)^{-1}])\bigg]^{-1}
= M_u(v)^{-1}+ \mathbf{e}_v \mathbf{e}_v^T \otimes \left[\left(\frac{\alpha}{1-\alpha}\right)W(e)\right],$$
where  $\mathbf{e}_{v}$ is the column vector of  conformal order with $1$ at the $v^{th}$ entry and $0$ elsewhere, and $W(e)$ is the weight on the edge $e$. 
\end{lem}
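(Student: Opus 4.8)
The plan is to exploit two complementary descriptions of the bottleneck matrix $M_u(v)$, both of which follow from the earlier results. On one hand, Theorem~\ref{thm:non-sing-inv} presents $M_u(v)$ as a bottleneck matrix based at $u$; since $v$ is the unique neighbour of $u$ inside the branch $B_u(v)$, every path from a vertex of $B_u(v)$ to $u$ traverses the edge $e$, so the term $W(e)^{-1}$ appears in \emph{every} block. Writing $N$ for the within-branch bottleneck matrix based at $v$, i.e.\ the block matrix whose $(x,y)$ block is $\sum_{e'\in\mathcal{P}(x,v)\cap\mathcal{P}(y,v)}W(e')^{-1}$, this reads $M_u(v)=N+J\otimes W(e)^{-1}$, and $N$ has a vanishing block row and block column indexed by $v$ (the path from $v$ to itself is edgeless). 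On the other hand, the block decomposition used in the proof of Theorem~\ref{thm:det-nonsingular} gives the inverse directly, $M_u(v)^{-1}=\widehat{L}(B_u(v))=L(B_u(v))+\mathbf{e}_v\mathbf{e}_v^T\otimes W(e)$, where $L(B_u(v))$ is the Laplacian of the subtree $B_u(v)$.

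First I would rewrite the claimed right-hand side using the second description, obtaining
\[
M_u(v)^{-1}+\mathbf{e}_v\mathbf{e}_v^T\otimes\Big(\tfrac{\alpha}{1-\alpha}W(e)\Big)=L(B_u(v))+\mathbf{e}_v\mathbf{e}_v^T\otimes\Big(\tfrac{1}{1-\alpha}W(e)\Big).
\]
Thus it suffices to invert a matrix of the form $L(B_u(v))+\mathbf{e}_v\mathbf{e}_v^T\otimes\beta$ with $\beta=\tfrac{1}{1-\alpha}W(e)$, which is nonsingular because $0<\alpha<1$ and $W(e)$ is nonsingular. The key step is the general identity
\[
\big[\,L(B)+\mathbf{e}_v\mathbf{e}_v^T\otimes\beta\,\big]^{-1}=N+J\otimes\beta^{-1},
\]
valid for every nonsingular $\beta$. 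I would prove it by applying Theorem~\ref{thm:non-sing-inv} to the auxiliary tree obtained from $B=B_u(v)$ by attaching one new pendant vertex to $v$ through an edge of weight $\beta$: deleting that pendant vertex returns exactly $L(B)+\mathbf{e}_v\mathbf{e}_v^T\otimes\beta$, whose inverse is the bottleneck matrix based at the pendant vertex, and the single connecting edge (common to all paths) contributes $\beta^{-1}$ to every block. Substituting $\beta^{-1}=(1-\alpha)W(e)^{-1}$ yields $N+J\otimes(1-\alpha)W(e)^{-1}=M_u(v)-\alpha\,(J\otimes W(e)^{-1})$, which is precisely the matrix on the left-hand side of the claim; taking inverses finishes the proof.

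Alternatively, one can verify the identity directly by multiplying $\big(M_u(v)-\alpha(J\otimes W(e)^{-1})\big)$ by the proposed inverse and simplifying. Here the useful facts are $N(\mathbf{e}_v\otimes I_s)=\mathbf{0}$ and that the block-column sums of $L(B_u(v))$ vanish, which together force $(J\otimes W(e)^{-1})L(B_u(v))=\mathbf{0}$ and collapse all three cross terms onto the single block pattern $\mathds{1}\,\mathbf{e}_v^T\otimes I_s$; the scalar coefficients then cancel because $\tfrac{\alpha}{1-\alpha}-\alpha-\tfrac{\alpha^2}{1-\alpha}=0$. I expect the only real obstacle to be bookkeeping with the Kronecker and block structure---pinning down which blocks survive each product---rather than any genuine difficulty, since the entire mathematical content sits in the two representations of $M_u(v)$ recalled at the outset.
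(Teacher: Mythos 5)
Your primary argument is correct, and it takes a genuinely different route from the paper's. The paper proves Lemma~\ref{lem:M-inv} by direct verification: it sets $X=M_u(v)^{-1}-\mathbf{e}_v\mathbf{e}_v^T\otimes W(e)$ (which is the Laplacian $L(B_u(v))$ of the subtree, hence has zero block row and column sums), deduces $X(J\otimes [W(e)^{-1}])=\mathbf{0}$, $XM_u(v)=I-\mathbf{e}_v\mathds{1}^T\otimes I$, and $(\mathbf{e}_v\mathbf{e}_v^T\otimes W(e))M_u(v)=(\mathbf{e}_v\mathbf{e}_v^T\otimes W(e))(J\otimes [W(e)^{-1}])=\mathbf{e}_v\mathds{1}^T\otimes I$, then multiplies the claimed inverse against $M_u(v)-\alpha(J\otimes [W(e)^{-1}])$ and watches the coefficients of $\mathbf{e}_v\mathds{1}^T\otimes I$ cancel via $-1+\frac{1}{1-\alpha}-\frac{\alpha}{1-\alpha}=0$ --- this is exactly the ``alternative'' you sketch in your closing paragraph, with the same cancellation up to regrouping. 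Your main route instead recognizes both sides as objects that Theorem~\ref{thm:non-sing-inv} already describes: the right-hand side equals $L(B_u(v))+\mathbf{e}_v\mathbf{e}_v^T\otimes\beta$ with $\beta=\frac{1}{1-\alpha}W(e)$, which is precisely the principal submatrix (obtained by deleting the pendant vertex) of the Laplacian of the auxiliary tree formed by hanging a new pendant vertex off $v$ with edge weight $\beta$; Theorem~\ref{thm:non-sing-inv} applied to that tree --- legitimate, since $\beta$ is nonsingular for $0<\alpha<1$ --- identifies its inverse as $N+J\otimes\beta^{-1}=M_u(v)-\alpha(J\otimes [W(e)^{-1}])$. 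What your approach buys is an explanation of where the formula comes from rather than a mere verification, a clean reuse of the existing machinery, and a transparent reason why the identity persists for any real $\alpha\neq 0,1$ (as the paper notes in the remark that follows); what it costs is having to state and justify the pendant-vertex identity and the decomposition $M_u(v)=N+J\otimes [W(e)^{-1}]$ as separate steps, whereas the paper's computation, though less illuminating, is entirely self-contained.
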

\begin{proof}
Let $L(T)$ be the Laplacian matrix of $T$ and $\widehat{L}(B_u(v))$ be the principal submatrix of $L(T)$ corresponding to the vertices in the branch $B_u(v)$. By Theorem~\ref{thm:non-sing-inv}, we know that $\widehat{L}(B_u(v))= M_u(v)^{-1}$. Let 
$$X=  M_u(v)^{-1} + \mathbf{e}_v \mathbf{e}_v^T \otimes [-W(e)].$$ Then, the row and column block sums of $X$ are zero. Thus, from Remark~\ref{rem:rem1} $(2)$, we have 
\begin{equation}\label{eqn:x-prod}
X (J\otimes [W(e)^{-1}])= \mathbf{0}.
\end{equation}
Further note that, the column block of $M_u(v)$ (by Theorem~\ref{thm:non-sing-inv}) and $J\otimes [W(e)^{-1}]$ corresponding to the vertex $v$ is $\mathds{1} \otimes [W(e)^{-1}]$. Hence, 
\begin{equation}\label{eqn:pr-W}
\begin{cases}\vspace*{.1cm}
(\mathbf{e}_v \mathbf{e}_v^T \otimes W(e))M_u(v)= \mathbf{e}_v  \mathds{1}^T \otimes I,\\
(\mathbf{e}_v \mathbf{e}_v^T \otimes W(e))(J\otimes [W(e)^{-1}])=\mathbf{e}_v  \mathds{1}^T \otimes I.
\end{cases}
\end{equation}
This implies that 
\begin{equation}\label{eqn:xm}
XM_u(v)= I + \bigg(\mathbf{e}_v \mathbf{e}_v^T \otimes [-W(e)]\bigg)M_v(u)= I+ \mathbf{e}_v \mathds{1}^T \otimes [-I].
\end{equation}
Now,
\begin{align*}
&\left(M_u(v)^{-1}+ \mathbf{e}_v \mathbf{e}_v^T \otimes \left[\left(\frac{\alpha}{1-\alpha}\right)W(e)\right]\right)\bigg(M_u(v) - \alpha ( J\otimes [W(e)^{-1}])\bigg)\\
=&\left(X+ \mathbf{e}_v \mathbf{e}_v^T \otimes \left[\left(\frac{1}{1-\alpha}\right)W(e)\right]\right)\bigg(M_u(v) - \alpha ( J\otimes [W(e)^{-1}])\bigg)\\
=& XM_u(v)- \alpha \; X (J\otimes [W(e)^{-1}])+ \left(\mathbf{e}_v \mathbf{e}_v^T \otimes \left[\left(\frac{1}{1-\alpha}\right)W(e)\right]\right)M_u(v) \\
&\hspace*{6cm} 
-\alpha \left(\mathbf{e}_v \mathbf{e}_v^T \otimes \left[\left(\frac{1}{1-\alpha}\right)W(e)\right]\right)(J\otimes [W(e)^{-1}]).
\end{align*}
Using Equations~\eqref{eqn:x-prod}, \eqref{eqn:pr-W} and~\eqref{eqn:xm}, the above equation reduces to
\begin{align*}
&\left(M_u(v)^{-1}+ \mathbf{e}_v \mathbf{e}_v^T \otimes \left[\left(\frac{\alpha}{1-\alpha}\right)W(e)\right]\right)\bigg(M_u(v) - \alpha ( J\otimes [W(e)^{-1}])\bigg)\\
=& I+ \left(-1+\frac{1}{1-\alpha}-\frac{\alpha}{1-\alpha} \right) \mathbf{e}_v  \mathds{1}^T \otimes I =I.
\end{align*}
Hence, the desired result follows.
\end{proof}

\begin{lem}\label{lem:L-M}
Let $T$ be a tree on $n$ vertices with nonsingular  matrix weights  on its edges and   $L(T)$ be the Laplacian matrix of $T$. If $e$ is an edge  between the vertices $u$ and $v$, then for $0< \alpha < 1$, we have
\begin{equation}\label{eqn:L-M}
 L(T) + \mathbf{E} \otimes W(e)={\small\left[
\begin{array}{c|c}
\bigg[M_u(v) - \alpha ( J\otimes [W(e)^{-1}])\bigg]^{-1} &  \mathbf{0} \\
\midrule
\mathbf{0}  & \bigg[M_v(u) - (1-\alpha) ( J\otimes [W(e)^{-1}])\bigg]^{-1}
\end{array}
\right]},
\end{equation}
where $W(e)$ is the weight on edge $e$ and $\mathbf{E}=[\mathbf{E}_{xy}]_{x,y \in V}$ is an $n \times n$ matrix with
$$\mathbf{E}_{xy}=
\begin{cases}
\vspace*{.15cm}
\dfrac{\alpha}{1-\alpha} & \mbox{ if } x=y=v,\\\vspace*{.05cm}
 \dfrac{1-\alpha}{\alpha}  & \mbox{ if } x=y=u,\\\vspace*{.05cm}
1 & \mbox{ if } x=u,y=v \mbox{ or } x=v,y=u,\\
0 & \mbox{ otherwise. }
\end{cases}
$$
\end{lem}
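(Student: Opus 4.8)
The plan is to verify the identity directly by exhibiting the block structure that the edge $e$ induces on $L(T)$. First I would observe that deleting the edge $e=uv$ disconnects $T$ into exactly two subtrees, whose vertex sets are precisely $B_u(v)$ (the branch at $u$ containing $v$) and $B_v(u)$ (the branch at $v$ containing $u$). Since $e$ is the unique edge joining these two parts, this yields the partition $V = B_u(v)\sqcup B_v(u)$. After reordering the vertices so that those of $B_u(v)$ precede those of $B_v(u)$, the Laplacian takes the $2\times 2$ block form
$$L(T) = \left[\begin{array}{c|c} \widehat{L}(B_u(v)) & C \\ \hline C^T & \widehat{L}(B_v(u)) \end{array}\right],$$
where $\widehat{L}(B_u(v))$ and $\widehat{L}(B_v(u))$ are the principal submatrices of $L(T)$ supported on the two parts.

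Second, I would identify these diagonal blocks. By the definition of the bottleneck matrix together with Theorem~\ref{thm:non-sing-inv}, the principal submatrices satisfy $\widehat{L}(B_u(v)) = M_u(v)^{-1}$ and $\widehat{L}(B_v(u)) = M_v(u)^{-1}$. For the coupling $C$, note that the only off-diagonal blocks of $L(T)$ joining the two parts are those indexed by the endpoints of $e$; since $l_{vu}=l_{uv}=-W(e)$, the block of $C$ at the $(v,u)$ position is $-W(e)$ and all remaining off-diagonal blocks vanish.

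Third, I would add $\mathbf{E}\otimes W(e)$ and check the effect block by block. The entries $\mathbf{E}_{uv}=\mathbf{E}_{vu}=1$ contribute $W(e)$ in the $(v,u)$ and $(u,v)$ block positions, exactly cancelling the couplings $-W(e)$, so the resulting matrix is block diagonal. The entries $\mathbf{E}_{vv}=\frac{\alpha}{1-\alpha}$ and $\mathbf{E}_{uu}=\frac{1-\alpha}{\alpha}$ add $\frac{\alpha}{1-\alpha}W(e)$ to the diagonal block of $v$ inside $B_u(v)$ and $\frac{1-\alpha}{\alpha}W(e)$ to the diagonal block of $u$ inside $B_v(u)$. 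Hence
$$L(T) + \mathbf{E}\otimes W(e) = \left[\begin{array}{c|c} M_u(v)^{-1} + \mathbf{e}_v\mathbf{e}_v^T\otimes\frac{\alpha}{1-\alpha}W(e) & \mathbf{0} \\ \hline \mathbf{0} & M_v(u)^{-1} + \mathbf{e}_u\mathbf{e}_u^T\otimes\frac{1-\alpha}{\alpha}W(e) \end{array}\right].$$

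Finally, I would invoke Lemma~\ref{lem:M-inv} twice to conclude: applied to the branch $B_u(v)$ with parameter $\alpha$ it rewrites the top-left block as $[M_u(v)-\alpha(J\otimes [W(e)^{-1}])]^{-1}$, and applied to the branch $B_v(u)$ with parameter $1-\alpha$ (so that $\frac{1-\alpha}{1-(1-\alpha)}=\frac{1-\alpha}{\alpha}$) it rewrites the bottom-right block as $[M_v(u)-(1-\alpha)(J\otimes [W(e)^{-1}])]^{-1}$, giving exactly the claimed identity. There is no genuine analytic difficulty here; the only points requiring care are bookkeeping ones, namely using the partition $V=B_u(v)\sqcup B_v(u)$ determined by $e$, using $l_{uv}=l_{vu}=-W(e)$ so that both off-diagonal couplings are annihilated simultaneously, and correctly matching the scalar weights $\frac{\alpha}{1-\alpha}$ and $\frac{1-\alpha}{\alpha}$ to the two instances of Lemma~\ref{lem:M-inv}.
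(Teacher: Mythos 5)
Your proof is correct and follows essentially the same route as the paper: partition $V$ into $B_u(v)\sqcup B_v(u)$, identify the diagonal blocks of $L(T)$ as $M_u(v)^{-1}$ and $M_v(u)^{-1}$ via Theorem~\ref{thm:non-sing-inv} with the single $-W(e)$ coupling between $v$ and $u$, and then apply Lemma~\ref{lem:M-inv} to each block after adding $\mathbf{E}\otimes W(e)$. The bookkeeping (cancellation of the off-diagonal couplings and the matching of $\frac{\alpha}{1-\alpha}$, $\frac{1-\alpha}{\alpha}$ to the two instances of Lemma~\ref{lem:M-inv}) is exactly as in the paper's argument.
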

\begin{proof}
Let  $B_u(v)$ be the branch consisting of $k$ vertices and $B_v(u)$ be the branch  consisting of $(n-k)$ vertices. By suitable rearrangement of the vertex ordering and  from Theorem~\ref{thm:non-sing-inv}, the Laplacian matrix $L(T)$ of $T$ can be written as 
\begin{equation}\label{eqn:L-part-uv}
L(T)= \left[
\begin{array}{c|c}
M_u(v)^{-1} &  E_{k1} \otimes [-W(e)] \\
\midrule
 E_{1k} \otimes [-W(e)]   & M_v(u)^{-1}
\end{array}
\right],
\end{equation}
where $E_{k1}$ is the $k \times (n-k)$ matrix with $1$ at the $(k,1)^{th}$ position and $0$ elsewhere, and $E_{1k}$ is its transpose. Observe that,  the partitioning here is such that the last row of $M_u(v)^{-1}$ corresponds to the vertex $v$, whereas the first row of $ M_v(u)^{-1}$ corresponds to the vertex $u$.

For $0<\alpha<1$, using Lemma~\ref{lem:M-inv}, we have 
\begin{equation*}
\begin{cases}\vspace*{.2cm}
\ds M_u(v)^{-1}=\bigg[M_u(v) - \alpha ( J\otimes [W(e)^{-1}])\bigg]^{-1}- \  \mathbf{e}_v \mathbf{e}_v^T \otimes \left[\left(\frac{\alpha}{1-\alpha}\right)W(e)\right],\\
\ds M_v(u)^{-1}=\bigg[M_v(u) - (1-\alpha) ( J\otimes [W(e)^{-1}])\bigg]^{-1}- \  \mathbf{e}_v \mathbf{e}_v^T \otimes \left[\left(\frac{1-\alpha}{\alpha}\right)W(e)\right].
\end{cases}
\end{equation*}
Substituting the above values in Equation~\eqref{eqn:L-part-uv}, the desired result follows.
\end{proof}
\begin{rem}
\begin{enumerate}
\item From the proofs of  Lemmas~\ref{lem:M-inv} and \ref{lem:L-M}, it is easy to see that the result also applies for any real $\alpha$, where $\alpha \neq 1$ and $\alpha\neq 0.$
\item In Lemma~\ref{lem:L-M}, the matrix $\mathbf{E}$ is a rank one matrix, and for $0<\alpha <1$, its  only non-zero eigenvalue  is positive.
\end{enumerate}
\end{rem}

Now we prove the result which gives a lower bound on $\mu(T)$   whenever the edges of the tree $T$ are assigned with positive definite matrix weights.

\begin{theorem}\label{thm:lower-bound-pd}
Let $T$ be  a tree  with  positive definite matrix  weights  on its edges. Let $L(T)$ be the Laplacian matrix of $T$ and  $\mu(T)$ be the  first non-zero eigenvalue of  $L(T)$. Then $\kappa(T) \leq \mu(T)$.
\end{theorem}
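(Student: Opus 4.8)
The plan is to treat separately the two cases in the definition of $\kappa(T)$, handling the characteristic-like vertex case with the Inclusion Principle (Theorem~\ref{thm:inclu}) and the characteristic-like edge case with Lemma~\ref{lem:L-M} together with Weyl's inequality (Theorem~\ref{thm:weyl}). Throughout I would use that $L(T)$ has rank $(n-1)s$, so its eigenvalues satisfy $\lambda_1(L(T))=\cdots=\lambda_s(L(T))=0$ and $\mu(T)=\lambda_{s+1}(L(T))$, and that for positive definite weights every matrix in sight is real symmetric, so the spectral radius coincides with the largest eigenvalue.

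Suppose first that $T$ has a characteristic-like vertex $v$, so $\kappa(T)=1/\rho(M_v)$. Here $L_v=M_v^{-1}$ is a principal submatrix of $L(T)$ of order $(n-1)s$, and it is positive definite by Theorems~\ref{thm:inclu} and~\ref{thm:det-nonsingular}. I would apply the Inclusion Principle with $A=L(T)$ (order $ns$), $A_r=L_v$ (order $r=(n-1)s$) and $k=1$, obtaining $\lambda_1(L_v)\leq \lambda_{1+s}(L(T))=\mu(T)$. Since $L_v$ is symmetric positive definite, $\lambda_1(L_v)=1/\lambda_{\max}(M_v)=1/\rho(M_v)=\kappa(T)$, so $\kappa(T)\leq\mu(T)$ follows immediately.

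Now suppose $T$ has a characteristic-like edge $e$ between $u$ and $v$, and let $\nu\in(0,1)$ be as in Lemma~\ref{lem:ch-edge-avg}, so that $\rho(A)=\rho(B)=1/\kappa(T)$, where $A=M_u(v)-\nu(J\otimes[W(e)^{-1}])$ and $B=M_v(u)-(1-\nu)(J\otimes[W(e)^{-1}])$. Applying Lemma~\ref{lem:L-M} with $\alpha=\nu$ exhibits $\mathcal{M}:=L(T)+\mathbf{E}\otimes W(e)$ as the block diagonal matrix with diagonal blocks $A^{-1}$ and $B^{-1}$. Since $L(T)$ is positive semidefinite and $\mathbf{E}\otimes W(e)$ is positive semidefinite ($\mathbf{E}$ is rank one with positive nonzero eigenvalue and $W(e)$ is positive definite), $\mathcal{M}$ is positive semidefinite; hence each diagonal block $A^{-1},B^{-1}$ is positive definite, giving $\lambda_{\min}(A^{-1})=1/\rho(A)=\kappa(T)$ and likewise for $B^{-1}$, so $\lambda_1(\mathcal{M})=\kappa(T)$. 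I would then invoke Weyl's inequality for the splitting $\mathcal{M}=L(T)+(\mathbf{E}\otimes W(e))$, using that $\mathbf{E}\otimes W(e)$ has rank $s$ so that its smallest $(n-1)s$ eigenvalues vanish. With $j=s+1$ and $k=(n-1)s$, so that $j+k=ns+1$, Theorem~\ref{thm:weyl} yields $\lambda_1(\mathcal{M})\leq \lambda_{s+1}(L(T))+\lambda_{(n-1)s}(\mathbf{E}\otimes W(e))=\mu(T)+0$, that is $\kappa(T)\leq\mu(T)$.

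The routine part is the positive-semidefinite bookkeeping; the step needing the most care is the Weyl application in the edge case, namely matching the index $j+k-ns=1$ to $\lambda_1(\mathcal{M})$, choosing $j=s+1$ so that the right-hand side produces exactly $\mu(T)$, and using that $\mathbf{E}\otimes W(e)$ has rank $s$ to guarantee that the accompanying eigenvalue $\lambda_{(n-1)s}(\mathbf{E}\otimes W(e))$ is precisely zero. A direct comparison of $L(T)$ and the block matrix $\mathcal{M}$ through the Inclusion Principle fails because of the off-diagonal coupling between the two branches, which is why the additive Weyl estimate, exploiting the low rank of the perturbation $\mathbf{E}\otimes W(e)$, is the natural tool.
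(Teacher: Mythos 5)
Your proposal is correct and follows essentially the same route as the paper: the Inclusion Principle applied to $L_v=M_v^{-1}$ in the characteristic-like vertex case, and Lemma~\ref{lem:L-M} with $\alpha=\nu$ combined with Weyl's inequality (indices $j=s+1$, $k=(n-1)s$) and the rank-$s$ positive semidefiniteness of $\mathbf{E}\otimes W(e)$ in the characteristic-like edge case. The only difference is that you spell out the identification $\lambda_1(\mathcal{M})=\kappa(T)$ via the block-diagonal structure slightly more explicitly than the paper does.
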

\begin{proof}
Let $T$ be a tree on $n$ vertices and  the  weights on the edges of $T$ be   $s\times s$  positive definite matrices. Then $L(T)$ is a symmetric matrix of order $ns\times ns$. Let the eigenvalues of $L(T)$ be ordered as in Equation~\eqref{eqn:ev-hermitian}. Thus $\mu(T)= \lambda_{s+1}(L(T)).$

 If $T$ has a characteristic-like vertex, say $v$, then $\kappa(T)=1/\rho(M_v) $. Let $L_{v}$ be the principal submatrix of $L(T)$ obtained by deleting the row  block and column block corresponding to  the  vertex $v$. Let the eigenvalues of $L_{v}$ be ordered as in Equation~\eqref{eqn:ev-hermitian}. Since $L_v= M_v^{-1}$, we see that $\lambda_1(L_{v})=\kappa(T).$ Using Theorem~\ref{thm:inclu}, for the principal submatrix $L_v$ of order $(n-1)s\times (n-1)s$, we have
 $$0=\lambda_1(L(T))\leq  \lambda_1(L_{v}) \leq \lambda_{1+ns-(n-1)s}(L(T))=\lambda_{s+1}(L(T)). $$
Hence, $\kappa(T) \leq \mu(T)$.
 
  If $T$ has a characteristic-like edge $e$  between the vertices $u$ and $v$, then  $\exists$~$0< \nu< 1$ such that
  \begin{equation}\label{eqn:k}
  \kappa(T)=\frac{1}{\rho(M_u(v) - \nu ( J\otimes [W(e)^{-1}])) }= \frac{1}{\rho(M_v(u) - (1-\nu) ( J\otimes [W(e)^{-1}]))}.
  \end{equation}
By Lemma~\ref{lem:L-M}, Equation~\eqref{eqn:L-M} holds true for $\alpha=\nu.$  Since the edges of $T$ are assigned with positive definite matrices, both $L(T)$ and $\mathbf{E} \otimes W(e)$ are real symmetric matrices. Using Theorem~\ref{thm:weyl}, we have 
\begin{equation}\label{eqqn:ineq-L+E}
\lambda_1(L(T) + \mathbf{E} \otimes W(e))\leq \lambda_{s+1}(L(T))+\lambda_{(n-1)s}(\mathbf{E}\otimes W(e)).
\end{equation}
For $\alpha=\nu$, from Equations~\eqref{eqn:L-M} and~\eqref{eqn:k}, we have $\lambda_1(L(T) + \mathbf{E} \otimes W(e))= \kappa(T).$ Further, note that $\mathbf{E}$ is a rank one matrix. Since $0< \nu< 1$, from  Remark~\ref{rem:rem1} we see that $\mathbf{E}\otimes W(e)$ is a positive semidefinite matrix with rank$(\mathbf{E} \otimes W(e))=$ rank$(W(e))=s.$ This implies that $\lambda_i(\mathbf{E} \otimes W(e))=0$ for all $1\leq i \leq (n-1)s.$ Hence, Equation~\eqref{eqqn:ineq-L+E} reduces to  $\kappa(T) \leq \mu(T)$ and this completes the proof. 
\end{proof}

We now prove the result that gives a lower bound on $\mu(T)$  whenever the edges of the tree $T$ are assigned lower triangular matrix weights with positive diagonal entries.

\begin{theorem}\label{thm:lower-bound-lt}
Let $T=(V,E)$ be  a tree such that the  weights on the edges of $T$ are    lower  triangular matrices   with positive diagonal entries. Let $L(T)$ be the Laplacian matrix of $T$ and  $\mu(T)$ be the  first non-zero eigenvalue of  $L(T)$. Then $\kappa(T) \leq \mu(T)$.
\end{theorem}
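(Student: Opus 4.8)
The plan is to reduce the entire statement to the induced positive-weight trees $T^{(1)},\dots,T^{(s)}$, on which the symmetric-matrix machinery of Theorems~\ref{thm:inclu} and~\ref{thm:weyl} and the scalar case of Lemma~\ref{lem:L-M} are available. The structural obstacle is that $L(T)$ is only permutation equivalent to a block lower triangular matrix and is \emph{not} symmetric, so one cannot feed $L(T)$ directly into the Inclusion Principle or Weyl's inequality; every eigenvalue comparison must instead be routed through the $T^{(j)}$. The first ingredient is the spectral consequence of Equation~\eqref{eqn:lap-ltw}: the spectrum of $L(T)$ is the union, counting multiplicities, of the spectra $\sigma(L(T^{(j)}))$. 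Since each $T^{(j)}$ is a connected tree with positive edge weights, $L(T^{(j)})$ has $0$ as a simple eigenvalue and algebraic connectivity $\mu(T^{(j)})=\lambda_2(L(T^{(j)}))>0$. Hence $0$ has multiplicity exactly $s$ in $\sigma(L(T))$ (consistent with $\mathrm{rank}\,L(T)=(n-1)s$), and the first non-zero eigenvalue is $\mu(T)=\lambda_{s+1}(L(T))=\min_{1\le j\le s}\mu(T^{(j)})$, equivalently $\mu(T)^{-1}=\max_{j}\mu(T^{(j)})^{-1}$, which is the quantity I will compare against $\kappa(T)^{-1}$.

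If $T$ has a characteristic-like vertex $v$, then by Equation~\eqref{eqn:sp-rd-lt} we have $\kappa(T)^{-1}=\rho(M_v)=\max_{j}\rho(M_v^{(j)})$. For each $j$, $\rho(M_v^{(j)})=1/\lambda_1(L_v^{(j)})$, and applying the Inclusion Principle (Theorem~\ref{thm:inclu}) to the symmetric matrix $L(T^{(j)})$ and its principal submatrix $L_v^{(j)}$ (with $r=n-1$, $k=1$) gives $\lambda_1(L_v^{(j)})\le\lambda_2(L(T^{(j)}))=\mu(T^{(j)})$, hence $\rho(M_v^{(j)})\ge\mu(T^{(j)})^{-1}$. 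Taking the maximum over $j$ yields $\kappa(T)^{-1}\ge\max_{j}\mu(T^{(j)})^{-1}=\mu(T)^{-1}$, i.e.\ $\kappa(T)\le\mu(T)$.

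If $T$ has a characteristic-like edge $e$ between $u$ and $v$, fix the $\nu\in(0,1)$ from Lemma~\ref{lem:ch-edge-avg} and set $f_j=\rho(M_u^{(j)}(v)-\nu(1/W_{jj}(e))J)$ and $g_j=\rho(M_v^{(j)}(u)-(1-\nu)(1/W_{jj}(e))J)$. As in the proof of Lemma~\ref{lem:ch-edge-avg}, the permutation-equivalence reduction gives $\kappa(T)^{-1}=\max_j f_j=\max_j g_j$. For each fixed $j$ I would apply the scalar ($s=1$) case of Lemma~\ref{lem:L-M} with $\alpha=\nu$ and weight $W_{jj}(e)$, writing $L(T^{(j)})+\mathbf{E}\,W_{jj}(e)$ as a block diagonal matrix whose two diagonal blocks are the inverses of $M_u^{(j)}(v)-\nu(1/W_{jj}(e))J$ and $M_v^{(j)}(u)-(1-\nu)(1/W_{jj}(e))J$; its smallest eigenvalue is therefore $1/\max(f_j,g_j)$. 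Since $\mathbf{E}\,W_{jj}(e)$ is positive semidefinite of rank one, Weyl's inequality (Theorem~\ref{thm:weyl}, with $j'=2$, $k'=n-1$, so $j'+k'=n+1$) gives $1/\max(f_j,g_j)\le\lambda_2(L(T^{(j)}))=\mu(T^{(j)})$, that is $\max(f_j,g_j)\ge\mu(T^{(j)})^{-1}$.

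It remains to combine these with $\mu(T)=\min_j\mu(T^{(j)})$. Let $j_0$ achieve $\mu(T^{(j_0)})=\min_j\mu(T^{(j)})=\mu(T)$. Then either $f_{j_0}\ge\mu(T)^{-1}$, in which case $\max_j f_j\ge\mu(T)^{-1}$, or $g_{j_0}\ge\mu(T)^{-1}$, in which case $\max_j g_j\ge\mu(T)^{-1}$; since $\max_j f_j=\max_j g_j=\kappa(T)^{-1}$, in both cases $\kappa(T)^{-1}\ge\mu(T)^{-1}$, giving $\kappa(T)\le\mu(T)$. The delicate point is exactly this last step: the balancing parameter $\nu$ is chosen to equalize $f$ and $g$ for the matrix tree $T$ and need not make the critical scalar tree $T^{(j_0)}$ balanced, so I cannot assume $f_{j_0}$ rather than $g_{j_0}$ carries the per-$j$ bound. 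The global identity $\max_j f_j=\max_j g_j$, coming from Lemma~\ref{lem:ch-edge-avg}, is what absorbs this ambiguity and closes the argument.
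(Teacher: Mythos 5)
Your proof is correct, but it takes a genuinely different route from the paper's. The paper also starts from $\mu(T)=\min_j\mu(T^{(j)})$, but then argues combinatorially: assuming the minimum is attained at $T^{(1)}$, it locates the characteristic vertex or edge of $T^{(1)}$ relative to the characteristic-like vertex or edge of $T$ and runs chains of Perron-value inequalities of the form $\rho(M_v)\geq\rho(M_v(y))\geq\rho(M_x(y))\geq\rho(M_x^{(1)}(y))=1/\mu(T^{(1)})$, using Lemma~\ref{lem:big-branch-sr-lt} and Propositions~\ref{prop:ch-edge}--\ref{prop:perron-branch}; this forces a four-way case analysis (vertex vs.\ edge for $T$ crossed with vertex vs.\ edge for $T^{(1)}$), plus a separate min-max argument on $h(t)=\min\{f(t),g(t)\}$ for the sub-subcase where the two characteristic edges coincide. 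You instead run the argument of Theorem~\ref{thm:lower-bound-pd} componentwise on each symmetric $L(T^{(j)})$ --- the Inclusion Principle for the vertex case, the scalar instance of Lemma~\ref{lem:L-M} plus Weyl's inequality for the edge case --- and close with the elementary bookkeeping $\kappa(T)^{-1}=\max_j f_j=\max_j g_j$ and $\max(f_{j_0},g_{j_0})\geq\mu(T^{(j_0)})^{-1}=\mu(T)^{-1}$. This buys uniformity: no case analysis on where the characteristic sets of the $T^{(j)}$ lie, and in the vertex case you never even use that $v$ is the characteristic-like vertex (the bound $1/\rho(M_v^{(j)})=\lambda_1(L_v^{(j)})\leq\mu(T^{(j)})$ holds for every vertex). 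The paper's version, in exchange, stays entirely inside the Perron-branch machinery of Section~\ref{sec:char-perron} and exhibits more of the underlying geometry. One small point worth writing out in your edge case: each block $M_u^{(j)}(v)-\nu(1/W_{jj}(e))J$ is positive definite (immediate from Lemma~\ref{lem:M-inv}, since its inverse is $M_u^{(j)}(v)^{-1}$ plus a positive semidefinite rank-one term), which is what justifies reading off the smallest eigenvalue of the block-diagonal matrix as $1/\max(f_j,g_j)$. Your closing remark is exactly the right diagnosis: $\nu$ need not balance the critical scalar tree $T^{(j_0)}$, and the global identity $\max_j f_j=\max_j g_j$ is what absorbs that ambiguity.
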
 
\begin{proof}
Let the  weights on the edges of $T$ be $s\times s$  lower  triangular matrices   with positive diagonal entries. For $1\leq j\leq s$, let $ T^{(j)}$ be the trees with positive edge weights induced by $T=(V,E)$ with  $s\times s$ lower triangular matrix edge weights. For $1\leq j\leq s$, let  $L(T^{(j)})$ be the Laplacian matrix of  $T^{(j)}$. Then,  using Equation~\eqref{eqn:lap-ltw},  we have $$\sigma(L(T))= \bigcup_{j=1}^s \sigma(L(T^{(j)})) \mbox{ and }
\mu(T)= \min_{1\leq j\leq s} \mu(T^{(j)}),$$
where $\mu(T^{(j)})$ denotes  the algebraic connectivity of $T^{(j)}$. 

Without loss of generality, let us assume $\mu(T)=\mu(T^{(1)})$. We now consider the following cases to complete the proof.\\

\noindent$\underline{\textbf{Case 1:}}$ Let $T$ have a characteristic-like vertex, say $v$. Then, $\kappa(T)=\dfrac{1}{\rho(M_v)}$.\\

$\underline{\textbf{Subcase 1.1:}}$ Let $T^{(1)}$ have a characteristic vertex, say $x$. By Proposition~\ref{prop:ch-vertex}, there are two or more Perron branches   at $x$ in  $T^{(1)}$ and hence there exists a vertex $y$ adjacent to $x$ (and $y$ is not in the path  $\mathcal{P}(v,x)$ if $v\neq x$) such that $B_x(y)$ is a Perron branch of at $x$ in  $T^{(1)}$. Thus,
$$B_x(y)\subseteq B_v(y) \mbox{ and }\rho(M_x^{(1)})=\rho(M_x^{(1)}(y)).$$
Using Proposition~\ref{prop:ch-vertex}, Lemma~\ref{lem:big-branch-sr-lt} and Equation~\eqref{eqn:perr}, we have
$$\frac{1}{\kappa(T)}= \rho(M_v) \geq \rho(M_v(y))
                               \geq \rho(M_x(y))
                               \geq \rho(M_x^{(1)}(y))
                               =\rho(M_x^{(1)})
                               =\frac{1}{\mu(T^{(1)})}
                                =\frac{1}{\mu(T)}.$$
                                
$\underline{\textbf{Subcase 1.2:}}$ Let $T^{(1)}$ have a characteristic edge $\hat{e}$ between the vertices $x$ and $y$. Using Proposition~\ref{prop:ch-edge}, there exists  $0< \gamma < 1$ such that
\begin{equation}\label{eqn:ch-edge-ineq}
\frac{1}{\mu(T^{(1)})}=\rho(M_x^{(1)}(y) - \gamma (1/\theta) J) = \rho(M_y^{(1)}(x) - (1-\gamma)  (1/\theta) J),
\end{equation}
where $\theta$ is the positive  weight assigned to the edge $\hat{e}$ in $T^{(1)}$. Without loss of generality,  let $y$ not be in the path  $\mathcal{P}(v,x)$. Here $B_x(y)\subseteq B_v(y)$. Using Lemma~\ref{lem:big-branch-sr-lt}, Equations~\eqref{eqn:perr} and~\eqref{eqn:ch-edge-ineq}, we have
$$\frac{1}{\kappa(T)}= \rho(M_v) \geq \rho(M_v(y))
                               \geq \rho(M_x(y))
                               \geq \rho(M_x^{(1)}(y))
           >\rho(M_x^{(1)}(y) - \gamma (1/\theta) J)
                               =\frac{1}{\mu(T^{(1)})}
                                =\frac{1}{\mu(T)}.$$

 \noindent$\underline{\textbf{Case 2:}}$  Let $T$ have a  characteristic-like edge $e$  between the vertices $u$ and $v$. For $0\leq t \leq 1$, let
\begin{equation}\label{eqn:f-g-h}
\begin{cases}
\vspace*{.1cm}
f(t)= \rho(M_u(v)- t( J\otimes [W(e)^{-1}])),\\\vspace*{.1cm}
g(t)= \rho(M_v(u)- (1-t)( J\otimes [W(e)^{-1}])),\\
h(t)=\min\{f(t),g(t)\}.
\end{cases}
\end{equation}
From the proof of Lemma~\ref{lem:ch-edge-avg}, we know that $f(t)$ is a continuous, decreasing function and  $g(t)$ is a continuous,  increasing function. Hence, there exists  $0< \nu < 1$ such that $f(\nu)=g(\nu)$, {\it i.e.,} 
$$\rho(M_u(v) - \nu ( J\otimes [W(e)^{-1}])) = \rho(M_v(u) - (1-\nu) ( J\otimes [W(e)^{-1}])),$$
where $W(e)$ is the matrix weight on  the edge $e$. Therefore, 
\begin{equation}\label{eqn:h}
 h(\nu)= \max_{0\leq t\leq 1} h(t)=\max_{0\leq t\leq 1} \min\{f(t),g(t)\}=f(\nu)=g(\nu).
\end{equation} 
By definition,
  $$\kappa(T)=\frac{1}{\rho(M_u(v) - \nu ( J\otimes [W(e)^{-1}])) }= \frac{1}{\rho(M_v(u) - (1-\nu) ( J\otimes [W(e)^{-1}]))},$$
and hence by Equation~\eqref{eqn:h}, we have 
\begin{equation}\label{eqn:h-k}
 h(\nu)= \max_{0\leq t\leq 1} h(t)=\max_{0\leq t\leq 1} \min\{f(t),g(t)\}=\frac{1}{\kappa(T)}.
\end{equation}
Let $\widehat{M}_v$ denote the  principal submatrix of $M_v$ obtained by deleting the block $M_v(u)$ (the block corresponding to the unique Perron branch $B_v(u)$  at $v$ in $T$) from $M_v$. Then, from Equation~\eqref{eqn:M-d-hat} we see that  $\widehat{\widehat{M\, }}_v= M_u(v) -  J\otimes [W(e)^{-1}]$ and $\rho(\widehat{\widehat{M\, }}_v)=\rho(\widehat{M}_v)$. Hence,
\begin{equation}\label{eqn:ch-edge-1}
\frac{1}{\kappa(T)}= \rho(M_u(v) - \nu ( J\otimes [W(e)^{-1}])) > \rho(M_u(v) -  J\otimes [W(e)^{-1}]) =  \rho(\widehat{M}_v).   
\end{equation}   

$\underline{\textbf{Subcase 2.1:}}$ Let $T^{(1)}$ have a characteristic vertex, say $x$. Without loss of generality, let us assume $x\in B_u(v)$. By Proposition~\ref{prop:ch-vertex}, there are two or more Perron branches   at $x$ in  $T^{(1)}$.  Hence, there exists a vertex $y$ adjacent to $x$ (and $y$ is not in the path  $\mathcal{P}(v,x)$ if $v\neq x$) such that $B_x(y)$ is a Perron branch  at $x$ in  $T^{(1)}$. Thus,
$$B_x(y)\subseteq B_v(y) \mbox{ and }\rho(M_x^{(1)})=\rho(M_x^{(1)}(y)).$$
Note that, $\widehat{M}_v$ is a block diagonal matrix and $M_v(y)$ is one of its blocks. Thus, $\rho(\widehat{M}_v) \geq \rho(M_v(y))$. Hence,  using  Lemma~\ref{lem:big-branch-sr-lt}, Equations~\eqref{eqn:perr} and~\eqref{eqn:ch-edge-1}, we have               
$$\frac{1}{\kappa(T)}> \rho(\widehat{M}_v) \geq \rho(M_v(y)) \geq \rho(M_x(y))\geq \rho(M_x^{(1)}(y))=\rho(M_x^{(1)})
                               =\frac{1}{\mu(T^{(1)})}
                                =\frac{1}{\mu(T)}.$$
                               
$\underline{\textbf{Subcase 2.2:}}$ Let $T^{(1)}$ have a characteristic edge $\hat{e}$ between the vertices $x$ and $y$.   Thus, Equation~\eqref{eqn:ch-edge-ineq} is valid. 

Let $ e\neq \hat{e}$.  Without loss of generality, let $x,y\in B_u(v)$  and $y$  not be in the path  $\mathcal{P}(v,x)$ if $v\neq x$. Here, $B_x(y)\subseteq B_v(y)$.                       
 Using  Lemma~\ref{lem:big-branch-sr-lt}, Equations~\eqref{eqn:perr},~\eqref{eqn:ch-edge-ineq}  and~\eqref{eqn:ch-edge-1}, we have   
 $$\frac{1}{\kappa(T)}> \rho(\widehat{M}_v) \geq \rho(M_v(y))
                               \geq \rho(M_x(y))
                               \geq \rho(M_x^{(1)}(y))
           >\rho(M_x^{(1)}(y) - \gamma  (1/\theta) J)
                               =\frac{1}{\mu(T^{(1)})}
                                =\frac{1}{\mu(T)}.$$   
                                                            
Let $e = \hat{e}$.  Without loss of generality, let us assume that $u=x$  and $v=y$. Thus, Equation~\eqref{eqn:ch-edge-ineq} can be rewritten as
\begin{equation}\label{eqn:ch-edge-ineq11}
\frac{1}{\mu(T^{(1)})}=\rho(M_u^{(1)}(v) - \gamma (1/\theta) J) = \rho(M_v^{(1)}(u) - (1-\gamma)  (1/\theta) J) \mbox{ for some }0< \gamma < 1.
\end{equation}     
Using Equations~\eqref{eqn:f-g-h} and~\eqref{eqn:ch-edge-ineq11}, we have
\begin{align*}
f(\gamma)= \rho(M_u(v)- \gamma( J\otimes [W(e)^{-1}]))\geq \rho(M_u^{(1)}(v) - \gamma (1/\theta) J)=\frac{1}{\mu(T^{(1)})}.
\end{align*}
Similarly, $\ds g(\gamma) \geq \frac{1}{\mu(T^{(1)})}$. Therefore,
$\ds h(\gamma)=\min\{f(\gamma),g(\gamma)\} \geq \frac{1}{\mu(T^{(1)})}$.  Using Equation~\eqref{eqn:h-k}, we have
$$\frac{1}{\kappa(T)}=h(\nu)= \max_{0\leq t\leq 1} h(t) \geq  h(\gamma) \geq \frac{1}{\mu(T^{(1)})}=\frac{1}{\mu(T)}.$$
This completes the proof.
\end{proof}

\begin{figure}[ht]
 \begin{subfigure}{0.45\textwidth}

\centering
\begin{tikzpicture} [scale=1]
\Vertex[x=-2,y=1,label=$v_1$]{A}
\Vertex[x=-2,y=-1,label=$v_2$]{B}
\Vertex[x=0,y=0,label=$v_3$]{C}
\Vertex[x=2,y=0,label=$v_4$]{D}\Vertex[x=4,y=1,label=$v_5$]{E}
\Vertex[x=4,y=-1,label=$v_6$]{F}

\Edge[label=$e_1$,position=above](A)(C)
\Edge[label=$e_2$,position=above](B)(C)
\Edge[label=$e_3$,position=above](C)(D)
\Edge[label=$e_4$,position=above](E)(D)
\Edge[label=$e_5$,position=above](F)(D)
\end{tikzpicture} 

    \caption{}
    \label{fig:1}
  \end{subfigure} \hspace{7mm}
   \begin{subfigure}{0.4\textwidth}
   \vspace*{.7cm}  
   \centering
\begin{tikzpicture}[scale=0.9]
\Vertex[x=-4,y=0,label=$v_1$]{A}
\Vertex[x=-2,y=0,label=$v_2$]{B}
\Vertex[x=0,y=0,label=$v_3$]{C}
\Vertex[x=2,y=0,label=$v_4$]{D}\Vertex[x=4,y=0,label=$v_5$]{E}

\Edge[label=$e_1$,position=above](A)(B)
\Edge[label=$e_2$,position=above](B)(C)
\Edge[label=$e_3$,position=above](C)(D)
\Edge[label=$e_4$,position=above](E)(D)
\end{tikzpicture} 
   \vspace*{.5cm}
    \caption{}
    \label{fig:2}
  \end{subfigure}
  \caption{}\label{fig:3}
\end{figure}

From~\cite{Kirkland}, we know that the equality is attained in Theorems~\ref{thm:lower-bound-pd} and~\ref{thm:lower-bound-lt} for trees with positive edge weights,  but in general, this may not be true. This is illustrated in the following examples.

\begin{ex}
Let $V=\{v_1,v_2,v_3,v_4,v_5, v_6\}$ and  $E=\{e_1,e_2,e_3,e_4,e_5\}$. Consider the tree $T=(V,E)$, as shown in  Figure~\eqref{fig:1} with the matrix weights 
$$\mathcal{W}=\left\{ W(e_1)=W(e_2)=\left[
\begin{array}{c c}
1 &  0 \\
0  & 10
\end{array}
\right], W(e_3)=\left[
\begin{array}{c c}
10 &  0 \\
0  & 10
\end{array}
\right], W(e_4)=W(e_5)=\left[
\begin{array}{c c}
10 &  0 \\
0  & 1
\end{array}
\right]\right\}.$$
Let 
$$\mathcal{W}^{(1)}=\left\{ W(e_1)=W(e_2)=1, W(e_3)= 10, W(e_4)=W(e_5)=10\right\},$$  $$\mathcal{W}^{(2)}=\left\{ W(e_1)=W(e_2)=10, W(e_3)= 10, W(e_4)=W(e_5)=1\right\}.$$ 
Let    $T^{(1)}=(T, \mathcal{W}^{(1)} )$ and $T^{(2)}=(T, \mathcal{W}^{(2)})$.  Then,  
$v_3$ is the characteristic vertex of $T^{(1)}$ with  $\mu(T^{(1)})=1$, while $v_4$ is the characteristic vertex of $T^{(2)}$ with $\mu(T^{(2)})=1$. Whereas, $e_3$ is the characteristic-like edge of $T$ with $\mu(T)=1$. Thus,
$$\frac{1}{\kappa(T)}=\rho(M_{v_3}(v_4)-0.5 (J\otimes[W(e_3)^{-1}] ))=\rho(M_{v_4}(v_3)-0.5 (J\otimes[W(e_3)^{-1}] ))=1.104741,$$
and  hence $\kappa(T)< \mu(T).$  
\end{ex}

\begin{ex}
Let $V=\{v_1,v_2,v_3,v_4,v_5\}$ and  $E=\{e_1,e_2,e_3,e_4\}$. Consider the tree $T=(V,E)$, as shown in  Figure~\eqref{fig:2} with the matrix weights 
$$\mathcal{W}=\left\{ W(e_1)=W(e_2)=\left[
\begin{array}{c c}
10 &  0 \\
0  & 1
\end{array}
\right], W(e_3)= W(e_4)=\left[
\begin{array}{c c}
1 &  0 \\
0  & 10
\end{array}
\right]\right\}.$$
Let 
$$\mathcal{W}^{(1)}=\left\{ W(e_1)=W(e_2)=10, W(e_3)= W(e_4)=1\right\},$$  $$\mathcal{W}^{(2)}=\left\{ W(e_1)=W(e_2)=1, W(e_3)= W(e_4)=10\right\}.$$ 
Let    $T^{(1)}=(T, \mathcal{W}^{(1)} )$ and $T^{(2)}=(T, \mathcal{W}^{(2)})$. Then,
$e_3$ is the characteristic edge of $T^{(1)}$ with  $\mu(T^{(1)})=0.58963$, while $e_2$ is the characteristic edge of $T^{(2)}$ with $\mu(T^{(2)})=0.58963$. Whereas, $v_3$ is the characteristic-like vertex of $T$ with $\mu(T)=0.58963$. Thus,
$\dfrac{1}{\kappa(T)}=\rho(M_{v_3})=2.618034,$
and  hence $\kappa(T)< \mu(T).$ 
\end{ex}

\section{Conclusion}\label{sec:conclu}
In this manuscript, we have studied the Laplacian matrices for trees with matrix weights on their edges. We consider the principal submatrix $L_v$ of the Laplacian matrices for trees with matrix weights on their edges. We first computed the determinant of $L_v$ and proved that $L_v$ is an invertible matrix if and only if the edge weights are nonsingular matrices. Then, we found the inverse of $L_v$ and defined the bottleneck matrix for a branch of a tree with nonsingular matrix edge weights. In this case, we defined Perron values and Perron branches whenever the eigenvalues of $L_v$  are nonnegative. Using  $L_v^{-1}$, we obtained the Moore-Penrose inverse of the Laplacian matrix $L$.  We then considered trees with the following classes of matrix edge weights:
\begin{enumerate}
\item[$1.$] positive definite matrix weights, 

\item[$2.$] lower (or upper) triangular matrix weights with positive diagonal entries.
\end{enumerate}
For trees with the above classes of matrix edge weights, we found that the eigenvalues of $L_v$  are nonnegative, and we have shown the existence of vertices satisfying properties analogous to the properties of characteristic
vertices of trees with positive edge weights in terms of Perron values and Perron branches.  We call such vertices characteristic-like vertices. 

For trees with positive edge weights, it is known that the algebraic connectivity (first non-zero eigenvalue of the Laplacian matrix)  can be expressed in terms of the Perron value. We attempted to find a similar relation for trees with the above class of matrix edge weights. However, here we obtained an inequality instead and hence provided a lower bound for the first non-zero eigenvalue of the Laplacian matrix.

\vspace*{.5cm}

\noindent{ \textbf{\Large Acknowledgements}}: 
We take this opportunity to  thank the anonymous referees for their critical reading of the manuscript and for various suggestions which greatly improve the presentation
of this paper. We also like to thank Prof. R.B. Bapat for his helpful comments and suggestions.  We sincerely thank N. Nilakantan and D. Sheetal for a careful reading of the manuscript. A few of the results of this manuscript are part of the first author's BS-MS major project at IISER Thiruvananthapuram. We acknowledge IISER Thiruvananthapuram for the support provided. The second author acknowledges the Department of Science and Technology, Government of India, for support through the project  MATRICS (MTR/2017/000458).

{\small

}


\begin{thebibliography}{20}

\bibitem{Abreu} N. Abreu, L. Markenzon, L. Lee and O. Rojo. On trees with maximum algebraic connectivity. Appl. Anal. Discrete Math. {\bf{10}}(1), 88-101, 2016.

\bibitem{Atik} F. Atik, M.R. Kannan and R.B. Bapat.  On distance and Laplacian matrices of trees with matrix weights.  Linear Multilinear Algebra. {\bf{69}} (14), 2607-2619, 2021.

% 2019. https://doi.org/10.1080/03081087.2019.1687642 
%
%(2021), no. 14, 2607-2619.

\bibitem{Atik1}F. Atik, R.B. Bapat and   M.R. Kannan. Resistance matrices of graphs with matrix weights. Linear Algebra Appl. {\bf{571}}, 41-57, 2019.

\bibitem{Bapat}  R.B. Bapat.  Graphs and matrices. Second Edition,  Hindustan Book Agency, 
New Delhi, 2014.

\bibitem{Bapat1}R.B. Bapat. Determinant of the distance matrix of a tree with matrix weights. Linear Algebra Appl. {\bf{416}} (1),  2-7, 2006.

\bibitem{Bapat2} R. Balaji, R.B. Bapat and S. Goel. Distance matrices perturbed by Laplacians. Appl. Math. {\bf{65}} (5),  599–607, 2020.

\bibitem{Ben} A. Ben-Israel and T.N.E. Greville, Generalized Inverses: Theory and Applications, Second Edition,  Springer, New York, 2003.

\bibitem{Bhatia} R. Bhatia. Matrix analysis. Graduate Texts in Mathematics, 169. Springer-Verlag, New York, 1997.

\bibitem{Campbell}S.L. Campbell and C.D. Meyer, Generalized Inverses of Linear Transformations, Pitman, London, 1979.


\bibitem{Fiedler1}  M.  Fiedler. Algebraic connectivity of graphs. Czechoslovak Math. J. {\bf{23}} (98):  298-305, 1973.

\bibitem{Fiedler2}  M.  Fiedler. A property of eigenvectors of nonnegative symmetric matrices and its application to graph theory. Czechoslovak Math. J. {\bf{25}} (100): 619-633, 1975.

\bibitem{Horn} R.A. Horn and  C.R. Johnson. Matrix analysis  (Corrected reprint of the 1985 original). Cambridge University Press, Cambridge, 1990. 

\bibitem{Grone1}R. Grone and R. Merris. Algebraic connectivity of trees.
Czechoslovak Math. J. {\bf{37} (112)} (4),  660-670, 1987.

\bibitem{Grone2} R. Grone and R. Merris. Ordering trees by algebraic connectivity. Graphs Combin. {\bf{6}} (3),  229-2376, 1990.

\bibitem{Henderson} H. V. Henderson  and S. R. Searle (1981): The vec-permutation matrix, the vec operator and Kronecker products: a review, Linear Multilinear Algebra. {\bf{9}} (4), 271-288.

\bibitem{Kirkland} S. Kirkland,  M. Neumann and B.L. Shader. Characteristic vertices of weighted trees via Perron values. Linear Multilinear Algebra. {\bf{40}} (4), 311-325, 1996.

\bibitem{Kirkland1} S. Kirkland and  M. Neumann.  Algebraic connectivity of weighted trees under perturbation. Linear Multilinear Algebra. {\bf{42}} (3)  187–203, 1997.

\bibitem{Lal}  A.K.  Lal, K.L. Patra and B.K. Sahoo. Algebraic connectivity of connected graphs with fixed number of pendant vertices. Graphs Combin. {\bf{27}} (2):  215-229, 2011.

\bibitem{Merris} R. Merris.  Characteristic vertices of trees. Linear Multilinear Algebra. {\bf{22}} (2), 115-131, 1987.

\bibitem{Minc} H. Minc.  Nonnegative matrices. John Wiley \& Sons, Inc., New York, 1988.

\bibitem{Molitierno} J.J. Molitierno. Applications of combinatorial matrix theory to Laplacian matrices of graphs. Discrete Mathematics and its Applications. CRC Press, Boca Raton, FL, 2012.

\bibitem{Patra1}K.L. Patra.  Maximizing the distance between center, centroid and characteristic set of a tree. Linear Multilinear Algebra. {\bf{55}} (4), 381–397, 2007.

\bibitem{Patra2}K.L. Patra and A.K. Lal.  The effect on the algebraic connectivity of a tree by grafting or collapsing of edges. Linear Algebra Appl. {\bf{428}} (4) 855-864,  2008.

\bibitem{Wang} X. Wang and S. Tan. Ordering trees by algebraic connectivity. Linear Algebra Appl.  {\bf{436}} (9) 3684-3691, 2012. 

\bibitem{Zhang} X. Zhang. Ordering trees with algebraic connectivity and diameter. Linear Algebra Appl. {\bf{427}} (2-3)   301-312, 2007.

\bibitem{Zhou}H. Zhou and Q. Ding. The product distance matrix of a tree with matrix weights on its arcs. Linear Algebra Appl. {\bf{499}}, 90-98, 2016.

\end{thebibliography}
\end{document}